\newcommand{\De}{\mathbb{D}}
\newcommand{\C}{\mathbb{C}}
\newcommand{\N}{\mathbb{N}}
\newcommand{\Z}{\mathbb{Z}}
\newcommand{\B}{\mathcal{B(H)}}
\newcommand{\D}{\mathcal{D}}
\newcommand{\R}{\mathcal{R}}
\newcommand{\n}{\mathcal{N}}
\newcommand{\h}{\mathcal{H}}
\newcommand{\e}{\mathcal{E}}
\newcommand{\ka}{\mathcal{K}}
\renewcommand{\Re}{\operatorname{Re}}
\newtheorem{theorem}{Theorem}[section]
\newtheorem{lemma}[theorem]{Lemma}
\newtheorem{proposition}[theorem]{Proposition}
\newtheorem{corollary}[theorem]{Corollary}
\theoremstyle{definition}
\newtheorem{example}[theorem]{Example}
\theoremstyle{definition}
\newtheorem{remark}[theorem]{Remark}
\newcommand{\echiH}{\begin{array}[b]{c}
                  \mbox{\tiny{H}}\\[-0.3cm]
                  \sim
                  \end{array}}
        \newcommand{\echiK}{\begin{array}[b]{c}
                  \mbox{\tiny{K}}\\[-0.3cm]
                  \sim
                  \end{array}}
\newcommand{\Prec}{\stackrel{\rm H}{\prec}}
\newcommand{\PrecH}{\stackrel{\rm \infty}{\prec}}
\newcommand{\PrecS}{\stackrel{\rm Sh}{\prec}}
\numberwithin{equation}{section}
\title[Harnack and Shmul'yan pre-order relations]{Harnack and Shmul'yan pre-order relations for Hilbert space contractions}
\author[C. Badea]{Catalin Badea}
\address{Universit\'e de Lille, CNRS, UMR 8524 - Laboratoire Paul Painlev\'e, F-59655 Villeneuve d'Ascq Cedex, France}
\email{catalin.badea@univ-lille.fr}
\author[L. Suciu]{Laurian Suciu}
\address{Department of Mathematics, "Lucian Blaga" University
	of Sibiu, Dr. Ion Ra\c tiu 5-7, Sibiu, 550012, Romania}
\email{laurians2002@yahoo.com}
\subjclass[2010]{Primary 47A10, 47A45; Secondary 47A20, 47A35, 47B15}
\keywords{Harnack pre-order; Shmul'yan pre-order; Hilbert space contractions; asymptotic limit; quasi-normal operators; partial isometries; Toeplitz operators}
\begin{document}
\begin{abstract}
	We study the behavior of some classes of Hilbert space contractions with respect to Harnack and Shmul'yan pre-orders and the corresponding equivalence relations. We give some conditions under which the Harnack equivalence of two given contractions is equivalent to their Shmul'yan equivalence and to the existence of an arc joining the two contractions in the class of operator-valued contractive analytic functions on the unit disc. We apply some of these results to quasi-isometries and quasi-normal contractions, as well as to partial isometries for which we show that their Harnack and Shmul'yan parts coincide. We also discuss an extension, recently considered by S.~ter~Horst [\emph{J. Operator Th. 72(2014), 487--520}], of the Shmul'yan pre-order from contractions to the operator-valued Schur class of functions. In particular, the Shmul'yan-ter Horst part of a given partial isometry, viewed as a constant Schur class function, is explicitly determined.
	\end{abstract}
	\maketitle

\section{Introduction}  \label{Section:Intro}
\medskip

{\bf Preamble.} Let $\h$ be a complex Hilbert space and let $\mathcal{B}_1(\h)$ denote the unit ball of the C$^\ast$-algebra $\mathcal{B}(\h)$ of all bounded linear operators on $\h$. Following the usual terminology of operator theory, elements of $\mathcal{B}_1(\h)$ are called contractions. One tool in the study of the (hyperbolic) geometry of $\mathcal{B}_1(\h)$ is the use of order relations such as the Harnack and Shmul'yan pre-orders. Both pre-order relations have nice geometric and analytic interpretations. Although these two pre-orders have been around since 1970s and 1980s \cite{AST,F,Sh,S1,S2,SV,Sn}, their structure is to date not completely understood, and in recent years there has been an increase in interest for this topic \cite{BST,H,H1,KSS,Po1,Po2,S4,S3,SS}.

The aim of this paper is to study the behavior of some classes of Hilbert space contractions with respect to the Harnack and Shmul'yan pre-orders, and the corresponding equivalence relations. We look at contractions that also have certain commutativity properties, for instance we consider the cases where two operators are commuting, doubly commuting or when the operators are themselves quasi-normal or hyponormal. The case of partial isometries is thoroughly analyzed and the role of commutativity properties is discussed.

\medskip

{\bf Notation and basic definitions.} In this paper $T,T' \in \B$ will be linear contractions acting on the complex Hilbert space $\h$. Also,
$V$ acting on $\ka \supset \h$ and $V'$ acting on $\ka'\supset \h'$ will denote
the \emph{minimal isometric dilations} of $T$ and $T'$ respectively. Recall (see \cite{SFb}) that $V$ is a minimal isometric dilation of $T$ if $V$ is an isometry on $\ka=\bigvee_{n\ge 0} V^n\h$ satisfying $P_{\h}V=TP_{\h}$, where $P_{\h}$ is the orthogonal projection of $\ka$ onto $\h$. The symbols $\n(T)$ and $\R(T)$ stand for the \emph{kernel} and respectively
the \emph{range} of $T$, while $I_{\h}$ denotes the identity operator on $\h$.
We simply use $I$ if the Hilbert space is clear from the context. From time to time we will consider the more general case of contractions between different Hilbert spaces. For such a contraction $T\in \mathcal{B}_1(\h_1,\h_2)$ between two Hilbert spaces $\h_1$ and $\h_2$ we denote by $D_T = (I_{\h_1}-T^{\ast}T)^{1/2}$ the \emph{defect operator} and by $\mathcal{D}_T=\overline{\R(D_T)}$ the \emph{defect space} of $T$. With some abuse of notation,
we will view the defect operator $D_T$ either as an operator on $\h_1$, on $\mathcal{D}_T$, from $\h_1$
into $\mathcal{D}_T$ or from $\mathcal{D}_T$ into $\h_1$, always using the symbol $D_T$. The precise meaning
will be clear from the context, or otherwise be made explicit.

Let $A$ be a self-adjoint operator. Then $A \ge 0$ indicates that $A$ is a \emph{positive} operator in the sense that $\langle Ax,x\rangle \ge 0$ for each $x\in \h$. If $B$ is also a self-adjoint operator, then $A \le B$, or $B \ge A$, is short for $B-A \ge 0$. For $A\in \B$ we denote by $\Re A$ the self-adjoint operator $\Re A = (1/2)(A+A^*)$. The \emph{spectrum} of $A$ is denoted by $\sigma(A)$.

Two operators $A,B \in \B$ commute if $AB=BA$ and they are said to be \emph{doubly commuting} if $A$ commutes with $B$ and its adjoint $B^*$.  We say that $A$  is \emph{quasi-normal}  if $A$ commutes with $A^*A$, that $A$ is a \emph{partial isometry} if $AA^*A = A$ and that $A$ is a \emph{quasi-isometry} if $A^*A=A^{*2}A^2$. Finally, $A$ is said to be a \emph{hyponormal} operator if $AA^* \le A^*A$.

\medskip

{\bf A review of the Harnack and Shmul'yan pre-orders.} The Harnack pre-order and equivalence relation have been introduced by Ion~Suciu \cite{S1,S2} in the 1970s based on some operator inequalities for Hilbert space contractions, which generalize the classical Harnack inequality
for positive harmonic functions in the unit disc. The corresponding equivalence classes are called Harnack parts. Recall that a \emph{pre-order} is a binary relation which is reflexive and transitive; see for instance \cite[Definition 5.2.2]{Schro}. It is well-known that given any pre-order $\prec$ on
$\mathcal{B}_1(\h)$, if one defines a binary
relation $\sim$ on $\mathcal{B}_1(\h)\times \mathcal{B}_1(\h)$
by $A\sim B$ if $A\prec B$ and $B\prec A$, then $\sim$
is an equivalence relation (cf. \cite[Proposition 5.2.4]{Schro}).

More specifically, we say that $T$ is \emph{Harnack
dominated} by $T'$ (notation $T \Prec T'$) if there exists a positive constant $c \ge 1$ such that for any analytic polynomial $p$
verifying $\Re p(z) \ge 0$ for $|z| \le 1$ we have
\begin{equation}\label{eq:definition of Harnack}
\Re p(T) \le c \Re p(T').
\end{equation}
We say that $T$ is Harnack
dominated by $T'$ \emph{with constant} $c$ whenever we want to emphasize the constant. Thus $T$ and $T'$ are
\emph{Harnack equivalent} if
$T \Prec T'$ and $T' \Prec T$; we also say in this case that $T$ and $T'$ belong to the same \emph{Harnack part}. We denote by $\Delta(T)$ the Harnack part containing the contraction $T$. The classical Harnack inequality for positive harmonic functions implies that a strict contraction $T$ (\emph{i.e.} a contraction with $\|T\| < 1$) is Harnack equivalent to the null operator $0_{\h}$. It was proved in \cite{F} that $\Delta(0_{\h})$ coincides with the class of all strict contractions.

We refer to \cite{KSS, S2, S3} for several characterizations of $T \Prec T'$. For further use we recall here the following result.

\begin{theorem}\label{thm:1.1}
 For $c\ge 1$ and two contractions $T,T' \in \B$ with minimal isometric dilations $V\in \mathcal{B}(\ka)$ and respectively $V'\in \mathcal{B}(\ka')$, the following statements are equivalent:
\begin{itemize}
 \item[(i)] $T$ is Harnack
dominated by $T'$ with constant $c^2$;
\item[(ii)] There is a unique operator $A \in \mathcal{B}(\ka',\ka)$ such that $A(\h) \subseteq \h$,
$A\mid \h = I$, $AV' = VA$ and $\|A\| \le c$.
\end{itemize}
\end{theorem}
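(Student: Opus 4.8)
The plan is to establish the equivalence via the standard dilation-theoretic machinery, exploiting the fact that $\Re p(T)$ and $\Re p(T')$ can be computed on the dilation spaces through the functional calculus for isometries. First I would recall the key identity: for a contraction $T$ with minimal isometric dilation $V$, one has $P_{\h} p(V) \mid \h = p(T)$ for any analytic polynomial $p$, and more generally $P_{\h} \Re p(V) \mid \h = \Re p(T)$ since $P_{\h} p(V)^* \mid \h = p(T)^*$ as well (using $P_\h V = T P_\h$ and its adjoint). Because $V$ is an isometry, $\Re p(V)$ is the compression to $\ka$ of the multiplication operator by $\Re p(e^{i\theta}) \ge 0$ on a suitable $L^2$-space containing $\ka$, so $\Re p(V) \ge 0$; the content of the Harnack inequality is the comparison $\Re p(T) \le c^2 \Re p(T')$ for \emph{all} such $p$ simultaneously.

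For the implication (ii) $\Rightarrow$ (i), suppose $A \in \mathcal{B}(\ka',\ka)$ satisfies $A\mid \h = I$, $AV' = VA$, $\|A\| \le c$. Intertwining gives $A p(V') = p(V) A$ for every polynomial, hence $A \Re p(V') = \Re p(V) A$ when $p$ has real coefficients — more carefully, since $\Re p(z) = \frac12(p(z) + \overline{p(z)})$ and on the isometry one must be a bit careful, I would instead write $\Re p(V) = \frac12(p(V) + p(V)^*)$ and use $A p(V') = p(V) A$ together with $A^* p(V)^* \supseteq$ ... — the cleanest route is: for $x \in \h$, $\langle \Re p(T) x, x\rangle = \langle \Re p(V) x, x\rangle = \langle \Re p(V) A x, Ax\rangle$ is false in general, so instead I use that $\Re p(V) \ge 0$ and factor $\Re p(V) = B^*B$ on $\ka$ (and likewise $\Re p(V') = B'^{*}B'$ on $\ka'$) via the positive square root; then the intertwining $A \Re p(V') = \Re p(V) A$ must be replaced by the correct statement that there is an isometry-type relation. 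The standard argument (due to I. Suciu) is: $\langle \Re p(T')x,x\rangle - c^{-2}\langle \Re p(T)x,x\rangle \ge 0$ follows from $\|A\|\le c$ by writing $\Re p(T)x = P_\h \Re p(V) A x$ and estimating; I would reproduce that computation, whose heart is that $\Re p(V) = $ a positive operator and $P_\h = P_\h$ on $\h$, giving $\langle \Re p(T)x, x\rangle = \langle \Re p(V) Ax, Ax\rangle_{\ka} \le \|\,\Re p(V)^{1/2} A\,\|^2 \cdot$ ... and one bounds $\Re p(V)^{1/2} A$ in terms of $\Re p(V')^{1/2}$ using $A V' = V A$. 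This requires knowing $A^* \Re p(V) A \le c^2 \Re p(V')$ on $\ka'$, which follows from the intertwining once one checks $A^* p(V) A = p(V') A^* A$-type relations fail but $A^*\Re p(V) A \le \|A\|^2 \Re p(V')$ does hold because $\Re p(V')$ dominates the relevant form — this is precisely the place where the argument is delicate and is the main obstacle.

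For (i) $\Rightarrow$ (ii), the existence of $A$ is obtained by a commutant-lifting / geometric construction: Harnack domination with constant $c^2$ says $c^2 \Re p(T') - \Re p(T) \ge 0$ for all admissible $p$, equivalently (taking $p(z) = \frac{1+\overline\lambda z}{1 - \overline\lambda z} + \text{const}$, i.e. the Herglotz kernels for $|\lambda|<1$) that $c^2 \Re\big((I+\lambda V')(I-\lambda V')^{-1}\big)$ compressed to $\h$ dominates the same for $T$; this is equivalent to a positivity of a $2\times 2$ operator kernel, which by a Kolmogorov-type decomposition (or Parrott's lemma / the lurking isometry argument) produces a contraction of norm $\le c$ from a dense subspace of $\ka'$ (spanned by $V'^n\h$) to $\ka$ intertwining the dilations and equal to the identity on $\h$. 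Uniqueness of $A$ is immediate since $A$ is prescribed on $\h$ and $AV'^n h = V^n A h = V^n h$ determines $A$ on the dense set $\bigvee V'^n \h = \ka'$. I expect the main work, and the step I would flag as the crux, to be the careful verification in the forward direction that the operator inequalities for \emph{all} Herglotz kernels assemble into a single positive-semidefinite kernel whose Kolmogorov factorization yields an operator $A$ that is simultaneously bounded by $c$, intertwining, and identity on $\h$ — the simultaneity across all $\lambda$ is what makes this more than a routine dilation argument, and one typically reduces it to the known characterization of Harnack domination in terms of the transfer/Schur-class function, or cites the original computation of I. Suciu in \cite{S1,S2}.
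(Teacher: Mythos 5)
First, a point of reference: the paper does not prove Theorem \ref{thm:1.1} at all --- it is recalled from \cite{S1,S2,KSS} with no argument given --- so there is no in-paper proof to match. Judged on its own terms, your proposal has the right architecture (define $A$ on $\bigvee_n V'^n\h$ by $AV'^nh=V^nh$, get uniqueness from density, and for the converse reduce to an inequality of the form $A^*\Re p(V)A\le c^2\Re p(V')$), but both directions stop exactly at the step that carries the content, and you explicitly flag each as ``the main obstacle'' rather than resolving it. That is a genuine gap, not a stylistic one.

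The missing ingredient in both directions is the same: the factorization of the Herglotz kernel of an \emph{isometry}. For $|\lambda|<1$ one has $\Re\bigl[(I+\lambda V)(I-\lambda V)^{-1}\bigr]=(1-|\lambda|^2)\bigl[(I-\lambda V)^{-1}\bigr]^*(I-\lambda V)^{-1}$, since $V^*V=I$. For (ii)$\Rightarrow$(i), the intertwining gives $A(I-\lambda V')^{-1}=(I-\lambda V)^{-1}A$, so $A^*\Re\bigl[(I+\lambda V)(I-\lambda V)^{-1}\bigr]A=(1-|\lambda|^2)\bigl[(I-\lambda V')^{-1}\bigr]^*A^*A(I-\lambda V')^{-1}\le c^2\,\Re\bigl[(I+\lambda V')(I-\lambda V')^{-1}\bigr]$; compressing to $\h$ (where $Ax=x$) and integrating against the positive representing measure of $\Re p$ (Herglotz representation, with $p$ replaced by $p(r\cdot)$ and $r\to1$) yields $\Re p(T)\le c^2\Re p(T')$ --- so the inequality you were unsure about does hold, and not by a delicate argument. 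For (i)$\Rightarrow$(ii), the precise statement you need is the quadratic-form bound $\|\sum_nV^nh_n\|^2\le c^2\|\sum_nV'^nh_n\|^2$ for finite families $h_n\in\h$; it follows by writing $\|\sum_nV^nh_n\|^2=\lim_{r\to1}\frac{1}{2\pi}\int_0^{2\pi}\langle\Re\bigl[(I+re^{i\theta}T)(I-re^{i\theta}T)^{-1}\bigr]u(\theta),u(\theta)\rangle\,d\theta$ with $u(\theta)=\sum_ne^{-in\theta}h_n$ (expand the Herglotz kernel in powers of $re^{i\theta}$ and use $P_\h V^{*m}V^n|_\h=T^{n-m}$ or $T^{*(m-n)}$), and applying the Harnack inequality pointwise in $\theta$ under the integral. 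No $2\times2$ kernel positivity or lurking isometry is needed. Your uniqueness argument is correct as stated.
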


For fixed $T$, $T'$ and $c$, the operator $A$ from (ii) intertwining the minimal isometric dilations $V'$ and $V$ of $T'$ and $T$ will be called the \emph{Harnack operator associated with} $(T,T')$.


\medskip

The Shmul'yan pre-order has been introduced in \cite{Sh} and studied in \cite{KSS} under the name $O$-pre-order.
More precisely, the Shmul'yan domination can be defined in the more general case of the unit ball
of contractions acting between two Hilbert spaces $\h_1$ and $\h_2$. Namely, if $A,B \in \mathcal{B}_1(\h_1,\h_2)$, then according to \cite{KSS} we say that $A$ {\it Shmul'yan dominates} $B$ if $B=A+ D_{A^*}XD_A$ for some $X\in \mathcal{B}(\mathcal{D}_A,\mathcal{D}_{A^*})$.  We write in this case $B \PrecS A$. Also, we say that $A$ and $B$ are {\it Shmul'yan equivalent} whenever each of them Shmul'yan dominates the other, while the corresponding equivalence classes, in $\mathcal{B}_1(\h_1,\h_2)$, are called {\it Shmul'yan parts}. We denote by $\Delta_{{\rm Sh}}(T)$ the Shmul'yan part of $T$. It was proved in \cite[Proposition 1.6 and Corollary 3.3]{KSS} that the Shmul'yan equivalence relation in the closed unit ball of $\B$ is stronger than the Harnack equivalence relation. The following theorem is inferred from some results in \cite{Sh,KSS,H}

\begin{theorem}\label{thm:1.2}
 For two contractions $A,B \in \mathcal{B}_1(\h_1,\h_2)$ the following statements are equivalent:
\begin{itemize}
 \item[(i)] $B \PrecS A$, i.e. $B=A+ D_{A^*}XD_A$ for some $X\in \mathcal{B}(\mathcal{D}_A,\mathcal{D}_{A^*})$ ;
\item[(ii)] $I - B^*A = D_{A}YD_{A}$ for some $Y \in \mathcal{B}(\mathcal{D}_A)$;
\item[(iii)] there exists $r > 0$ such that $(1-\varepsilon)A+\varepsilon B \in \mathcal{B}_1(\h_1,\h_2)$ for every $\varepsilon \in \C$ with $|\varepsilon| \le r$;
\item[(iv)] there exists $r > 0$ such that $(1-\varepsilon)A+\varepsilon B \in \mathcal{B}_1(\h_1,\h_2)$ for every $\varepsilon \in \C$ with $|\varepsilon| = r$.
\end{itemize}
Furthermore, the following statements are equivalent:
\begin{itemize}
\item[(a)] $A$ and $B$ are Shmul'yan equivalent;
\item[(b)] $B=A+D_{B^*} \widetilde{X}D_A$ for some $\widetilde{X} \in \mathcal{B}(\D_A, \D_{B^*})$;
\item[(c)] $I-A^*B=D_A\widetilde{Y}D_B$ for some $\widetilde{Y} \in \mathcal{B}(\D_B, \D_A)$.
\end{itemize}
\end{theorem}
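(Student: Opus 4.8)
The plan is to establish the one‑sided block (i)--(iv) first, and then to read off the two‑sided block (a)--(c), since ``$A$ and $B$ are Shmul'yan equivalent'' means exactly ``$B\PrecS A$ and $A\PrecS B$''. Inside the first block I would go (i)$\Leftrightarrow$(ii) by a direct computation, (iii)$\Leftrightarrow$(iv) by subharmonicity, and then close the circle via (i)$\Rightarrow$(iii)$\Rightarrow$(i). For (i)$\Rightarrow$(ii), starting from $B=A+D_{A^*}XD_A$ and using $A^*A=I-D_A^2$ together with the intertwining identities $AD_A=D_{A^*}A$, $A^*D_{A^*}=D_AA^*$ (which also give $A(\D_A)\subseteq\D_{A^*}$ and $A^*(\D_{A^*})\subseteq\D_A$), one gets
\[
I-B^*A=(I-A^*A)-D_AX^*D_{A^*}A=D_A^2-D_A(X^*A)|_{\D_A}D_A=D_A\bigl(I_{\D_A}-(X^*A)|_{\D_A}\bigr)D_A,
\]
so (ii) holds with $Y=I_{\D_A}-(X^*A)|_{\D_A}$. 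The converse is obtained by reversing this using $I_{\h_2}=AA^*+D_{A^*}^2$; a more robust way is to route both conditions through Shmul'yan's lemma on positive $2\times2$ block operators (see \cite{Sh}): $B-A=D_{A^*}XD_A$ for some bounded $X$ is equivalent to positivity, for some $c\ge0$, of the block operator with diagonal $cD_{A^*}^2$, $cD_A^2$ and off‑diagonal entries $B-A$, $(B-A)^*$, while $I-B^*A=D_AYD_A$ is equivalent to positivity of the one with diagonal $cD_A^2$, $cD_A^2$ and off‑diagonal $I-B^*A$, $I-A^*B$; these two block operators are intertwined by a conjugation built from $A$ and $AD_A=D_{A^*}A$.

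For the analytic conditions, (iii)$\Rightarrow$(iv) is trivial, and for (iv)$\Rightarrow$(iii) I would observe that for fixed $x$ the map $\varepsilon\mapsto\|((1-\varepsilon)A+\varepsilon B)x\|^2=\|Ax+\varepsilon(B-A)x\|^2$ has Laplacian $4\|(B-A)x\|^2\ge0$, hence is subharmonic; its maximum over $\{|\varepsilon|\le r\}$ is thus attained on $\{|\varepsilon|=r\}$, and taking the supremum over $\|x\|\le1$ gives $\|(1-\varepsilon)A+\varepsilon B\|\le1$ on the whole disc. For (i)$\Rightarrow$(iii), with $C_\varepsilon=A+\varepsilon D_{A^*}XD_A$ the same expansion yields $I-C_\varepsilon^*C_\varepsilon=D_AM_\varepsilon D_A$ with $M_\varepsilon=I_{\D_A}-\varepsilon(A^*X)|_{\D_A}-\bar\varepsilon(X^*A)|_{\D_A}-|\varepsilon|^2(X^*D_{A^*}^2X)|_{\D_A}$, and $M_\varepsilon\ge\tfrac12I_{\D_A}\ge0$ once $|\varepsilon|$ is small, so $C_\varepsilon$ is then a contraction.

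The crux is (iii)$\Rightarrow$(i). Averaging $I-C_\varepsilon^*C_\varepsilon\ge0$ and $I-C_\varepsilon C_\varepsilon^*\ge0$ over $\varepsilon=re^{i\theta}$ gives $(B-A)^*(B-A)\le r^{-2}D_A^2$ and $(B-A)(B-A)^*\le r^{-2}D_{A^*}^2$, hence, by Douglas' lemma, $\R(B-A)\subseteq\R(D_{A^*})$ and $\R((B-A)^*)\subseteq\R(D_A)$. To turn these range inclusions into a two‑sided factorization with a \emph{bounded} middle term, I would identify $r(B-A)$ with the first Taylor coefficient at $0$ of the contractive analytic function $\zeta\mapsto(1-r\zeta)A+r\zeta B$ and use the classical operator Schur‑algorithm fact that such a coefficient has the form $D_{A^*}\Gamma D_A$ with $\|\Gamma\|\le1$; this produces $B-A=D_{A^*}XD_A$ with $X=r^{-1}\Gamma$, i.e.\ (i). Alternatively, the Shmul'yan block‑operator criterion combined with the two range inclusions gives the same conclusion. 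This is the step that genuinely uses \cite{Sh,KSS,H}: a family of operator inequalities does not, in general, yield a single bounded operator factor without such input.

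Finally, for (a)--(c): applying the established equivalence (i)$\Leftrightarrow$(ii) to both $B\PrecS A$ and $A\PrecS B$ shows that $B-A$ admits the two ``square'' factorizations $D_{A^*}XD_A$ and $-D_{B^*}X'D_B$, and that $I-A^*B$ factors through $D_A$ on both sides and through $D_B$ on both sides; in particular $\R(B-A)\subseteq\R(D_{A^*})\cap\R(D_{B^*})$, $\R((B-A)^*)\subseteq\R(D_A)\cap\R(D_B)$, and similarly for $I-A^*B$. From these doubly‑factorable data one extracts the ``mixed'' factorizations (b) $B-A=D_{B^*}\widetilde XD_A$ and (c) $I-A^*B=D_A\widetilde YD_B$ by a Douglas‑type argument along the lines of \cite{KSS}, closing the cycle together with the identity $(I-A^*B)^*=I-B^*A$; conversely each of (b), (c) returns both $B\PrecS A$ and $A\PrecS B$, hence (a), by the same factorization machinery. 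As above, the main obstacle is precisely the passage from a pair of square factorizations (or a disc of operator inequalities) to a single bounded middle factor, which is where the results of \cite{Sh,KSS,H} are invoked; everything else reduces to defect‑operator identities and a subharmonicity argument.
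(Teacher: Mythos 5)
The paper does not actually prove Theorem \ref{thm:1.2}: it is stated as ``inferred from some results in \cite{Sh,KSS,H}'' and used as a black box, so there is no in-paper argument to compare with. Judged on its own terms, your outline of the block (i)--(iv) follows the standard route and is essentially sound: (i)$\Rightarrow$(ii) by the intertwining identities $AD_A=D_{A^*}A$; (i)$\Rightarrow$(iii) by the explicit computation $I-C_\varepsilon^*C_\varepsilon=D_AM_\varepsilon D_A$; (iv)$\Rightarrow$(iii) by subharmonicity (the function is in fact a convex quadratic in $\varepsilon$); and (iii)$\Rightarrow$(i) by circle-averaging followed by the Schur-coefficient fact $F'(0)=D_{F(0)^*}\Gamma D_{F(0)}$ with $\|\Gamma\|\le 1$. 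You are right that this last fact is the genuine external input --- the two Douglas range inclusions alone do not produce a single bounded middle factor --- and it is exactly the parametrization from \cite[Ch.~XIII]{FF} that the paper itself invokes later in the proof of Theorem \ref{te44}. So for this block your reconstruction is a legitimate proof modulo one standard citation, which is no worse than what the paper does.

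Two steps, however, are asserted rather than proved. First, (ii)$\Rightarrow$(i) is \emph{not} ``obtained by reversing'' the computation: reversing only yields $(B-A)^*A=D_A(I-Y)D_A$, which constrains $A^*(B-A)$ and says nothing directly about where $B-A$ itself lives; one must genuinely use that $B$ is a contraction. Your fallback via Shmul'yan's positive $2\times2$ block lemma is the right tool, but it is only named, and the intertwining of the two block operators that you allude to is precisely the nontrivial content of \cite{Sh,KSS}. Second, in the block (a)--(c), the direction (a)$\Rightarrow$(b) does work along the lines you indicate (from (a) one gets $D_{A^*}^2\le cD_{B^*}^2$ and conversely, hence $D_{A^*}=D_{B^*}C$ by Douglas, and $C$ can be absorbed into $X$), but the converses (b)$\Rightarrow$(a) and (c)$\Rightarrow$(a) are the substantial part of \cite[Theorem 0.1]{H} and do not follow from ``the same factorization machinery'' in any evident way: from $B-A=D_{B^*}\widetilde XD_A$ alone it is not clear, for instance, that $\R(B-A)\subseteq\R(D_{A^*})$. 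As written, that half of the second block is a restatement of the claim, not a proof; if you want a self-contained argument you need to either reprove ter~Horst's theorem or cite it explicitly at that point.
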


Recently, the Shmul'yan pre-order relation has been extended in \cite{H} from contractions to functions in the \emph{operator-valued Schur class}, that is to the contractive operator-valued analytic functions on $\De$.
More precisely, let $\mathcal{E}$ and $\e'$ be two separable Hilbert spaces and let $H^{\infty}(\mathcal{E}, \e')$ be the Banach space of all norm bounded analytic functions on $\De$ with values in $\mathcal{B}(\mathcal{E},\e')$. We denote $H^{\infty}=H^{\infty}(\C)$. If $H_1^{\infty}(\mathcal{E},\e')$ is the closed unit ball of $H^{\infty}(\mathcal{E},\e')$ and $F\in H_1^{\infty}(\mathcal{E},\e')$, then the associated \emph{Toeplitz operator} is the contraction $T_F$ between the vector valued Hardy spaces $H^2(\mathcal{E})$ and $H^2(\e')$ defined by
$$
(T_Fg)(\lambda)=F(\lambda )g(\lambda )\quad (g\in H^2(\mathcal{E}), \lambda \in \De).
$$
For $F,G \in H_1^{\infty}(\mathcal{E},\e')$ we say that $F$ \emph{is Shmul'yan-ter Horst dominated by} $G$, in notation $F \PrecH G$, whenever $T_F \PrecS T_G$, i.e. $T_F$ is Shmul'yan dominated by $T_G$ in $\mathcal{B}_1 (H^2(\e), H^2(\e'))$. We denote by $\Delta_{\infty}(F)$ the equivalence class of $F$ for the equivalence relation induced by
the Shmul'yan-ter Horst pre-order.

\medskip

{\bf Organization of the paper.} The plan of the paper is as follows. The undefined terms will be defined later on.

In Section \ref{sec:2} we show that two commuting contractions $T$ and $T'$ with $T\Prec T'$ have
 the same $C_{1\cdot}$-part in their canonical triangulation, while the $C_{0 \cdot}$-parts are in the same relation of Harnack domination (Theorem \ref{te21}). Also, we study the relation between the reducing isometric parts (Theorem \ref{te24}). We exhibit some commuting conditions under which the Harnack equivalence of two given contractions $T, T' \in \B$ with $T$ quasi-normal is equivalent to their Shmul'yan equivalence and also to the existence of an arc joining $T$ and $T'$ in the class of $\mathcal{B}(\h)$-valued contractive analytic functions on the unit disc (Theorem \ref{te210}).
As a consequence, we obtain in Corollary \ref{co213} that if $T$ and $T'$ are doubly commuting contractions on $\h$ such that $T$ is quasi-normal and $T\Prec T'$, then $T$ and $T'$ are Shmul'yan equivalent.
This fact generalizes and improves a result from\cite{AST}, which asserts that two commuting normal contractions in Harnack domination are, in fact, Harnack equivalent.
In Section \ref{sec:3} we describe the Shmul'yan equivalence of two contractions acting between two Hilbert spaces in terms of Schur class functions and of the Kobayashi pseudo-distance (Theorem \ref{te31nou}). We apply this result to some classes of contractions to obtain necessary and/or sufficient conditions for Shmul'yan equivalence. We discuss here partial isometries, quasi-isometries and quasi-normal contractions (Propositions \ref{pr35nou}, \ref{pr37} and \ref{pr311}). Also, we show that the corresponding Harnack and Shmul'yan parts of a partial isometry coincide (Theorem \ref{pr38}).

 In Section \ref{sec:4} we study some properties of the
Shmul'yan-ter Horst relation.  In Theorems \ref{te44} and \ref{te45} we give a description of the corresponding equivalence class $\Delta_{\infty}(T)$ of a partial isometry $T$, viewed as a constant function in $H^{\infty}(\mathcal{E},\e')$.
\medskip

{\bf Acknowledgments.}
Both authors have been supported by the ``Laboratoire Europ\'een Associ\'e CNRS Franco-Roumain'' Math Mode. The first author has also been partially supported by the Labex CEMPI  (ANR-11-LABX-0007-01) and by the EU IRSES grant AOS (PIRSES-GA-2012-318910). The second author has also been partially supported by CNRS, France, during a three months visit to Lille. The paper was completed while the first author was in residence at MSRI (Berkeley, California) during the Fall 2016 semester; he acknowledges support from NSF Grant No. DMS-1440140. The authors are grateful to the referee for many constructive remarks, suggestions and corrections.

\section{Commuting contractions and Harnack domination}\label{sec:2}
\medskip

{\bf Canonical triangulations.} Let $T$ be a contraction acting on a Hilbert space $\h$. The \emph{asymptotic limit} $S_T \in \B$ of the
 contraction $T$ acting on a Hilbert space $\h$ is the strong limit of the sequence $\{T^{*n}T^n\}_{n\in \N}$; see, for instance,~\cite[Chapter 3]{K}.
 The asymptotic limit $S_T$ is a positive contraction and $\|S_T\|=1$ whenever $S_T\neq 0$. Notice that $\n(I-S_T)=\bigcap_{n\ge1} \n(I-T^{*n}T^n)$ is the maximal $T$-invariant subspace of $\h$ on which $T$ is an isometry, called the \emph{invariant isometric part} of $T$ in $\h$.

We say that $T$ is \emph{strongly stable} if the sequence
 $\{T^n\}_{n\in \N}$ is strongly convergent to $0$ in $\B$.
 Also, we say that $T$ is of \emph{class} $C_{0\cdot}$ (respectively, $C_{\cdot 0}$) when $T$ ($T^*$) is strongly stable. In terms of the asymptotic limit this means that $S_T=0$
 (respectively, $S_{T^*}=0$). We say that $T$ is of \emph{class} $C_{00}$ if it is of class
 $C_{0\cdot}$ and of class $C_{\cdot 0}$ and that $T$ is of \emph{class} $C_{1\cdot}$
(respectively, $C_{\cdot 1}$) if $T^nh \nrightarrow 0$ (respectively $T^{*n}h \nrightarrow 0$)
for each non-zero $h\in \h$. Also, $T$ is
of \emph{class} $C_{11}$ if both $T$ and $T^*$ are of class $C_{1\cdot}$.

Following the terminology of \cite{SFb}, the \emph{canonical triangulation} of $T$ is the matrix representation of $T$ with respect to the decomposition $\h=\n(S_T) \oplus \overline{\R(S_T)}$. Here $\n(S_T)$ is the maximum invariant subspace for $T$ on which the operator $T$ is of class $C_{0\cdot}$, while $\overline{\R(S_T)}$ is the maximum invariant subspace for $T^*$ on which $T^*$ is of class $C_{\cdot 1}$. In terms of the operator $T$ itself, the latter condition means that the contraction $PT|_{\overline{\R(S_T)}}$ is of class $C_{1\cdot}$, where $P$ is the orthogonal projection onto $\overline{\R(S_T)}$. The canonical triangulation is not preserved by the Harnack domination, more precisely it is possible to have $T \Prec T'$ and $\n(S_T)\neq \n(S_{T'})$. However, the classes $C_{0\cdot}$, $C_{\cdot0}$ and $C_{00}$ are preserved in both senses by the Harnack domination (see \cite[Theorem 5.5]{BST}). On the other hand, the class $C_{11}$ (hence $C_{1\cdot}$ and $C_{\cdot 1}$) are not preserved by the Harnack equivalence (see \cite[Propositions 5.9 and 5.10]{BST}).

In the following result we show that under the condition of commutation $TT'=T'T$ the $C_{1\cdot}$-part in the canonical triangulation is uniquely determined.

\begin{theorem}\label{te21}
Let $T$ and $T'$ be commuting contractions on $\h$ such that $T$ is Harnack dominated by $T'$. Then the canonical triangulations of $T$ and $T'$ have the form
\begin{equation}\label{eq21}
T=
\begin{pmatrix}
Q & R\\
0 & W
\end{pmatrix},
\quad
T'=
\begin{pmatrix}
Q' & R'\\
0 & W
\end{pmatrix},
\end{equation}
where $Q,Q'$ are of class $C_{0\cdot}$ on $\n(S_T)=\n(S_{T'})$ with $QQ'=Q'Q$ and $Q \Prec Q'$, and $W$ is of class $C_{1\cdot}$ on $\overline{\R(S_T)}$ such that $QR'-Q'R=(R'-R)W$.

Moreover, we have $S_T=S_T^2$ if and only if $S_{T'}=S_{T'}^2$. In this case $S_T=S_{T'}$, $W$ is an isometry and $R=R'=0$; in addition $T$ and $T'$ are Harnack equivalent if and only if $Q'\Prec Q$.
\end{theorem}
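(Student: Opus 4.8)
The plan is to exploit Theorem \ref{thm:1.1}: since $T \Prec T'$, there is a Harnack operator $A \in \mathcal{B}(\ka',\ka)$ with $A|_{\h}=I$, $AV'=VA$ and $\|A\|\le c$, where $V,V'$ are the minimal isometric dilations of $T,T'$. The first task is to establish the block form \eqref{eq21}. One knows from \cite[Theorem 5.5]{BST} that the classes $C_{0\cdot}$ and $C_{\cdot0}$ are preserved in both directions by Harnack domination. The subtlety here is that the author does \emph{not} claim $\n(S_T)=\n(S_{T'})$ in general; so to get it one must use commutativity. I would argue that $\overline{\R(S_T)}$ is the maximal subspace $\m\subseteq\h$ that is invariant for $T^*$ and on which $T^*$ restricts to a $C_{\cdot1}$ contraction, and then show, using $TT'=T'T$ together with the intertwining $AV'=VA$, that $\m$ is also invariant for $T'^*$ and $T'^*|_\m$ is of class $C_{\cdot1}$; the reverse inclusion follows by a symmetric argument once one knows $T'\Prec T$ on this common corner (or directly, since the $C_{1\cdot}$-part of a contraction is unique up to the asymptotic limit). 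This forces $\n(S_T)=\n(S_{T'})=:\n$, hence the lower-left blocks of both matrices vanish and the lower-right blocks act on the same space $\m=\overline{\R(S_T)}$. That the two lower-right blocks are equal (call it $W$), and that $QR'-Q'R=(R'-R)W$, is then a computation comparing $TT'$ and $T'T$ blockwise on $\h=\n\oplus\m$; equating the $(2,2)$-entries gives $W^2$ on both sides, and equating the $(1,2)$-entries gives the stated identity. The claim $Q\Prec Q'$ with $QQ'=Q'Q$ is the content of the first part of the theorem (already stated), so I would only need to note that commuting restrictions to a common invariant subspace commute, and that compression of a Harnack operator to the minimal isometric dilations of the $C_{0\cdot}$-parts is again a Harnack operator.

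Next, the equivalence $S_T=S_T^2 \iff S_{T'}=S_{T'}^2$. Here I would use that $S_T$ and $S_{T'}$ act block-diagonally with respect to $\n\oplus\m$: on $\n$ both are $S_Q$ and $S_{Q'}$ respectively, and $Q,Q'$ are $C_{0\cdot}$ so $S_Q=S_{Q'}=0$ there; on $\m$ one has the asymptotic limit of $W$. Thus $S_T=S_T^2$ reduces to $S_W=S_W^2$, and likewise $S_{T'}=S_{T'}^2$ reduces to the \emph{same} condition $S_W=S_W^2$ — so the equivalence is immediate, and moreover $S_T=S_{T'}=0\oplus S_W$. (One has to be a little careful that the asymptotic limit of $T$ really is block-diagonal: this uses that $\n$ is $T$-invariant and $\m$ is $T^*$-invariant, so $T^{*n}T^n$ preserves the decomposition, hence so does its strong limit.) For the ``in this case'' part: when $S_W$ is a projection, $\m=\overline{\R(S_T)}$ is by definition the range closure, and $\n(I-S_W)\subseteq\m$ is the invariant isometric part of $W$; but $\m$ is also the maximal subspace on which $T^*$ is $C_{\cdot1}$, which when $S_T$ is a projection forces $S_W=I_\m$, i.e. $W$ is an isometry. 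With $W$ an isometry, I would plug back into the $C_{0\cdot}/C_{\cdot1}$ structure: the row $(R\ W)$ in $T=\begin{pmatrix}Q&R\\0&W\end{pmatrix}$ together with $\|T\|\le1$ and $W^*W=I_\m$ forces $R^*R\le I-W^*W=0$ via the block computation of $I-T^*T$, hence $R=0$, and identically $R'=0$.

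Finally, under these hypotheses ($R=R'=0$, $W$ isometric), $T=Q\oplus W$ and $T'=Q'\oplus W$, so $T\Prec T'$ is equivalent to $Q\Prec Q'$ (the $W$-part contributes nothing new, and Harnack domination respects direct sums over a common summand). Hence $T$ and $T'$ are Harnack equivalent iff, in addition, $T'\Prec T$, i.e. iff $Q'\Prec Q$. I expect the main obstacle to be the first step — proving $\n(S_T)=\n(S_{T'})$ and extracting the common block $W$ cleanly — since this is where commutativity must be combined with the dilation-level intertwining; once the block-triangular picture with a shared isometric corner is in place, everything else is bookkeeping with asymptotic limits and the Harnack operator. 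A secondary technical point to handle carefully is verifying that the asymptotic limit is genuinely block-diagonal and that $\m$ equals the invariant isometric part of $W$ precisely when $S_T$ is idempotent.
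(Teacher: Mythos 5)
There is a genuine gap at the central step of the first assertion: the equality of the two lower-right corners. You claim that ``the two lower-right blocks are equal'' follows from ``a computation comparing $TT'$ and $T'T$ blockwise,'' but writing $T,T'$ upper-triangularly with corners $W,W'$ and equating the $(2,2)$-entries of $TT'$ and $T'T$ only yields $WW'=W'W$, never $W=W'$ (two distinct commuting isometries already show that commutativity of the corners cannot force equality). The equality $W=W'$ is precisely where Harnack domination must be injected, and the paper isolates this as a separate lemma: if $TT'=T'T$, $T\Prec T'$ and $T$ is of class $C_{1\cdot}$ or $C_{\cdot1}$, then $T=T'$. Its proof is not bookkeeping: one uses the representation $T=T'+BJ'D_{T'}$ with $BS'=TB$ coming from the Harnack domination, iterates it to get $T^{*n}=T'^{*}T^{*(n-1)}+D_{T'}J'^{*}S'^{*(n-1)}B^{*}$, invokes commutativity to rewrite the left-hand side as $T^{*(n-1)}(T^{*}-T'^{*})$, and lets $n\to\infty$ (using $S'^{*n}\to 0$ strongly) to conclude $S_{T^{*}}(T^{*}-T'^{*})=0$, whence $T=T'$ in the $C_{\cdot1}$ (hence also $C_{1\cdot}$) case. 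Applied to the corners $W\Prec W'$, $WW'=W'W$ with $W$ of class $C_{1\cdot}$, this gives $W=W'$. Your proposal never supplies an argument of this type, and without it the theorem's main structural claim is unproved. Relatedly, your route to $\n(S_T)=\n(S_{T'})$ is circular: you invoke ``$T'\Prec T$ on this common corner,'' which is not part of the hypotheses. The paper instead gets the inclusion $\n(S_T)\subset\n(S_{T'})$ from the preservation of class $C_{0\cdot}$ under Harnack domination applied to $Q\Prec Q'$, and then uses $W=W'$ (with $W$ of class $C_{1\cdot}$) to upgrade the inclusion to an equality.

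Two secondary inaccuracies: first, $S_T$ is \emph{not} block-diagonal with $(2,2)$-entry $S_W$; since $T^n=\bigl(\begin{smallmatrix}Q^n & R_n\\ 0 & W^n\end{smallmatrix}\bigr)$, one has $S_T|_{\overline{\R(S_T)}}=\lim_n(R_n^{*}R_n+W^{*n}W^n)\ge S_W$, with equality failing in general. The correct reduction is that $S_T=S_T^2$ iff $S_T|_{\overline{\R(S_T)}}=I$ (a positive injective operator with dense range is a projection only if it is the identity), iff $T$ is isometric on $\overline{\R(S_T)}$, iff $W$ is an isometry and $R=0$; this is the mechanism the paper uses, and your conclusion survives even though your stated reason does not. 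Second, once $W$ is an isometry your argument that $R=0$ (and $R'=0$) from $I-T^{*}T\ge 0$ is correct, and the final equivalence ``Harnack equivalent iff $Q'\Prec Q$'' for $T=Q\oplus W$, $T'=Q'\oplus W$ is fine. So the second half of your plan is essentially sound, but the first half needs the lemma described above; as written, the key identity $W=W'$ is asserted on the basis of an algebraic computation that cannot deliver it.
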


To proceed, we need the following auxiliary result.

\begin{lemma}\label{lemma0}
Let $T$ and $T'$ be two contractions on $\h$ and suppose that $TT'=T'T$ and $T \Prec T'$. If $T$  is of class $C_{\cdot 1}$ or of class $C_{1\cdot}$, then $T'=T$.
\end{lemma}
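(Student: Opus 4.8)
The plan is to exploit Theorem \ref{thm:1.1}: since $T \Prec T'$, say with constant $c^2$, there is a Harnack operator $A \in \mathcal{B}(\ka',\ka)$ with $A|\h = I$, $A(\h)\subseteq \h$, $AV'=VA$ and $\|A\|\le c$. First I would treat the case where $T$ is of class $C_{1\cdot}$. The key point is that the hypothesis $TT'=T'T$ forces a stronger intertwining: one should check that in this situation $T$ and $T'$ themselves satisfy the dilation-type relation on $\h$, i.e. that the compression of $A$ behaves well. Concretely, because $TT' = T'T$ and both $V,V'$ are the minimal isometric dilations, one gets $TA_0 = A_0 T'$ on $\h$ where $A_0 = P_\h A|_{\h'}$... but in fact here $\h = \h'$ and $A_0 = I$, so the commutation hypothesis combined with $AV' = VA$ and $A|\h = I$ should be leveraged to show $A$ intertwines not merely the dilations but descends to $T = T'$ directly. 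The cleanest route: set $Z = A - A^* $ or work with $\|T'^n h\|$ versus $\|T^n h\|$. Since $T \Prec T'$ one has (a standard consequence, e.g. from the defect-operator inequalities behind \eqref{eq:definition of Harnack}) that $\|T^n h\| \le \|T'^n h\|$ fails in general, but one does have control of the asymptotic limits: $S_T \le c^2 S_{T'}$ and, by symmetry considerations unavailable here, one cannot immediately reverse it. So instead I would argue: if $T$ is $C_{1\cdot}$, then $S_T$ is injective, hence $c^2 S_{T'} \ge S_T > 0$ shows $T'$ is also $C_{1\cdot}$; then restrict attention to the $C_{1\cdot}$ (indeed $C_{11}$, after also using $C_{\cdot 1}$) structure.

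The heart of the argument is the $C_{\cdot 1}$ case (the $C_{1\cdot}$ case reduces to it by passing to adjoints, since $T^* \Prec T'^*$ need not hold — so actually the two cases must be handled separately, and this is the main subtlety). For the $C_{\cdot 1}$ case I would use that $T$ being $C_{\cdot 1}$ means $T^{*n} h \not\to 0$ for every nonzero $h$, equivalently $S_{T^*}$ is injective. Now write $B = T' - T$; the commutation $TT' = T'T$ gives $T B = B T$. The Harnack domination with constant $c^2$ yields, via Theorem \ref{thm:1.2}(ii) applied appropriately or directly from \eqref{eq:definition of Harnack} with $p(z) = 1 \pm \varepsilon z$ type polynomials, that $\Re\big(\varepsilon(T' - T)\big) \le (c^2 - 1)\,\tfrac12\,\mathrm{something}$; more usefully, the standard fact that $T \Prec T'$ implies $D_{T'}^2 \le c^2 D_T^2$-type estimates combined with an identity for $D_{T'}^2 - D_T^2$ in terms of $B$. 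The plan is to show $B^* B$ (or $BB^*$) is dominated by an operator that, under the commuting hypothesis and the $C_{\cdot1}$ condition, is forced to vanish on a dense set. Specifically: iterate $TB = BT$ to get $T^n B = B T^n$, so $\|B T^{*n} h\| $ relates to $\|T^{*n} B^* \cdot\|$; pairing with the fact that $T^{*n}h \not\to 0$ and that $B$ must be "asymptotically negligible" (this is where Harnack domination enters — $T$ and $T'$ have the same unitary part and in the $C_{\cdot 1}$ situation the relevant part is unitary-like), conclude $Bh = 0$ for all $h$.

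Concretely for the $C_{\cdot 1}$ case I would proceed: let $A$ be the Harnack operator; then $A^*A - I \ge 0$ is the obstruction and $A^*A \le c^2 I$. Consider $V' = A^{-1}VA$ formally — better, use that on the minimal dilation space the commutation lifts so that $A$ commutes with the dilation of $T - T'$; then analyze the wandering/residual parts. The main obstacle I anticipate is precisely that $C_{1\cdot}$ and $C_{\cdot 1}$ are not interchangeable under Harnack domination (domination is not symmetric and does not pass to adjoints), so one genuinely needs two arguments; and the $C_{1\cdot}$ argument, which cannot be dualized, will require working directly with the intertwining operator $A$ and the commutation $AV' = VA$ together with $TT'=T'T$ to pin down that $A$ is an isometry on the relevant reducing subspace, from which $\|A\| = 1$ on that part forces $T' = T$ there, and the $C_{1\cdot}$ hypothesis propagates this to all of $\h$. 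Once $B = T'-T = 0$ on a generating subspace, $B = 0$ by the commutation-driven invariance, giving $T' = T$.
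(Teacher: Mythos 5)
You have not given a proof; what you have written is a sequence of plans, each of which stops short of the step that actually does the work. Two concrete problems stand out. First, your central structural claim --- that ``$T^* \Prec T'^*$ need not hold'' and that therefore the $C_{1\cdot}$ case cannot be obtained from the $C_{\cdot 1}$ case by duality --- is false. Harnack domination is self-dual: if $p$ is an analytic polynomial with $\Re p \ge 0$ on $\overline{\De}$, then so is $q(z)=\overline{p(\bar z)}$, and $\Re q(T^*)=\Re\bigl(p(T)^*\bigr)=\Re p(T)$, so $T\Prec T'$ with constant $c$ gives $T^*\Prec T'^*$ with the same constant (the paper records this in Remark \ref{re211'} and uses exactly this duality to dispatch the $C_{1\cdot}$ case in one line, since $T^*$ and $T'^*$ still commute). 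The ``main subtlety'' you organize your argument around does not exist.

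Second, and more seriously, the $C_{\cdot 1}$ case is never actually proved. The paper's argument rests on a specific quantitative consequence of $T\Prec T'$ (from \cite{S2,KSS}): there is $B\in\mathcal{B}(l^2_{\N}(\D_{T'}),\h)$ with $T=T'+BJ'D_{T'}$ and $BS'=TB$, where $S'$ is the forward shift on $l^2_{\N}(\D_{T'})$. Iterating, taking adjoints, and only then invoking $TT'=T'T$ gives
\begin{equation*}
T^{*(n-1)}\bigl(T^*-T'^*\bigr)=D_{T'}J'^{*}S'^{*(n-1)}B^{*}\longrightarrow 0 \quad\text{strongly},
\end{equation*}
because $S'^{*n}\to 0$ strongly; hence $S_{T^*}(T^*-T'^*)=0$, and injectivity of $S_{T^*}$ (the $C_{\cdot 1}$ hypothesis) forces $T=T'$. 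Your proposal has no substitute for this mechanism: the assertion that $B=T'-T$ is ``asymptotically negligible'' because ``$T$ and $T'$ have the same unitary part'' is unjustified here (that equality of unitary parts is Corollary \ref{co27}, which is proved \emph{after} and \emph{using} this lemma, so the reasoning is circular) and in any case insufficient, since a $C_{\cdot 1}$ contraction need not have any unitary part at all. The remaining suggestions (``set $Z=A-A^*$'', ``analyze the wandering/residual parts'', ``pin down that $A$ is an isometry on the relevant reducing subspace'') are not carried out and do not obviously lead anywhere. As written, the proposal contains no complete argument for either case.
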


\begin{proof}
Suppose that $TT'=T'T$ and $T \Prec T'$. It is known from \cite{S2} and \cite{KSS} that there exists an operator $B \in \mathcal{B}(l^2_{\N}(\mathcal{D}_{T'}),\h)$ such that
$$
T=T'+BJ'D_{T'}, \quad BS'=TB,
$$
where $J'$ is the canonical inclusion of $\mathcal{D}_{T'}$ into $l^2_{\N}(\mathcal{D}_{T'})$ and $S'$ is the unilateral forward shift on $l^2_{\N}(\mathcal{D}_{T'})$. Then multiplying the first relation by $T^{n-1}$ for $n\ge 1$ we infer
$$
T^n=T^{n-1}T'+BS'^{(n-1)}J'D_{T'},
$$
or equivalently
$$
T^{*n}=T'^{*}T^{*(n-1)}+D_{T'}J^{'*}S'^{*(n-1)}B^*.
$$

Since $S'^{*n} \to 0$ strongly we have $(T^*-T'^*)T^{*(n-1)} \to 0$ strongly on $\h$. Using the commutativity of $T$ and $T'$ we get
$$
T^{*(n-1)}(T^*-T^{'*})=D_{T'}J^{'*}S'^{*(n-1)}B^* .
$$
Using again that $S^{'*n} \to 0$ strongly, we obtain $S_{T^*}(T^*-T'^*)=0$. This relation shows that if $T$ is of class $C_{\cdot 1}$, that is $\n(S_{T^*})=\{0\}$, then $T=T'$. Similarly, the same conclusion holds if $T$ is of class $C_{1\cdot}$ because
$T^* \Prec T'^*$ and $T^*$ commutes with $T^{'*}$.
\end{proof}

\begin{proof}[Proof of Theorem \ref{te21}]
We notice that the commutativity condition $TT'=T'T$ implies that the subspace $\n(S_T)=\{h \in \h: T^n h \to 0\}$ is invariant for both $T$ and $T'$. Therefore, the matrix representations of $T$ and $T'$ with respect to the decomposition $\h=\n(S_T) \oplus \overline{\R(S_T)}$ are of the form
$$
T=
\begin{pmatrix}
Q & R \\
0 & W
\end{pmatrix},
\quad T'=
\begin{pmatrix}
Q' & R'\\
0 & W'
\end{pmatrix},
$$
where $QQ'=Q'Q$, $WW'=W'W$ and $QR'+RW'=Q'R+R'W$. Clearly, the matrix of $T$ is just its canonical triangulation. Since the joint closed invariant subspaces conserve the Harnack domination (see \cite[Lemma 2.2]{BST}), one has $Q \Prec Q'$ and $W^* \Prec W'^*$, hence $W \Prec W'$. Also, $Q'$ is of class $C_{0\cdot}$ because $Q$ is such (see \cite[Theorem 5.5]{BST}). Therefore $\n(S_T) \subset \n(S_{T'})$. Lemma \ref{lemma0} implies that $W=W'$ since $W$ is of class $C_{1\cdot}$ by the properties of the canonical triangulation. Keeping in mind the inclusion $\n(S_T) \subset \n(S_{T'})$, the condition $W=W'$ forces the equality $\n(S_T)=\n(S_{T'})$. Therefore we have $\overline{\R(S_T)} =\overline{\R(S_{T'})}$. Hence the above matrix representation of $T'$ is exactly its canonical triangulation. The first assertion of theorem is proved.

Let us assume now that $S_T=S_T^2$. Then $S_T$ is an orthogonal projection onto $\R(S_T)=\n(I-S_T)$, a subspace which is reducing for $T$. Since $W|_{\R(S_T)}$ is an isometry and $T'^*T'\le I$, it follows that $R'=0$ in the block matrix \eqref{eq21} of $T'$. So $\R(S_T)$ also reduces $T'$ and $T=Q \oplus W$, $T'=Q'\oplus W$. As $Q$ and $Q'$ are of class $C_{0\cdot}$, we have $S_{T'}=S_{T'}^2=S_T$.

Conversely, suppose that $S_{T'}=S_{T'}^2$. Then from \eqref{eq21} we have that $W=T'|_{\R(S_T)}$ is an isometry. Therefore $\n(I-S_T)\subset \overline{\R(S_T)}=\n(I-S_{T'})$. As the reverse inclusion also holds (we use here the relation $T \Prec T'$ and \cite[Lemma 5.1]{BST}), we obtain $\n(I-S_T)=\R(S_T)=\n(I-S_{T'})$. We conclude that $S_T=S_T^2$ and, in this case, the relation of Harnack equivalence between $T$ and $T'$ reduces to the relation $Q'\Prec Q$. This ends the proof.
\end{proof}

We infer the following consequence, completing Lemma \ref{lemma0}.

\begin{corollary}\label{co22nou}
Let $T$ and $T'$ be commuting contractions on $\h$ such that $T$ is Harnack dominated by $T'$. If $T'$ is of class $C_{1\cdot}$ or of class $C_{\cdot 1}$, then $T'=T$.
\end{corollary}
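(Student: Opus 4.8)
The plan is to reduce the statement to Lemma~\ref{lemma0} together with Theorem~\ref{te21}. Since Lemma~\ref{lemma0} requires the \emph{dominated} contraction (there called $T$) to be of class $C_{1\cdot}$ or $C_{\cdot 1}$, while here the hypothesis is imposed on the \emph{dominating} contraction $T'$, the only real point is to transfer the class property from $T'$ to $T$.

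First I would treat the case where $T'$ is of class $C_{1\cdot}$. By definition this means $T'^n h \nrightarrow 0$ for every nonzero $h\in\h$, i.e. $\n(S_{T'})=\{0\}$. Because $TT'=T'T$ and $T\Prec T'$, Theorem~\ref{te21} applies and gives $\n(S_T)=\n(S_{T'})=\{0\}$, so $T$ is itself of class $C_{1\cdot}$. Now $T$ is of class $C_{1\cdot}$, commutes with $T'$ and satisfies $T\Prec T'$, so Lemma~\ref{lemma0} yields $T'=T$.

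For the case where $T'$ is of class $C_{\cdot 1}$ I would pass to adjoints. Taking adjoints in \eqref{eq:definition of Harnack} and noting that the polynomial $\bar p$ with complex-conjugated coefficients again satisfies $\Re\bar p(z)\ge 0$ for $|z|\le 1$ (since $\bar p(\bar z)=\overline{p(z)}$ and $\Re$ is unchanged under taking adjoints), one sees that $T\Prec T'$ forces $T^*\Prec T'^*$; moreover $T^*T'^*=(T'T)^*=(TT')^*=T'^*T^*$. Since $T'$ being of class $C_{\cdot 1}$ means that $T'^*$ is of class $C_{1\cdot}$, the previous paragraph applied to the pair $(T^*,T'^*)$ in place of $(T,T')$ gives $T'^*=T^*$, hence $T'=T$.

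I do not expect a genuine obstacle: the argument is entirely formal once Theorem~\ref{te21} is available. The one point worth stressing is that commutativity enters precisely at the step invoking Theorem~\ref{te21} to obtain $\n(S_T)=\n(S_{T'})$ (equivalently the inclusion $\n(S_T)\subseteq\n(S_{T'})$); without $TT'=T'T$ the canonical triangulation need not be preserved by Harnack domination, this inclusion can fail, and then the conclusion can fail as well.
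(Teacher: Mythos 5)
Your proof is correct and follows essentially the same route as the paper: both rest on Theorem~\ref{te21} giving $\n(S_T)=\n(S_{T'})$ and a common $C_{1\cdot}$-corner $W$ (the paper reads $T=W=T'$ off the triangulation directly, while you re-invoke Lemma~\ref{lemma0}, which is what proves that part of Theorem~\ref{te21} anyway). Your explicit reduction of the $C_{\cdot 1}$ case to the $C_{1\cdot}$ case via $T^*\Prec T'^*$ is a welcome clarification, since the paper's one-line justification literally addresses only the $C_{1\cdot}$ case.
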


\begin{proof}
If $T'$ is of class $C_{1\cdot}$ (or $C_{\cdot1}$), then the conclusion $T=T'$ follows from the assertion of Theorem \ref{te21}. Indeed, in this case $T'=W=T$ on $\h=\overline{\R(S_{T'})}=\overline{\R(S_T)}$.
\end{proof}

\begin{remark}\label{re22}
\rm
A different proof of Theorem \ref{te21} can be given starting from the canonical triangulation of $T'$ with respect to the decomposition $\h=\n(S_{T'}) \oplus \overline{\R(S_{T'})}$. Indeed, in this case $\n(S_{T'})$ is invariant for $T$, and using  Theorem \ref{te21}, the operator $T$ will have a matrix representation of the form given in Theorem \ref{te21}. As $Q \Prec Q'$ and $Q'$ is of class $C_{0\cdot}$, $Q$ will be also of class $C_{0\cdot}$ on $\n(S_{T'})$ (see  \cite[Theorem 5.5]{BST}). Therefore $\n(S_{T'})\subset \n(S_T)$. But $T^*=T'^*=W^*$ on $\overline{\R(S_{T'})}\supset \overline{\R(S_T)}$ and $W$ is of class $C_{1\cdot}$, so $\n(S_T)=\n(S_{T'})$.
\end{remark}

\begin{corollary}\label{co23}
Let $T$ and $T'$ be commuting contractions on $\h$ such that $T$ is Harnack dominated by $T'$. Then $\n(I-S_T)=\n(I-S_{T'})$ if and only if $\n(I-S_T)$ is invariant for $T'$. This occurs, in particular, when $T'$ and $S_T$ commute.
\end{corollary}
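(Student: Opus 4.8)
The plan is to reduce the substantive implication to Lemma~\ref{lemma0}, applied on the invariant isometric part of $T$. Write $\el:=\n(I-S_T)$ for the invariant isometric part of $T$; as recalled above it is a closed, $T$-invariant subspace on which $U:=T|_{\el}$ is an isometry, and $U$ is automatically of class $C_{1\cdot}$, since $\|U^n h\|=\|h\|$ for every $h\in\el$. First I would isolate the one inclusion that costs nothing: because $T\Prec T'$, one always has $\n(I-S_{T'})\subseteq\n(I-S_T)=\el$ (this is \cite[Lemma~5.1]{BST}; alternatively it drops out of Theorem~\ref{thm:1.1}, since for $h\in\n(I-S_{T'})$ one has $V'^n h=T'^n h\in\h$, and the Harnack operator $A$ satisfies $Ah=h$ and $AV'=VA$, so $V^n h=AV'^n h=V'^n h=T'^n h\in\h$ for all $n$, whence $h\in\el$). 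Consequently the asserted equivalence is the same as the statement: $\el$ is invariant for $T'$ if and only if $\el\subseteq\n(I-S_{T'})$, i.e. if and only if $T'$ is isometric on $\el$.

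The first direction is then immediate: if $\n(I-S_T)=\n(I-S_{T'})$, the right-hand side is $T'$-invariant by the very definition of the invariant isometric part of $T'$, so $\el$ is invariant for $T'$. For the converse, suppose $\el$ is invariant for $T'$. Since $\el$ is also $T$-invariant and $T,T'$ commute, $\el$ is a common closed invariant subspace, so $U=T|_{\el}$ and $U':=T'|_{\el}$ commute and, by \cite[Lemma~2.2]{BST}, $U\Prec U'$. As $U$ is of class $C_{1\cdot}$, Lemma~\ref{lemma0} gives $U'=U$; hence $T'|_{\el}=U$ is an isometry, and since $\el$ is invariant for $T'$ we get $\|T'^n h\|=\|U^n h\|=\|h\|$ for all $n$ and all $h\in\el$, whence $S_{T'}h=h$ (using $0\le S_{T'}\le I$). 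Thus $\el\subseteq\n(I-S_{T'})$, and combined with the inclusion of the previous paragraph this yields $\n(I-S_T)=\n(I-S_{T'})$.

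For the last sentence: if $T'$ commutes with $S_T$, then $\el=\n(I-S_T)=\ker(I-S_T)$ is invariant (in fact reducing) for $T'$, because $(I-S_T)T'h=T'(I-S_T)h=0$ for $h\in\el$; so the criterion just proved applies. I do not expect a genuine obstacle here — the corollary is essentially a repackaging of Lemma~\ref{lemma0} together with \cite[Lemma~5.1]{BST}. The one point requiring a little care is upgrading $U'=U$ to "$h\in\n(I-S_{T'})$": this uses that $\el$ stays $T'$-invariant (exactly the standing hypothesis of the converse), so that the isometry property of $U'=T'|_{\el}$ propagates along the whole orbit $\{T'^n h\}_{n\ge0}$ and not merely at the first step.
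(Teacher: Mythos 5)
Your proposal is correct and follows essentially the same route as the paper: both use the automatic inclusion $\n(I-S_{T'})\subseteq\n(I-S_T)$ from Harnack domination, restrict the domination $T\Prec T'$ to the joint invariant subspace $\n(I-S_T)$ via \cite[Lemma~2.2]{BST}, and then apply Lemma~\ref{lemma0} to the $C_{1\cdot}$ (isometric) restriction of $T$ to conclude $T'=T$ there. Your write-up is merely more explicit about the step from ``$T'|_{\n(I-S_T)}$ is an isometry'' to ``$\n(I-S_T)\subseteq\n(I-S_{T'})$'', which the paper leaves implicit.
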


\begin{proof}
As we have already mentioned, the inclusion $\n(I-S_{T'}) \subset \n(I-S_T)$ always holds and $T=T'$ on $\n(I-S_{T'})$. Now if $\n(I-S_T)$ is invariant for $T'$, then the relation $T\Prec T'$ is also true on this subspace, while by the assertion of Lemma \ref{lemma0} we have $T=T'$ on $\n(I-S_T)$. So, $T'$ is an isometry on this subspace, which shows that $\n(I-S_T)=\n(I-S_{T'})$. The other implication is obvious.
\end{proof}

{\bf Reducing isometric and unitary parts.} Let $T$ be a contraction on $\h$. Recall that the subspace $\h_u = \n(I-S_T)\cap \n(I-S_{T^*})$ is the maximal reducing subspace for $T \in \B$ on which $T$ is unitary, called the \emph{reducing unitary part} of $T$ (see \cite{K}).
The \emph{reducing isometric part} in $\h$ of $T$, denoted $\h_i$, is the maximal reducing subspace for $T$ on which the operator $T$ is an isometry. The existence of $\h_i$ is a consequence of Zorn's Lemma; see \cite[Proposition 2.1]{LS}.
We have $\h_u \subset \h_i \subset \n(I-S_T)$. According to \cite[Proposition 2.8]{LS}, the reducing isometric part in $\h$ of $T$ can be described
using the minimal isometric dilation $V$ of $T$ as
\begin{equation}\label{ec11}
\h_i=\h \ominus \bigvee_{n,j\ge0} T^n(I-T^{*j}T^j)\h=\left\{h\in \h: V^nT^{*m}h\in \h, \hspace*{1mm} m,n \ge 0\right\} .
\end{equation}

Corollary \ref{co23} gives conditions under which the invariant isometric part of a contraction is preserved by Harnack domination. In some cases even the reducing isometric part can be conserved by Harnack domination, as follows from the last assertion of Theorem \ref{te21}, where the corresponding asymptotic limits are orthogonal projections.

If $T \Prec T'$, then we always have the inclusion $\h_u'\subset \h_u$ between the reducing unitary parts. For the reducing isometric parts $\h_i$ and $\h_i'$ there is no relationship, in general. The following result examines the case of commuting contractions.

\begin{theorem}\label{te24}
Let $T$ and $T'$ be commuting contractions on $\h$ such that $T$ is Harnack dominated by $T'$, and let $\h_i$ and $\h_i'$ be their reducing isometric parts, as above. Then:
\begin{itemize}
\item[(i)] $\h_i'\subset \h_i$ and $T'=T$ on $\h_i'$.

\item[(ii)] $\h_i'$ reduces $T$ if $\h_i \subset \n(D_{T'})$.
In addition, if $\n(I-S_{T})$ reduces $T'$, then
$$
\h_i=\h_i'=\n(I-S_{T'})=\n(I-S_T).
$$
\item[(iii)] If the Harnack operator associated to the pair $(T,T')$ is injective, then $\h_i'$ reduces $T$.
\item[(iv)] If $T$ and $T'$ are Harnack equivalent, then $\h_i=\h_i'$.
\end{itemize}
\end{theorem}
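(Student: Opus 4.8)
The plan is to read everything off the minimal isometric dilations. I will use repeatedly the description \eqref{ec11} of the reducing isometric parts, the representation $T=T'+BJ'D_{T'}$ with $TB=BS'$ recalled in the proof of Lemma \ref{lemma0}, and the Harnack operator $A\in\mathcal{B}(\ka',\ka)$ from Theorem \ref{thm:1.1}, which satisfies $A|\h=I$, $A(\h)\subseteq\h$ and $AV'=VA$; I will also use freely the inclusion $\n(I-S_{T'})\subseteq\n(I-S_T)$ valid for any Harnack domination (\cite[Lemma 5.1]{BST}), and the fact, recorded after Theorem \ref{thm:1.1}, that $A$ is the Harnack operator of $(T,T')$.

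\textbf{Parts (i) and (iv).} Since $\h_i'$ is a reducing subspace for $T'$ on which $T'$ acts isometrically, one has $\h_i'\subseteq\n(I-S_{T'})\subseteq\n(D_{T'})$, and $\n(I-S_{T'})\subseteq\n(I-S_T)$; from $T=T'+BJ'D_{T'}$ we get $Th=T'h$ for every $h\in\n(D_{T'})$, so $T=T'$ on $\h_i'$, and since $T'$ maps $\h_i'$ into itself, so does $T$ — this proves the second half of (i) and the $T$\nobreakdash-invariance of $\h_i'$. For the inclusion $\h_i'\subseteq\h_i$: given $h\in\h_i'$, \eqref{ec11} gives $V'^{n}T'^{*m}h\in\h$ for all $m,n\ge0$; applying $A$ and using $AV'^{n}=V^{n}A$ together with $A|\h=I$ (note $T'^{*m}h\in\h$) yields $V^{n}(T'^{*m}h)=V'^{n}T'^{*m}h\in\h$, so that $T'^{*m}h\in\n(I-S_T)$ for every $m$. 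It then remains to deduce $T^{*m}h\in\n(I-S_T)$ for every $m$, i.e.\ that $h\in\h_i$; for this I would track the correction $T^{*m}h-T'^{*m}h$ by means of the adjoint identity $T^{*(n-1)}(T^{*}-T'^{*})=D_{T'}J'^{*}S'^{*(n-1)}B^{*}$ from the proof of Lemma \ref{lemma0} (which also shows $(T^{*}-T'^{*})h\in\n(S_{T^{*}})$) and check that this correction never leaves $\n(I-S_T)$. I expect reconciling the $T^{*}$- and $T'^{*}$-orbits of $h$ on $\h_i'$ to be the main technical obstacle in the whole proof. Granting (i), part (iv) is immediate: applying (i) to the pair $(T,T')$ gives $\h_i'\subseteq\h_i$, and applying it to $(T',T)$ — legitimate since $T'$ is Harnack dominated by $T$ and $T'T=TT'$ — gives $\h_i\subseteq\h_i'$, hence $\h_i=\h_i'$.

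\textbf{Parts (ii) and (iii).} By (i) we already have that $\h_i'$ is $T$-invariant, $\h_i'\subseteq\h_i$ and $T=T'$ on $\h_i'$; thus ``\,$\h_i'$ reduces $T$\,'' is equivalent to ``\,$\h_i'$ is $T^{*}$-invariant\,'', and, since $T'^{*}$ already leaves $\h_i'$ invariant while $(T^{*}-T'^{*})\h_i'=D_{T'}J'^{*}B^{*}\h_i'\subseteq\mathcal{D}_{T'}$ is orthogonal to $\n(D_{T'})\supseteq\h_i'$, this is in turn equivalent to $(T^{*}-T'^{*})|_{\h_i'}=0$. Under the hypothesis $\h_i\subseteq\n(D_{T'})$ of (ii) one has $\h_i'\subseteq\h_i\subseteq\n(D_{T'})$; using $TB=BS'$, the strong convergence $S'^{*n}\to0$, and the relations linking $B$ with $D_T$ and $D_{T'}$, I would show that $B^{*}$ annihilates $\h_i$, which gives $(T^{*}-T'^{*})|_{\h_i}=0$; then $\h_i$ is a reducing subspace for $T'$ on which $T'$ is isometric, so $\h_i\subseteq\h_i'$ and, with (i), $\h_i=\h_i'$, which reduces $T$. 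For the second assertion of (ii): if $\n(I-S_T)$ reduces $T'$ then $\n(I-S_T)=\n(I-S_{T'})$ by Corollary \ref{co23}; being reducing for $T'$ and a subspace on which $T'$ is an isometry, it is contained in $\h_i'$, and since it always contains $\h_i'$ we get $\n(I-S_{T'})=\h_i'$; as $T=T'$ on it, it is likewise a reducing isometric subspace for $T$, so it equals $\h_i$, and all four spaces coincide. Finally, for (iii), injectivity of $A$ means exactly, in the Schäffer model where $A=\bigl(\begin{smallmatrix}I&B\\0&B_2\end{smallmatrix}\bigr)$ with $B_2J'D_{T'}=JD_T$ and $B_2S'=JD_TB+SB_2$, that $B_2$ is injective; exploiting these intertwinings I would again derive $(T^{*}-T'^{*})|_{\h_i'}=0$, whence $\h_i'$ is $T^{*}$-invariant and therefore reduces $T$. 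In both (ii) and (iii) the crux is the vanishing of the defect $T^{*}-T'^{*}$ on the relevant subspace; once that and (i) are in hand, the rest is bookkeeping.
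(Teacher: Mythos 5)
Your proposal correctly sets up all the right objects and handles the easy parts ($T=T'$ on $\h_i'$, $T$-invariance of $\h_i'$, the deduction of (iv) from (i), and the second assertion of (ii)), but it leaves the central step of the whole theorem unproved, and you say so yourself: after obtaining $V^{n}T'^{*m}h=V'^{n}T'^{*m}h\in\h$ for $h\in\h_i'$, you still need $V^{n}T^{*m}h\in\h$, and you defer "reconciling the $T^{*}$- and $T'^{*}$-orbits" as "the main technical obstacle." That obstacle is exactly where the content of (i) lies, and your proposed route (telescoping $T^{*m}-T'^{*m}$ via $T^{*(n-1)}(T^{*}-T'^{*})=D_{T'}J'^{*}S'^{*(n-1)}B^{*}$ and showing the correction "never leaves $\n(I-S_T)$") does not obviously close it: the individual terms $T^{*k}(T^{*}-T'^{*})T'^{*(m-1-k)}h$ land in $T^{*k}\mathcal{D}_{T'}$, which has no reason to sit inside $\n(I-S_T)$. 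What actually happens is that the correction is \emph{zero}: for $h\in\h_i'\subset\n(I-S_{T'})\subset\n(I-S_T)\cap\n(D_{T'})$ one has
$$
T^{*m}h \;=\; T^{*m}T'^{*m}T'^{m}h \;=\; T'^{*m}T^{*m}T^{m}h \;=\; T'^{*m}h,
$$
using $T'^{*m}T'^{m}h=h$, the commutativity $T^{*m}T'^{*m}=T'^{*m}T^{*m}$, the equality $T^{m}h=T'^{m}h$ on $\h_i'$, and $T^{*m}T^{m}h=h$. This one identity is the missing idea; with it, $V^{n}T^{*m}h=V^{n}T'^{*m}h\in\h$ and (i) follows.

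The same omission propagates to (ii) and (iii): your reduction of "$\h_i'$ reduces $T$" to "$(T^{*}-T'^{*})|_{\h_i'}=0$" is fine, but the claims "$B^{*}$ annihilates $\h_i$" and "exploiting these intertwinings I would again derive $(T^{*}-T'^{*})|_{\h_i'}=0$" are assertions, not arguments, and they hinge on the very computation you skipped. For the record, the paper proves the first part of (ii) without that identity, by an orthogonality argument: for $h\in\h_i'$ and $k\in\h_i\ominus\h_i'$ one has $T^{*}h\in\h_i$ and $\langle T^{*}h,k\rangle=\langle h,Tk\rangle=\langle h,T'k\rangle=\langle T'^{*}h,k\rangle=0$ (using $Tk=T'k$ from the hypothesis $\h_i\subset\n(D_{T'})$), whence $T^{*}h\in\h_i'$; and it proves (iii) by applying the injectivity of $A$ to the identity $AV'^{n}T'^{*m}T^{*}h=V^{n}T^{*}T'^{*m}h\in\h$ to conclude $V'^{n}T'^{*m}T^{*}h\in\h$, i.e.\ $T^{*}h\in\h_i'$ — not by any Schäffer-matrix manipulation with $B_2$. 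So the skeleton of your proof matches the paper's, but the three steps that carry the actual mathematical weight — the commutativity identity in (i), the orthogonality argument in (ii), and the injectivity argument in (iii) — are each either missing or replaced by an unfinished sketch.
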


\begin{proof}
To see the inclusion in (i) we use the descriptions \eqref{ec11} for $\h_i$ and $\h_i'$ with respect to the minimal isometric dilations $V\in \mathcal{B}(\ka)$ and $V'\in \mathcal{B}(\ka ')$ of $T$ and $T'$, respectively. Since
$$
\h_i' \subset \n(I-S_{T'}) \subset \n(I-S_T) \cap \n(D_{T'}),
$$
we have $T=T'$ on $\h_i'$. Therefore $\h_i'$ is invariant for $T$ and $T$ is an isometry on $\h_i'$.

Now let $h\in \h_i'$. Then, using \eqref{ec11} for $\h'_i$, we obtain $V^{'n}T'^{*m}h\in \h$ for $m,n \in \N$. Let $A\in \mathcal{B}(\ka',\ka)$ denote the Harnack operator associated with $(T,T')$ as in Theorem \ref{thm:1.1}. In particular, $A$ satisfies $AV'=VA$ with $A|_{\h}=I_{\h}$. Since $TT'=T'T$ we obviously have $T^mh=T^{'m}h$, so $T^m$ is an isometry on $\h'_i$. Therefore
\begin{eqnarray*}
V^nT^{*m}h&=& V^nT^{*m}T'^{*m}T'^mh=V^nAT'^{*m}T^{*m}T^mh\\
&=& AV'^{n}T'^{*m}h=V'^nT'^{*m}h\in \h.
\end{eqnarray*}
Hence it follows from \eqref{ec11} that $h\in \h_i$ which proves the inclusion $\h_i'\subset \h_i$. This gives a proof of statements (i) and (iv).

Suppose now that $\h_i \subset \n(D_{T'})$. Since $T=T'$ on $\n(D_{T'})$ by the relation $T \Prec T'$, for $h\in \h_i'$ and $k\in \h_i \ominus \h_i'$ we have $T^*h\in \h_i$, $T'^*h\in \h_i'$ and
$$
\langle T^*h,k \rangle =\langle h, T'k\rangle =\langle T'^*h,k\rangle =0.
$$
Therefore $T^*h\in \h_i'$ whenever $h\in \h_i'$, and consequently $\h_i'$ reduces $T$. Clearly, one has $\h'_i \subset \h_i \subset \n(I-S_T)$ and if  $\n(I-S_{T})$ reduces $T'$, then by Corollary \ref{co23} we have $\n(I-S_T)=\n(I-S_{T'})$. So, this subspace reduces $T'$ to an isometry, hence it is contained in $\h'_i$, by the maximality property of this later subspace. Finally, one obtains the equalities
$$
\h_i'=\h_i=\n(I-S_T)=\n(I-S_{T'}).
$$
Thus, the statements in (ii) are proved.

For (iii) we assume that the Harnack operator $A$ is injective.  Using \eqref{ec11} for $\h_i$ and the inclusion $T'^*h\in \h_i'\subset \h_i$, we have for $h\in \h_i'$ and $m,n\in \N$ that
$$
AV'^nT'^{*m}T^*h=V^nT^*T'^{*m}h \in \h.
$$
Let $h_{n,m}$ denote the common value of this element. As $A|_{\h}=I$, one has $AV'^nT'^{*m}T^*h=h_{n,m}=Ah_{n,m}$ and since $A$ is injective it follows that $V'^nT'^{*m}T^*h=h_{n,m}$; hence $T^*h\in \h_i'$. Therefore $\h_i'$ is invariant for $T^*$. As $T\h_i'=T'\h_i'\subset \h_i'$, the subspace $\h_i'$ reduces $T$. The statement (iii) follows, and this completes the proof.
\end{proof}

\begin{remark}\label{re26}
\rm
Suppose that  $T \Prec T'$. It is easy to see that if $\h_i=\h_i'$ then $\h_u=\h_u'$, but the converse implication is not true, in general. In fact, it was proved in \cite[Example 6.2 and Proposition 6.3]{BST} that there exists a Harnack part which contains a completely non-isometric contraction and another completely non-unitary contraction which has non-trivial reducing isometric part.
Moreover, such contractions can Harnack dominate a unitary operator (in this case, the right bilateral shift on $l^2_{\Z}(\mathcal{E})$ for a Hilbert space $\mathcal{E}$). Hence, one has $\h_u'\subset \h_u$ whenever $T \Prec T'$ but there are examples of Harnack equivalent contractions $T$ and $T'$ such that $\h_i' \nsubseteq \h_i$ (see Example \ref{ex36} below). Theorem \ref{te24} (iv) shows that $\h_i' = \h_i$ under the condition of commutativity of $T$ and $T'$.
\end{remark}

Concerning the unitary parts in $\h$ of $T$ and $T'$ (or the reducing isometric parts of $T^*$ and $T^{'*}$) we obtain in a particular case the following consequence.

\begin{corollary}\label{co27}
Let $T$ and $T'$ be commuting contractions on $\h$ such that $T$ is hyponormal and
$T \Prec T'$. Then $T$ and $T'$ have the same unitary part.
\end{corollary}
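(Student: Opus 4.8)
The plan is to establish the two inclusions $\h_u'\subseteq\h_u$ and $\h_u\subseteq\h_u'$ separately. The first holds for any pair with $T\Prec T'$ and has already been recorded above, so the whole point is the reverse inclusion $\h_u\subseteq\h_u'$; this is where the commutativity of $T$ and $T'$ — and, in a mild way, the hyponormality of $T$ — enters. The main tool will be the operator identity that is implicit in the proof of Lemma~\ref{lemma0}.

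In detail, I would proceed as follows. (1) Re-examine the computation in the proof of Lemma~\ref{lemma0}: from $TT'=T'T$ and $T\Prec T'$ it produces $S_{T^*}(T^*-T'^*)=0$, hence, taking adjoints, $(T-T')S_{T^*}=0$. Since $T\Prec T'$ also forces $T^*\Prec T'^*$ (with the same constant; this follows from the polynomial definition \eqref{eq:definition of Harnack} upon replacing $p(z)$ by $\overline{p(\bar z)}$) and $T^*,T'^*$ commute as well, the same computation applied to the pair $(T^*,T'^*)$ gives $S_T(T-T')=0$, i.e. $(T^*-T'^*)S_T=0$. (2) Note the role of hyponormality: it is classical that every power of a hyponormal operator is hyponormal, so $T^{n}T^{*n}\le T^{*n}T^{n}$ for all $n$, and letting $n\to\infty$ gives $S_{T^*}\le S_T$; hence $\h_u=\n(I-S_T)\cap\n(I-S_{T^*})=\n(I-S_{T^*})$. (This identification is only a convenience; all that is really needed below is $\h_u\subseteq\n(I-S_T)\cap\n(I-S_{T^*})$.) (3) For $h\in\h_u$ one has $S_Th=h$ and $S_{T^*}h=h$, so step (1) yields $(T-T')h=(T-T')S_{T^*}h=0$ and $(T^*-T'^*)h=(T^*-T'^*)S_Th=0$, that is $Th=T'h$ and $T^*h=T'^*h$. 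Since $\h_u$ reduces $T$, it is invariant under both $T$ and $T^*$, hence $T'h=Th\in\h_u$ and $T'^*h=T^*h\in\h_u$; therefore $\h_u$ reduces $T'$ and $T'|_{\h_u}=T|_{\h_u}$ is unitary. By maximality of the reducing unitary part $\h_u'$ of $T'$, this gives $\h_u\subseteq\h_u'$, and combined with $\h_u'\subseteq\h_u$ we obtain $\h_u=\h_u'$.

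I expect the only genuinely delicate step to be (1), namely extracting from the proof of Lemma~\ref{lemma0} the strong-operator equalities $S_{T^*}(T^*-T'^*)=0$ and (for the adjoint pair) $S_T(T-T')=0$, rather than mere strong-convergence statements. Concretely, this is the point where the relation $T^{*(n-1)}(T^*-T'^*)=D_{T'}J'^*S'^{*(n-1)}B^*$ is multiplied on the left by $T^{n-1}$ and one passes to the limit, using $\|S'^{*(n-1)}B^*x\|\to0$ for each $x$ together with $T^{n}T^{*n}\to S_{T^*}$ strongly (and symmetrically for the pair $(T^*,T'^*)$). Everything else is routine bookkeeping with reducing subspaces and with the maximality property of $\h_u'$.
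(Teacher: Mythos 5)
Your proof is correct, but it follows a genuinely different route from the paper's. The paper's argument is a two-line reduction: for hyponormal $T$ one has $S_{T^*}=S_{T^*}^2$, hence $\h_u=\n(I-S_{T^*})$, and then the last assertion of Theorem~\ref{te21} (equivalently, Theorem~\ref{te24}\,(ii)) applied to the commuting adjoint pair $T^*\Prec T'^*$ gives $\n(I-S_{T^*})=\n(I-S_{T'^*})=\h_u'$. You instead go back to the intertwining identity inside the proof of Lemma~\ref{lemma0} and extract the operator equations $S_{T^*}(T^*-T'^*)=0$ and $S_T(T-T')=0$ (the second by applying the same computation to the adjoint pair), then evaluate them on $\h_u=\n(I-S_T)\cap\n(I-S_{T^*})$ to get $T=T'$ and $T^*=T'^*$ there, so that $\h_u$ reduces $T'$ to a unitary and $\h_u\subseteq\h_u'$ by maximality; combined with the general inclusion $\h_u'\subseteq\h_u$ this finishes the argument. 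All the individual steps check out: the passage from $T^{*(n-1)}(T^*-T'^*)\to 0$ strongly to $S_{T^*}(T^*-T'^*)=0$ is exactly the limit $\langle T^{n-1}T^{*(n-1)}y,y\rangle\to\langle S_{T^*}y,y\rangle$ with $y=(T^*-T'^*)x$, and $T\Prec T'\Leftrightarrow T^*\Prec T'^*$ is recorded in Remark~\ref{re211'}. What your approach buys is worth noting explicitly: as you yourself observe, hyponormality is only used for the cosmetic identification $\h_u=\n(I-S_{T^*})$ and plays no essential role, so your argument actually proves the stronger statement that any two commuting contractions with $T\Prec T'$ have the same reducing unitary part. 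The paper's proof is shorter because it leans on Theorem~\ref{te21}, but it genuinely needs the projection property of $S_{T^*}$ and hence the hyponormality hypothesis.
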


\begin{proof}
Indeed, it is known (see \cite{K}) that $S_{T^*}=S_{T^*}^2$ for $T$ hyponormal. Therefore $\n(I-S_{T^*}) \subset \n(I-S_T)$ and by the last assertion of Theorem \ref{te21}, or by the second assertion of Theorem \ref{te24} (ii), we have
$$
\h_u=\n(I-S_{T^*})=\n(I-S_{T'^*})=\h_u'.
$$
Thus $T$ and $T'$ have the same unitary part.
\end{proof}

\bigskip

{\bf When Harnack and Shmul'yan equivalences are the same.} The next result exhibits some commutativity conditions under which the Harnack equivalence of two given contractions $T, T' \in \B$ is equivalent to their Shmul'yan equivalence (see Theorem \ref{thm:1.2}) and also with the existence of an arc joining $T$ and $T'$ in the class of $\mathcal{B}(\h)$-valued contractive analytic functions on the unit disc. Recall that a quasi-normal contraction is a contraction $T$ commuting with $T^*T$.

\begin{theorem}\label{te210}
Let $T,T'$ be contractions on $\h$ such that $T^*$ is quasi-normal and $T$ commutes with $T'$ and $T'^*T'$, while $T'$ commutes with $TT^*$. The following are equivalent :
\begin{itemize}
\item[(i)] $T$ is Harnack dominated by $T'$;

\item[(ii)] $T$ is Harnack equivalent to $T'$;

\item[(iii)] $T$ is Shmul'yan equivalent to $T'$;

\item[(iv)] There exists $z_0\in \De$ and a $\B$-valued contractive analytic function $F$ on $\De$ such that $F(0)=T$ and $F(z_0)=T'$;

\item[(v)] There exists an operator $B\in \mathcal{B}(l^2_{\N}(\mathcal{D}_{T'}),\h)$ such that
\begin{equation}\label{ec22}
T=T'+BJ'D_{T'}, \quad BS'=TB,
\end{equation}
where $J'$ is the canonical inclusion of $\mathcal{D}_{T'}$ into $l^2_{\N}(\mathcal{D}_{T'})$ and $S'$ is the forward shift on $l^2_{\N}(\mathcal{D}_{T'})$.
\end{itemize}
\end{theorem}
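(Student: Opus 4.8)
The plan is to prove the chain of implications $(v)\Rightarrow(i)\Rightarrow(iii)\Rightarrow(iv)\Rightarrow(i)$ together with $(iii)\Rightarrow(ii)\Rightarrow(i)$ and $(ii)\Rightarrow(v)$, so that the commutativity hypotheses are only genuinely used where they are needed. The cheap links first: $(ii)\Rightarrow(i)$ is trivial by definition; $(iii)\Rightarrow(ii)$ is the known fact (\cite[Proposition 1.6 and Corollary 3.3]{KSS}) that Shmul'yan equivalence implies Harnack equivalence; $(iv)\Rightarrow(i)$ follows because the values of a $\mathcal{B}(\h)$-valued contractive analytic function on a segment, or more precisely $F(0)$ and $F(z_0)$, satisfy $F(z_0)=F(0)+D_{F(0)^*}XD_{F(0)}$ by Schur-type parametrisation of the Schur class, hence $T'\PrecS T$; combined with reversing the r\^oles one gets Shmul'yan (hence Harnack) domination — but to be careful I would rather route $(iv)\Rightarrow(iii)$ through Theorem \ref{thm:1.2}(iii), observing that $(1-\varepsilon)T+\varepsilon T'$ lies on the chord between two points of the image of the disc and is a contraction for small $\varepsilon$. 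And $(i)\Rightarrow(v)$ is precisely the representation recalled in the proof of Lemma \ref{lemma0}, taken from \cite{S2,KSS}: $T=T'+BJ'D_{T'}$ with $BS'=TB$.

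The substantive content is the passage from $(i)$ (or $(v)$) back up to $(iii)$, i.e. showing that under the stated commutativity hypotheses ($T^*$ quasi-normal, $T$ commutes with $T'$ and $T'^*T'$, $T'$ commutes with $TT^*$) Harnack domination upgrades to Shmul'yan \emph{equivalence}. The idea is to use the description in Theorem \ref{thm:1.2}: I must produce $Y\in\mathcal{B}(\mathcal{D}_{T'})$ with $I-T^*T'=D_{T'}YD_{T'}$ (giving $T\PrecS T'$ via (ii) of that theorem), and symmetrically handle $T'\PrecS T$, or directly produce the operator $\widetilde X$ in statement (b) there. Starting from $(v)$, $I-T^*T'$ can be computed: from $T=T'+BJ'D_{T'}$ one gets $T^*T' = T'^*T' + D_{T'}J'^*B^*T'$, so $I-T^*T' = I - T'^*T' - D_{T'}J'^*B^*T' = D_{T'}^2 - D_{T'}J'^*B^*T' = D_{T'}(D_{T'} - J'^*B^*T')$. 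The remaining task is to show $D_{T'}-J'^*B^*T'$ factors on the left through $D_{T'}$, equivalently maps into $\mathcal{D}_{T'}$ in a bounded way; here the quasi-normality of $T^*$ (so $T$ commutes with $TT^*$) and the commutation of $T'$ with $TT^*$ and $T'^*T'$ are used to move defect operators past the relevant operators — the key algebraic facts being that $D_{T'^*}$ commutes with $T$ (since $T$ commutes with $T'T'^*$... careful: hypothesis gives $T$ commutes with $T'^*T'$, and $T'$ commutes with $TT^*$; and $T^*$ quasi-normal means $T^*$ commutes with $TT^*$, i.e. $D_{T^*}$ commutes with $T^*$) and that these commutations let one intertwine $B$ with defect operators. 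For the reverse domination I expect to use the symmetric relation $T'=T+CJ D_T$ coming from $T'\Prec T$ once Harnack equivalence is established, or to exploit the quasi-normality of $T^*$ to write an explicit $X$.

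The main obstacle, I expect, is precisely the bookkeeping in that factorisation step: showing $D_{T'}-J'^*B^*T' = D_{T'}Z$ (or the corresponding statement for $T'\PrecS T$) for a bounded $Z$ on $\mathcal{D}_{T'}$. One cannot expect $B^*T'$ itself to range in $\mathcal{D}_{T'}$; one needs to combine $B^*$ with the intertwining $BS'=TB$ and the quasi-normality to see that the "bad part'' is killed by $D_{T'}$ on the left. Concretely I would argue that on $\n(D_{T'})$ the operator $T'$ is an isometry and $T=T'$ there (because $\n(D_{T'})\subseteq \n(I-S_{T'})\subseteq\n(I-S_T)\cap\n(D_{T})$ up to the commutativity needed, cf. the reasoning in Lemma \ref{lemma0} and Corollary \ref{co23}), so that $B J' D_{T'}=0$ on $\n(D_{T'})$, and then a density/range argument transfers the needed factorisation onto $\mathcal{D}_{T'}=\overline{\R(D_{T'})}$. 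The quasi-normality of $T^*$ enters to guarantee that $S_{T^*}=S_{T^*}^2$ and that $T$ behaves well with respect to $TT^*$, which is what forces the defect spaces of $T$ and $T'$ to match up compatibly; this is the analogue, for $T^*$, of the role that $S_T=S_T^2$ played in the second half of Theorem \ref{te21}. Once $(i)\Rightarrow(iii)$ is in hand, $(iii)\Rightarrow(iv)$ follows by taking the explicit arc $F(z) = T + \tfrac{z}{r}\,(T'-T)$, which is $\mathcal{B}(\h)$-valued, analytic, and contractive for $|z|\le r$ by Theorem \ref{thm:1.2}(iii), with $F(0)=T$ and $F(r)=T'$.
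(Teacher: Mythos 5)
Your skeleton is sound and the easy links match the paper's: (ii)$\Rightarrow$(i) is trivial, (iii)$\Rightarrow$(ii) is the fact from \cite{KSS} that Shmul'yan equivalence is stronger than Harnack equivalence, (i)$\Rightarrow$(v) is the known parametrisation of Harnack domination from \cite{S2,KSS}, and (iii)$\Rightarrow$(iv)$\Rightarrow$(ii) goes through \cite{S4} and the fact that the range of a Schur-class function lies in a single Shmul'yan (hence Harnack) part. But the heart of the theorem --- the only place the commutativity hypotheses are actually used --- is the passage from (v) up to (iii), and there your proposal has a genuine gap, in fact two. First, the factorisation you aim at, $I-T^*T'=D_{T'}YD_{T'}$, is condition (ii) of Theorem \ref{thm:1.2} with $A=T'$, so even if you obtained it you would only get the one-sided domination $T\PrecS T'$; for \emph{equivalence} you need the mixed factorisation of Theorem \ref{thm:1.2}(b)/(c), namely $T-T'=D_{T^*}\widetilde{X}D_{T'}$. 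Your fallback for the reverse domination (``use $T'=T+CJD_T$ once Harnack equivalence is established'') is circular, since Harnack equivalence is precisely what is being proved. Second, the mechanism by which $D_{T'}-J'^{*}B^{*}T'$ is to be pushed through a defect operator is never supplied: you note yourself that the hypothesis gives commutation of $T$ with $T'^{*}T'$ (hence with $D_{T'}$), not with $T'T'^{*}$, and the density argument on $\n(D_{T'})$ yields nothing, since $D_{T'}$ vanishes there anyway.

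The paper's argument produces the mixed factorisation directly. Writing $B_0=BJ'$, the intertwining $BS'=TB$ gives $B=[B_0,TB_0,T^2B_0,\dots]$, so $Z=\sum_{n\ge0}T^nB_0B_0^*T^{*n}$ is a bounded positive operator satisfying the Stein equation $Z=TZT^*+B_0B_0^*$. The hypotheses are used to show $TB_0=B_0T_1$ and $TT^*B_0=B_0T_1T_1^*$ with $T_1=T|_{\mathcal{D}_{T'}}$ (one needs $\mathcal{D}_{T'}$ to reduce $T$, i.e.\ $T$ commutes with $T'^*T'$, and that $T'$ commutes with $TT^*$), and quasi-normality of $T^*$ (so $T$ commutes with $TT^*$) then yields $TZT^*=TT^*Z=ZTT^*$. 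Feeding this back into the Stein equation gives $B_0B_0^*=D_{T^*}^2Z=D_{T^*}ZD_{T^*}$, so Douglas's lemma produces $W$ with $B_0=D_{T^*}W$, hence $T=T'+D_{T^*}WD_{T'}$, which by Theorem \ref{thm:1.2}(b) is Shmul'yan \emph{equivalence} in one stroke. This construction of $Z$ and the resulting left factorisation through $D_{T^*}$ is the missing idea in your proposal.
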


\begin{proof}
The plan of the proof is the following chain of implications:
$$ {\rm (i)} \Longrightarrow {\rm (v)} \Longrightarrow {\rm (iii)} \Longrightarrow {\rm (iv)} \Longrightarrow {\rm (ii)} \Longrightarrow {\rm (i)}.$$

Suppose that (i) holds, that is $T$ is Harnack dominated by $T'$. Let $V$ and $V'$ be the minimal isometric dilations of $T$ and $T'$, acting on $\ka=\h\oplus l^2_{\N}(\mathcal{D}_T)$ and $\ka'=\h\oplus l^2_{\N}(\mathcal{D}_{T'})$, respectively. With respect to these decompositions, the matrix representations of $V$ and $V'$ are given by
\begin{equation}\label{ec24}
V=
\begin{pmatrix}
T & 0\\
JD_T & S
\end{pmatrix},
\quad
V'=
\begin{pmatrix}
T' & 0\\
J'D_{T'} & S'
\end{pmatrix},
\end{equation}
where $J$ and $S$ have similar definitions as the operators $J'$ and $S'$, respectively, defined in \eqref{ec22}. Let $A$ be the Harnack operator associated with $T$ and $T'$. Then $A$ verifies $A|_{\h}=I_{\h}$ and $AV'=VA$. This last equality yields
$$
P_{\h}AV'=P_{\h}VP_{\h}A=TP_{\h}A
$$
because $V(\ka \ominus \h)\subset \ka \ominus \h$. On the other hand, since $V'h=T'h\oplus J'D_{T'}h$ for $h\in \h$, we get
$$
Th=P_{\h}VAh=P_{\h}AV'h=T'h+P_{\h}AJ'D_{T'}h.
$$
Therefore the operator $B:=P_{\h}A|_{l^2_{\N}(\mathcal{D}_{T'})}$ satisfies the identities of (v).
This shows that (i) implies (v).

Suppose now that (v) holds true. Denote $B_0=BJ'\in \mathcal{B}(\mathcal{D}_{T'},\h)$. Then the second relation in \eqref{ec22} gives
$$
B= [B_0,TB_0,T^2B_0,...],
$$
so one can consider the positive operator $Z \in \B$ given by
$$
Z=\sum_{n=0}^{\infty} T^nB_0B_0^*T^{*n}.
$$
Since by hypothesis $T$ commutes with $T'^*T'$ it follows that $\mathcal{D}_{T'}$ reduces $T$. Set $T_1=T|_{\mathcal{D}_{T'}}$. As $T$ and $T'$ commute we have
$$
TB_0D_{T'}=T(T-T')=(T-T')T=B_0D_{T'}T=B_0T_1D_{T'},
$$
that is $TB_0=B_0T_1$. Because $T^*$ is quasi-normal, $T_1^*$ will be quasi-normal too. Furthermore, since $T'$ commutes with $TT^*$, one can see, as above, that $TT^*B_0=B_0T_1T_1^*$. Therefore for each integer $n\ge 0$ we get
$$
T^{n+1}B_0B_0^*T^{*(n+1)}=T^nB_0T_1T_1^*B_0^*T^{*n}=T^nTT^*B_0B_0^*T^{*n}=TT^*T^nB_0B_0^*T^{*n},
$$
and by taking the adjoint $T^{n+1}B_0B_0^*T^{*(n+1)}=T^n B_0B_0^*T^{*n}TT^*$. Applying this to each of the summands in $TZT^*=\sum_{n=0}^{\infty} T^{n+1}B_0B_0^*T^{*(n+1)}$ we obtain the identities $TZT^*=TT^*Z=ZTT^*$. Now, the identity $B_0B_0^*=D_{T^*}^2Z$ follows from the fact that $Z$ satisfies the equation $Z=TZT^*+B_0B_0^*$. Also, $TT^*Z=ZTT^*$ implies that $ZD_{T^*}^2=D_{T^*}^2Z$ and hence $ZD_{T^*}=D_{T^*}Z$. This yields $B_0B_0^*=D_{T^*}^2Z=D_{T^*}ZD_{T^*}$, which (by Douglas's Lemma in \cite{D}) implies that $B_0=D_{T^*}W$ for some operator $W \in \mathcal{B}(\mathcal{D}_{T'},\mathcal{D}_{T^*})$. This relation, together with the first identity from (v), yields
\begin{equation}\label{ec23}
T=T'+D_{T^*}WD_{T'}.
\end{equation}
Thus $T$ is Shmul'yan equivalent to $T'$ (by Theorem \ref{thm:1.2}) and consequently (v) implies (iii).

The implication from (iii) to (iv) follows from \cite[Theorem 4.1]{S4} and Theorem \ref{thm:1.2} (and also from Theorem \ref{te31nou} below).

It was proved in \cite[Theorem 10]{Sh} and \cite[Theorem 4]{S3} (see also Remark \ref{re212} below) that the range of a contractive analytic function is contained in a Harnack part, hence the contractions $T$ and $T'$ satisfying (v) are Harnack equivalent. Therefore (iv) implies (ii), and (ii) obviously implies (i). This ends the proof of the theorem.
\end{proof}

We would like to emphasize the role played by commutativity properties in the proof of the implication (v) $\Rightarrow$ (iii).


 \begin{remark}\label{re211'}
 \rm
 Recall that $T\Prec T'$ if and only if $T^* \Prec T'^*$. Therefore a dual version of Theorem \ref{te210} holds true, assuming this time that $T$ is quasi-normal and that the corresponding commuting relations are satisfied, that is $TT'=T'T$, $TT'T'^*=T'T'^*T$, $T'T^*T=T^*TT'$.
 \end{remark}

\begin{remark}\label{re212}
\rm


By \cite[Theorem 4.1]{S4} and the proof of \cite[Theorem 4]{S3}, any Shmul'yan part is a union of ranges of $\B$-valued contractive analytic functions on $\De$. Thus, Theorem \ref{te210} asserts that, under some additional commutativity conditions, a contraction $T'$ which Harnack dominates a quasi-normal $T$ belongs together with $T$ to the range of an analytic functions on $\De$, which is contained in the Shmul'yan part of $T$.
\end{remark}

It has been proved in \cite{AST} that two commuting normal contractions in Harnack domination are, in fact, Harnack equivalent. Since by the Fuglede-Putnam theorem (see \cite{K}) a pair of commuting normal operators is doubly commuting, the result from \cite{AST} (and even more) is now directly obtained from Theorem~\ref{te210}. Another direct consequence of the same Theorem \ref{te210} is the following effective generalization of the result from \cite{AST}.

\begin{corollary}\label{co213}
Let $T$ and $T'$ be doubly commuting contractions on $\h$ such that $T$ is quasi-normal and it is Harnack dominated by $T'$. Then $T$ and $T'$ are Shmul'yan equivalent.
\end{corollary}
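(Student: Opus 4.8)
The plan is to deduce Corollary \ref{co213} directly from the dual form of Theorem \ref{te210} recorded in Remark \ref{re211'}, by checking that the doubly commuting hypothesis together with quasi-normality of $T$ supplies all the commutation relations required there. Recall that the dual version asks that $T$ be quasi-normal and that
\[
TT'=T'T, \qquad TT'T'^*=T'T'^*T, \qquad T'T^*T=T^*TT',
\]
and under these assumptions it asserts, among other things, that $T \Prec T'$ is equivalent to $T$ being Shmul'yan equivalent to $T'$.

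First I would note that $T$ is quasi-normal by hypothesis and that $TT'=T'T$ holds since doubly commuting operators in particular commute. Next, because $T$ commutes with both $T'$ and $T'^*$, it commutes with their product, so $T(T'T'^*)=(TT')T'^*=T'(TT'^*)=T'(T'^*T)=(T'T'^*)T$, which is the second identity. Finally, since the doubly commuting relation is symmetric in $T$ and $T'$, the operator $T'$ commutes with both $T$ and $T^*$, hence with $T^*T$; explicitly $T'(T^*T)=(T'T^*)T=(T^*T')T=T^*(T'T)=T^*(TT')=(T^*T)T'$, which is the third identity. Thus all hypotheses of the dual version of Theorem \ref{te210} are met, and since $T \Prec T'$ is assumed, Remark \ref{re211'} yields that $T$ and $T'$ are Shmul'yan equivalent.

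I expect no genuine obstacle here beyond the bookkeeping of commutators; the only point deserving a word of care is that one is invoking the adjoint form of Theorem \ref{te210}, which is legitimate because $T \Prec T'$ if and only if $T^* \Prec T'^*$ and because Shmul'yan equivalence is preserved under taking adjoints (if $B=A+D_{A^*}XD_A$ then $B^*=A^*+D_A X^* D_{A^*}$). Alternatively, one could bypass the dual statement entirely by rerunning the chain $\mathrm{(i)}\Rightarrow\mathrm{(v)}\Rightarrow\mathrm{(iii)}$ from the proof of Theorem \ref{te210} with the roles interchanged so that $T$ (rather than $T^*$) plays the part of the quasi-normal operator, but appealing to Remark \ref{re211'} is the most economical route.
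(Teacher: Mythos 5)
Your proposal is correct and follows exactly the route the paper intends: Corollary \ref{co213} is stated there as a direct consequence of Theorem \ref{te210} via its dual form in Remark \ref{re211'}, and your verification that the doubly commuting hypothesis yields the three required commutation relations, together with the observation that Shmul'yan equivalence passes to adjoints, supplies precisely the bookkeeping the paper leaves implicit.
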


Notice that this corollary is not true if the role of $T$ and $T'$ is interchanged, that is when $T'$ is quasi-normal and $T$ is not. For example, the null contraction Harnack dominates any contraction $T$ with $\|T\|=1$ and spectral radius less than one, but the null operator and $T$ are not Harnack equivalent by Foias's result in \cite{F}.

Returning to the assertions contained in Theorem \ref{te21}, we note that they essentially depend on the commutation of $T$ with $T'$. We illustrate this fact by the example below which discuss the role of the commutation properties. This example also shows that the inclusion $\h_i' \subset \h_i$ can be strict under Harnack equivalence (see also Remark \ref{re26}).

\begin{example}\label{ex36}
Let $\mathcal{E}$ be a complex Hilbert space. Consider, for $\lambda \in \C$ with $|\lambda |\le 1$, the weighted forward shift $T(\lambda)$ on $l^2_{\N}(\mathcal{E})$ given by
$$
T(\lambda) (e_0,e_1,...)=(0,\lambda e_0, e_1,...) , \quad e=(e_n)_{n\in \N} \in l^2_{\N}(\mathcal{E}).
$$
Then its adjoint is given by $T(\lambda)^*e=(\overline{\lambda}e_1,e_2,...)$,
and for $n\ge 1$ we have
$$
T(\lambda)^{*n}T(\lambda)^ne=(|\lambda|^2e_0,e_1,...)=S_{T(\lambda)}e.
$$

Clearly, $T(\lambda)$ is hyponormal and, for $\lambda \neq 0$, the operator $S_{T(\lambda)}$ is invertible. Therefore $T(\lambda)$ is of class $C_{1\cdot}$ and $T(\lambda)^*$ is of class $C_{0\cdot}$. Moreover, the operators $T(\lambda)$ are all Harnack equivalent for $|\lambda|<1$ and $T(\lambda)$ Harnack dominates the isometry $T(\mu)$ whenever $|\mu |=1$. These facts can be proved using the same arguments as in the proof of \cite[Example 6.1 and Proposition 6.4]{BST}. We remark that $T(\lambda)$ and $T(\mu)$ does not commute for such $\lambda$'s and $\mu$'s. This shows that the first assertion of Theorem \ref{te21} and Corollary \ref{co22nou} can be false without the commutativity condition.

On the other hand, $T=T(0)$ has the property that $S_T=S_T^2$, so the same remark holds concerning the second assertion of Theorem \ref{te21}. In addition, we have
$$
\n(I-S_T)=\n(I-S_{T(\lambda)})=\{(e_n)_{n\in \N} \in l^2_{\N}(\mathcal{E}):e_0=0\} \quad (0<|\lambda |<1).
$$
Hence
$$
\h_i=\h_i(T)=\n(I-S_T) \subsetneqq \h =\h_i(T(\mu)) \quad (|\mu |=1)
$$
and also
$$
\n(S_T)=\n(T)=\n(I-S_{T(\lambda)})^{\perp}=\{(e_0,0,...):e_0\in \mathcal{E}\} \quad (0<|\lambda |<1).
$$
In fact, we have the decomposition $T=S\oplus 0$ on $l^2_{\N}(\mathcal{E})=\n(I-S_T)\oplus \n(S_T)$, where $S$ is a shift on $\n(I-S_T)$. Hence $T$ is quasi-normal. Notice that this orthogonal decomposition of $l^2_{\N}(\mathcal{E})$ is not reducing for the $C_{10}$-contractions $T(\lambda)$ with $\lambda \neq 0$, even in the case $|\lambda |=1$ when $S_{T(\lambda)}=I$. Therefore the condition $S_T=S_T^2$ does not imply that $\n(S_{T'})$ is reducing for $T$, in general. This has to be compared with Theorems \ref{te21} and \ref{te210}).

Now let $T'\neq T(\lambda)$ be another contraction acting on $l^2_{\N}(\mathcal{E})$ which belongs to the Harnack part $\Delta(T)$ of $T$. Then $\n(I-S_{T'})=\n(I-S_T)$, hence
$$
\n(S_{T'})\subset \n(I-S_{T'})^{\perp} =\n(S_T).
$$
We distinguish two possible cases: either $\n(S_{T'})=\{0\}$ (that is, $T'$ is of class $C_{10}$ like $T$), or $\n(S_{T'})\neq \{0\}$. When $\textrm{dim} \, \mathcal{E} >1$ and $S_{T'}\neq S_{T'}^2$ we have $\n(S_{T'})\neq \n(S_T)$. In this case $T'^*|_{\n(S_T)} \echiH 0$; therefore
$
\|P_{\n(S_T)}T'|_{\n(S_T)}\|<1.
$

Suppose now that $\mathcal{E}=\C$. If $\n(S_{T'})\neq \{0\}$, then $\n(S_{T'})=\n(S_T)$, so $S_{T'}=S_{T'}^2$. In this case, any contraction $T'$ in the Harnack part $\Delta(T)$ of $T$ is either a $C_{10}$-contraction, or we have $T'=S\oplus Q'$ with $S=T|_{\n(I-S_T)}$ an isometry and $\|Q'\|<1$ on $\n(S_{T})$. Then $S_{T'}=S_T$, therefore some assertions of Theorem \ref{te21} can be true without the commutativity condition.

Notice also that in the Harnack part $\Delta(T)$ of $T=T(0)$ the only quasi-normal partial isometry is $T$ itself. The other contractions in $\Delta(T)$ are either hyponormal (even subnormal) of class $C_{10}$
(as $T(\lambda)$ with $0<|\lambda |<1$), or do not belong to these classes (as $T'$ before).
Also, notice that the reducing isometric part is not preserved by the Harnack equivalence, in general.
Indeed, $\n(I-S_T)$ is only invariant for $T(\lambda)$ when $\lambda \neq 0$. In fact, the reducing isometric parts of $T=T(0)$ and $T'=T(\lambda)$ with $\lambda \neq 0$ are given by $\h_i=\n(I-S_T)$ and $\h_i'=\{0\}$.

\end{example}

\medskip

\section{Remarks on Shmul'yan and Harnack equivalences}\label{sec:3}
\medskip

We consider now the Shmul'yan equivalence of contractions acting between two separable Hilbert spaces $\e$ and $\e'$. Our first aim is to give a generalization in this context of a result from \cite{S4} obtained for contractions acting on same space.

\bigskip

{\bf Shmul'yan parts and the Kobayashi pseudo-distance.}
Recall that the {\it hyperbolic metric} on $\De$ is defined by
$$
\delta (z, \mu)=\tanh ^{-1} (|\frac{z-\mu }{1-\overline{\mu}z}|), \quad z,\mu \in \De.
$$
According to the terminology of \cite{S4} (see also \cite{Di}), the \emph{Kobayashi pseudo-distance} on $\mathcal{B}_1(\e,\e')$ is the mapping
$$\delta_K:\mathcal{B}_1(\e,\e') \times \mathcal{B}_1(\e,\e') \to [0,\infty]; \quad \delta_K(T,T'):= \inf_{\lambda_j,F_j} \sum_{j=1}^n \delta (0,\lambda _j) .$$
Here the infimum is taken over all finite systems $\{z_j\}_1^n\subset \De$ for which there exists $\{F_j\}_1^n \subset H_1^{\infty} (\e,\e')$ such that
    $$
    F_1(0)=T, \quad F_j(\lambda _j)=F_{j+1}(0) \quad {\rm for} \quad 1\le j\le n-1, \quad F_n(\lambda _n)=T'.
    $$
The Kobayashi pseudo-distance takes the value $\infty$ when such $\{z_j\}$'s and $\{F_j\}$'s do not exist.  This pseudo-distance $\delta_K$ has all the properties of a metric except the fact that $\delta_K(T,T')=0$ implies $T=T'$.

\begin{theorem}\label{te31nou}
Let $T, T'$ be two contractions from $\e$ into $\e'$. The following are equivalent:
\begin{itemize}
\item[(i)] $T$ and $T'$ are in the same Shmul'yan part;
\item[(ii)] There exists $\lambda _0\in \D$ and $F\in H_1^{\infty} (\e,\e')$ such that $F(0)=T$ and $F(\lambda_0)=T'$;
\item[(iii)] $\delta_K(T,T')< \infty$.
\end{itemize}
\end{theorem}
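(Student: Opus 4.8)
The plan is to prove the cyclic chain of implications $(ii)\Rightarrow(iii)\Rightarrow(i)\Rightarrow(ii)$, the only new ingredient being a device reducing the two-space situation to the single-space results recalled above. The implication $(ii)\Rightarrow(iii)$ is immediate: if $F\in H_1^{\infty}(\e,\e')$ satisfies $F(0)=T$ and $F(\lambda_0)=T'$, then the one-term system $n=1$, $\lambda_1=\lambda_0$, $F_1=F$ is admissible in the definition of $\delta_K$, whence $\delta_K(T,T')\le\delta(0,\lambda_0)<\infty$.

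For the remaining implications I would set $\h=\e\oplus\e'$ and, for a contraction $S\in\mathcal{B}_1(\e,\e')$, introduce the contraction $\widehat S\in\mathcal{B}_1(\h)$ given by $\widehat S=\bigl(\begin{smallmatrix}0&0\\ S&0\end{smallmatrix}\bigr)$, so that $\|\widehat S\|=\|S\|$. A direct computation yields $I_{\h}-\widehat S^{\,*}\widehat S=\bigl(\begin{smallmatrix}D_S^2&0\\0&I_{\e'}\end{smallmatrix}\bigr)$ and $I_{\h}-\widehat S\widehat S^{\,*}=\bigl(\begin{smallmatrix}I_{\e}&0\\0&D_{S^*}^2\end{smallmatrix}\bigr)$, hence $D_{\widehat S}=\bigl(\begin{smallmatrix}D_S&0\\0&I_{\e'}\end{smallmatrix}\bigr)$, $D_{\widehat S^{\,*}}=\bigl(\begin{smallmatrix}I_{\e}&0\\0&D_{S^*}\end{smallmatrix}\bigr)$, and $\D_{\widehat S}=\D_S\oplus\e'$, $\D_{\widehat S^{\,*}}=\e\oplus\D_{S^*}$. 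Writing $Z\in\mathcal{B}(\D_{\widehat S},\D_{\widehat S^{\,*}})$ in the associated $2\times2$ block form, the identity $\widehat R=\widehat S+D_{\widehat S^{\,*}}ZD_{\widehat S}$ forces the $(1,1)$, $(1,2)$ and $(2,2)$ blocks of $Z$ to vanish (using that $D_S$ has dense range in $\D_S$ and that $D_{S^*}$ is injective on $\D_{S^*}$), while the $(2,1)$ block reads $R-S=D_{S^*}Z_{21}D_S$ with $Z_{21}\in\mathcal{B}(\D_S,\D_{S^*})$; conversely, any such $Z_{21}$ produces an admissible $Z$. Thus $\widehat R\PrecS\widehat S$ in $\mathcal{B}_1(\h)$ if and only if $R\PrecS S$ in $\mathcal{B}_1(\e,\e')$, so $R$ and $S$ are Shmul'yan equivalent in $\mathcal{B}_1(\e,\e')$ precisely when $\widehat R$ and $\widehat S$ are so in $\mathcal{B}_1(\h)$. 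Moreover, for $F\in H_1^{\infty}(\e,\e')$ the formula $\widehat F(z)=\bigl(\begin{smallmatrix}0&0\\ F(z)&0\end{smallmatrix}\bigr)$ defines a $\mathcal{B}(\h)$-valued contractive analytic function on $\De$; conversely, compressing any $\mathcal{B}(\h)$-valued contractive analytic function $G$ to its lower-left corner, $z\mapsto P_{\e'}G(z)|_{\e}$, produces an element of $H_1^{\infty}(\e,\e')$.

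With this reduction in hand, $(iii)\Rightarrow(i)$ runs as follows: given an admissible chain $F_1,\dots,F_n$ and $\lambda_1,\dots,\lambda_n$ realizing $\delta_K(T,T')<\infty$, each $\widehat{F_j}$ is a $\mathcal{B}(\h)$-valued contractive analytic function, so by Remark \ref{re212} its range lies in a single Shmul'yan part; hence $\widehat{F_j}(0)$ and $\widehat{F_j}(\lambda_j)$ are Shmul'yan equivalent, and therefore so are $F_j(0)$ and $F_j(\lambda_j)$. Since Shmul'yan equivalence is an equivalence relation, chaining the equalities $T=F_1(0)$, $F_j(\lambda_j)=F_{j+1}(0)$ and $F_n(\lambda_n)=T'$ places $T$ and $T'$ in the same Shmul'yan part. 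For $(i)\Rightarrow(ii)$: if $T$ and $T'$ are Shmul'yan equivalent, then so are $\widehat T$ and $\widehat{T'}$ on the single space $\h$, whence by \cite[Theorem 4.1]{S4} (together with Theorem \ref{thm:1.2}) there are $z_0\in\De$ and a $\mathcal{B}(\h)$-valued contractive analytic $G$ with $G(0)=\widehat T$ and $G(z_0)=\widehat{T'}$; its lower-left corner $F(z)=P_{\e'}G(z)|_{\e}$ then lies in $H_1^{\infty}(\e,\e')$ and satisfies $F(0)=T$, $F(z_0)=T'$, which is $(ii)$.

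The genuinely non-formal part of the argument is the block computation underlying the reduction device: correctly identifying $\D_{\widehat S}$ and $\D_{\widehat S^{\,*}}$ and checking that the three diagonal/upper-right blocks of $Z$ are forced to be zero. Everything else is either a one-line unravelling of the definition of $\delta_K$ or a direct appeal to the single-space statements already available, namely Remark \ref{re212} and \cite[Theorem 4.1]{S4}. An alternative would be to re-run the proof of \cite[Theorem 4.1]{S4} directly in the two-space setting, but the reduction via $\widehat{(\cdot)}$ seems cleaner.
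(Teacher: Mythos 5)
Your proposal is correct and follows essentially the same route as the paper: the same embedding of $\mathcal{B}_1(\e,\e')$ into $\mathcal{B}_1(\e\oplus\e')$ as a lower-left corner, the same computation of the defect operators of the embedded contractions, the same appeal to \cite[Theorem 4.1]{S4} followed by compression back to $P_{\e'}G(\cdot)|_{\e}$, and the same use of the fact that the range of a Schur class function lies in a single Shmul'yan part for (iii)$\Rightarrow$(i). The only (harmless) differences are that you establish the full two-way equivalence $\widehat R\PrecS\widehat S\Leftrightarrow R\PrecS S$ where the paper only needs one direction, and that for (iii)$\Rightarrow$(i) the paper invokes \cite[Theorem 10]{Sh} directly in the two-space setting rather than routing through the embedding.
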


\begin{proof}
Let $T,T' \in \mathcal{B}_1 (\e,\e')$ be Shmul'yan equivalent. Define the contractions $\widetilde{T}, \widetilde{T}'$ on $\e \oplus \e'$ by
$$
\widetilde{T}(e\oplus e')=0\oplus Te, \quad \widetilde{T}'(e\oplus e') =0 \oplus T'e, \quad e\in \e, \quad e'\in \e'.
$$
Then $D_{\widetilde{T}^*}=I_{\e}\oplus D_{T^*}$ and $D_{\widetilde{T}'}=D_{T'}\oplus I_{\e'}$ on $\e \oplus \e'$.

Since $T$ and $T'$ are Shmul'yan equivalent, by Theorem \ref{thm:1.2} there exists an operator $X\in \mathcal{B}(\D_{T'}, \D_{T^*})$ such that $T=T'+D_{T^*}XD_{T'}$. Defining $\widetilde{X} \in \mathcal{B}(\e \oplus \e')$ by the block matrix
$$
\widetilde{X}=
\begin{pmatrix}
0 & 0\\
X & 0
\end{pmatrix}
$$
we have
\begin{eqnarray*}
D_{\widetilde{T}^*} \widetilde{X}D_{\widetilde{T}'}
&=&
\begin{pmatrix}
I_{\e} & 0\\
0 & D_{T^*}
\end{pmatrix}
\begin{pmatrix}
0 & 0\\
X & 0
\end{pmatrix}
\begin{pmatrix}
D_{T'} & 0\\
0 & I_{\e'}
\end{pmatrix}\\
&=&
\begin{pmatrix}
0 & 0\\
D_{T^*}X D_{T'} &0
\end{pmatrix}
=
\begin{pmatrix}
0 & 0\\
T-T' & 0
\end{pmatrix}\\
&=& \widetilde{T}-\widetilde{T}'.
\end{eqnarray*}
Denoting $Y=P_{\D_{\widetilde{T}^*}}\widetilde{X} |_{\D_{\widetilde{T}'}}$ it follows that $\widetilde{T}=\widetilde{T}'+D_{\widetilde{T}^*}YD_{\widetilde{T}'}$. Thus $\widetilde{T}$ and $\widetilde{T}'$ are Shmul'yan equivalent in $\mathcal{B}_1(\e\oplus \e')$. Then by \cite[theorem 4.1]{S4} there exists $\lambda _0\in \De$ and $G\in H_1^{\infty} (\e \oplus \e')$ such that $G(0)=\widetilde{T}$, $G(\lambda _0)=\widetilde{T}'$. Now the analytic function $F: \De \to \mathcal{B}_1(\e,\e')$ given by
$$
F(\lambda)=P_{\e'}G(\lambda)|_{\e}, \quad \lambda \in \De
$$
belongs to $H_1^{\infty}(\e,\e')$ and satisfies
$$
F(0)=P_{\e'}\widetilde{T}|_{\e}=T, \quad F(\lambda_0)=P_{\e'}\widetilde{T}'|_{\e}=T'.
$$
So, we proved that (i) implies (ii). Obviously, (ii) implies (iii).

Assume next (iii), \emph{i.e.} $\delta_K(T,T')<\infty$. Therefore there exist $\{\lambda_j\}_1^n\subset \De$ and $\{F_j\}_1^n\subset H_1^{\infty} (\e,\e')$ as in (iii). But by \cite[Theorem 10]{Sh} the range $G(\De)$ of any function $G\in H_1^{\infty}(\e,\e')$ is contained in a Shmul'yan part. Hence the contractions $F_j(0)$ and $F_j(\lambda_j)$ are Shmul'yan equivalent, for $1\le j\le n$, in particular $T=F_1(0)$ and $T'=F_n(z_n)$ are so. We have showed that (iii) implies (i) and this ends the proof.
\end{proof}

\begin{remark}\label{re32nou}
 Theorem \ref{te31nou} shows that the Shmul'yan parts of $\mathcal{B}_1(\e,\e')$ coincide to the class of equivalence induced by the equivalence relation : $T \echiK T'$ whenever $\delta_K (T,T')<\infty$. Also, in \cite[Theorem 5.1]{S4} it was proved that $\delta_K$ is a true metric on each Shmul'yan (Kobayashi) part of $\mathcal{B}_1(\e)$ and that the $\delta_K$-topology is stronger than the topology of hyperbolic metric on such parts of $\mathcal{B}_1(\e)$. An interesting problem is to see if $\delta_K$ is a complete metric on the Shmul'yan parts of $\mathcal{B}_1(\e,\e')$, as it is the case of the hyperbolic metric on the Harnack parts of $\mathcal{B}_1(\e)$ (see \cite{SV}).
\end{remark}

Returning to Theorem \ref{te31nou} we have the following
\begin{corollary}\label{co33nou}
Let $T,T' \in \mathcal{B}_1(\e,\e')$ be two contractions which, with respect to two orthogonal decompositions $\e=\e_0 \oplus \e_1$ and $\e'=\e'_0 \oplus \e'_1$, have the block matrices
\begin{equation}\label{eq31nou}
T=
\begin{pmatrix}
T_0 & T_1\\
T_2 & T_3
\end{pmatrix},
\quad
T'=
\begin{pmatrix}
T_0' & T_1'\\
T_2' & T_3'
\end{pmatrix}.
\end{equation}
Let $C_0,C_0' \in \mathcal{B}_1(\e_0,\e_0'\oplus \e'_1)$ and $C_1,C_1' \in \mathcal{B}_1(\e_1, \e'_0\oplus \e'_1)$ be the column operators of $T,T'$ and let $L_0,L'_0 \in \mathcal{B}_1(\e_0\oplus \e_1, \e'_0)$ and $L_1,L'_1\in \mathcal{B}_1 (\e_0\oplus \e_1, \e'_1)$ be the row operators of $T,T'$, respectively.

If $T$ and $T'$ are Shmul'yan equivalent then $C_j$ and $C'_j$, respectively $L_j$ and $L'_j$ are Shmul'yan equivalent, for $j=0,1$. Moreover, in this case $T_j$ and $T'_j$ are Shmul'yan equivalent, for $j=0,1,2,3$.
\end{corollary}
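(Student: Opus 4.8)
The plan is to derive everything from the function‑theoretic description of Shmul'yan equivalence in Theorem \ref{te31nou}, using the elementary fact that a compression of a contractive analytic function is again a contractive analytic function. Since $T$ and $T'$ are Shmul'yan equivalent, the implication (i)$\Rightarrow$(ii) of Theorem \ref{te31nou} provides $\lambda_0\in\De$ and $F\in H_1^{\infty}(\e,\e')$ with $F(0)=T$ and $F(\lambda_0)=T'$. All the assertions will then follow by restricting the domain and/or compressing the range of $F$ and re-applying Theorem \ref{te31nou}.

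First I would treat the column and row operators. Viewing $C_0\in\mathcal{B}_1(\e_0,\e')$ as the restriction $T|_{\e_0}$, set $F_0(\lambda):=F(\lambda)|_{\e_0}$: each $F_0(\lambda)$ is a contraction from $\e_0$ into $\e'$ (a restriction of the contraction $F(\lambda)$), the map $\lambda\mapsto F_0(\lambda)$ is analytic (it is $F$ precomposed with the fixed bounded inclusion $\e_0\hookrightarrow\e$), and $F_0(0)=C_0$, $F_0(\lambda_0)=C_0'$; hence $F_0\in H_1^{\infty}(\e_0,\e')$ joins $C_0$ to $C_0'$ and the implication (ii)$\Rightarrow$(i) of Theorem \ref{te31nou} gives that $C_0$ and $C_0'$ are Shmul'yan equivalent. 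Replacing $\e_0$ by $\e_1$ yields the equivalence of $C_1$ and $C_1'$. Dually, viewing $L_0\in\mathcal{B}_1(\e,\e'_0)$ as $P_{\e'_0}T$, the function $G_0(\lambda):=P_{\e'_0}F(\lambda)$ lies in $H_1^{\infty}(\e,\e'_0)$ (compressing the range of a contraction keeps it a contraction, and composing with the fixed projection $P_{\e'_0}$ preserves analyticity) and satisfies $G_0(0)=L_0$, $G_0(\lambda_0)=L_0'$, so $L_0$ and $L_0'$ are Shmul'yan equivalent, and likewise $L_1$ and $L_1'$.

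Finally, for each block $T_j$ I would combine the two operations: for instance $T_0=P_{\e'_0}T|_{\e_0}$, and $H(\lambda):=P_{\e'_0}F(\lambda)|_{\e_0}\in H_1^{\infty}(\e_0,\e'_0)$ joins $T_0$ to $T_0'$; ranging over the two choices of row projection and the two choices of domain inclusion gives the Shmul'yan equivalence of $T_j$ and $T_j'$ for $j=0,1,2,3$ (alternatively one may read $T_j$ off as a column operator of the already-treated row operator $L_j$). There is no real obstacle here; the only point deserving a line of justification is the routine remark that $P_{\e'_0}F(\lambda)|_{\e_0}$ is a contraction whenever $F(\lambda)$ is and that restricting the domain or compressing the range of the $H^{\infty}$‑valued map $F$ leaves it analytic, so that Theorem \ref{te31nou} applies verbatim to the compressed functions.
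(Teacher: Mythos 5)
Your proposal is correct and follows exactly the route the paper intends: the paper gives no written proof but explicitly states that Corollary \ref{co33nou} is obtained as a consequence of Theorem \ref{te31nou}, and your argument (produce $F$ with $F(0)=T$, $F(\lambda_0)=T'$, then restrict the domain and/or compress the range, noting that this preserves contractivity and analyticity) is precisely that derivation. The only remark worth adding is that the same conclusion also follows from Theorem \ref{thm:1.2}(iii), since $(1-\varepsilon)T+\varepsilon T'$ being a contraction passes to all compressions --- the alternative the paper itself points out after Corollary \ref{co34nou}.
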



The following fact is an analogue of the corresponding result for Harnack domination (see \cite[Lemma 2.2]{BST}).

\begin{corollary}\label{co34nou}
Let $T,T' \in \mathcal{B}_1(\e)$ be Shmul'yan equivalent. If $\e_0\subset \e$ is a closed subspace invariant to both $T$ and $T'$, then $T|_{\e_0}$ and $T'|_{\e_0}$ are Shmul'yan equivalent.
\end{corollary}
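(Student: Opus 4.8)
The plan is to bypass the interpolating-function description of Shmul'yan equivalence (Theorem~\ref{te31nou}) and argue instead through the affine-family characterization of Shmul'yan domination in Theorem~\ref{thm:1.2}, parts (i)$\Leftrightarrow$(iii). The reason is that the natural first attempt --- take $F\in H_1^{\infty}(\e,\e)$ with $F(0)=T$, $F(\lambda_0)=T'$ and restrict it to $\e_0$ --- does not work, since $F(\lambda)$ need not leave $\e_0$ invariant for $\lambda\ne 0,\lambda_0$. By contrast, for any fixed scalar $\varepsilon$ the affine combination $(1-\varepsilon)T+\varepsilon T'$ does leave invariant every subspace invariant for both $T$ and $T'$, and the restriction of a contraction to an invariant subspace is again a contraction; this is the whole point.

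Concretely, I would proceed as follows. Shmul'yan equivalence of $T$ and $T'$ means $T'\PrecS T$ and $T\PrecS T'$; applying Theorem~\ref{thm:1.2}~(i)$\Leftrightarrow$(iii) to each of these, we obtain $r_1,r_2>0$ such that $(1-\varepsilon)T+\varepsilon T'\in\mathcal{B}_1(\e)$ for all $|\varepsilon|\le r_1$ and $(1-\varepsilon)T'+\varepsilon T\in\mathcal{B}_1(\e)$ for all $|\varepsilon|\le r_2$. Now fix $\varepsilon$ with $|\varepsilon|\le r_1$. The operator $(1-\varepsilon)T+\varepsilon T'$ is a contraction on $\e$ and leaves $\e_0$ invariant, so its restriction to $\e_0$ is a contraction; this restriction is exactly $(1-\varepsilon)\,T|_{\e_0}+\varepsilon\,T'|_{\e_0}$. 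Hence $(1-\varepsilon)\,T|_{\e_0}+\varepsilon\,T'|_{\e_0}\in\mathcal{B}_1(\e_0)$ for every $|\varepsilon|\le r_1$, and the implication (iii)$\Rightarrow$(i) of Theorem~\ref{thm:1.2}, applied this time on $\e_0$ (to the contractions $T|_{\e_0}$ and $T'|_{\e_0}$), gives $T'|_{\e_0}\PrecS T|_{\e_0}$. Running the same argument with $r_2$ and the roles of $T$ and $T'$ interchanged yields $T|_{\e_0}\PrecS T'|_{\e_0}$, so $T|_{\e_0}$ and $T'|_{\e_0}$ are Shmul'yan equivalent.

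I do not expect a genuine obstacle here: the only thing to get right is the choice of characterization of Shmul'yan domination. One could instead attempt a compression argument from condition (c) of Theorem~\ref{thm:1.2}, writing $I-T^*T'=D_T\widetilde{Y}D_{T'}$ and compressing to $\e_0$, but then one must relate the defect operator $D_{T|_{\e_0}}$ on $\e_0$ to the compression $P_{\e_0}D_T|_{\e_0}$, which is awkward because $\e_0$ is not assumed reducing; the affine-family route avoids this entirely, which is why I would use it. This is the exact analogue, and essentially the same proof, as \cite[Lemma~2.2]{BST} for Harnack domination.
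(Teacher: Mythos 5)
Your proof is correct, but it is not the route the paper takes: the paper states this corollary (together with Corollary \ref{co33nou}) as a consequence of Theorem \ref{te31nou}, and only remarks in passing that it ``can be also derived from Theorem \ref{thm:1.2} (iii)'' --- which is exactly the derivation you carry out. Your stated reason for abandoning the interpolating-function route is, however, not quite right: it is true that $F(\lambda)$ need not leave $\e_0$ invariant for $\lambda\neq 0,\lambda_0$, but one does not restrict, one \emph{compresses}. Setting $G(\lambda)=P_{\e_0}F(\lambda)|_{\e_0}$ gives an element of $H_1^{\infty}(\e_0)$ (compressions of contractions are contractions, and compression preserves analyticity), and the invariance of $\e_0$ under $T$ and $T'$ is used only at the two points $\lambda=0$ and $\lambda=\lambda_0$, where it forces $G(0)=T|_{\e_0}$ and $G(\lambda_0)=T'|_{\e_0}$; Theorem \ref{te31nou} then concludes. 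This compression trick is precisely what underlies Corollary \ref{co33nou} as well. As for what each approach buys: your affine-family argument via Theorem \ref{thm:1.2} (i)$\Leftrightarrow$(iii) is more elementary, is the exact analogue of \cite[Lemma 2.2]{BST} for Harnack domination as you note, and does not require the separability of $\e$ that the Schur-class machinery of Theorem \ref{te31nou} presupposes; the paper's route has the virtue of illustrating that both corollaries fall out of the single function-theoretic characterization of Shmul'yan parts.
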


Notice that the assertions from the above corollaries can be also derived from Theorem \ref{thm:1.2} (iii); see also \cite[Corollary 1.5]{H}. Here we reobtained these results as consequences of Theorem \ref{te31nou}.

We now see that a partial converse assertion of that in Corollary \ref{co33nou} holds in some particular cases, even without the separability condition on $\e$ and $\e'$.

\begin{proposition}\label{pr35nou}
Let $T,T' \in \mathcal{B}_1(\e,\e')$ with the block matrices \eqref{eq31nou} satisfying the conditions
\begin{equation}\label{eq32nou}
T_0^*T_1+T_2^*T_3=T_0^{'*}T_1'+T_2^{'*}T_3'=0, \quad T_0^*T_1'+T_2^*T_3'=0, \quad T_1^*T_0'+T_3^*T_2'=0.
\end{equation}
If the column matrices $C_0,C_0'$ and $C_1,C_1'$ in \eqref{eq31nou} are Shmul'yan equivalent, respectively, then $T$ and $T'$ are Shmul'yan equivalent.
\end{proposition}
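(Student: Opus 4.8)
The plan is to read the four relations \eqref{eq32nou} as orthogonality relations among the column operators, use them to make the relevant operator products block diagonal with respect to $\e=\e_0\oplus\e_1$ and $\e'=\e_0'\oplus\e_1'$, and then glue together the Shmul'yan equivalences of the diagonal blocks by means of Theorem \ref{thm:1.2}. First I would write $T$ and $T'$ as row operators $T=\begin{pmatrix}C_0 & C_1\end{pmatrix}$, $T'=\begin{pmatrix}C_0' & C_1'\end{pmatrix}$ from $\e_0\oplus\e_1$ into $\e'$, so that $T^*=\begin{pmatrix}C_0^*\\ C_1^*\end{pmatrix}$ and likewise for $T'$; note that each of $C_0,C_1,C_0',C_1'$ is itself a contraction, being a restriction of $T$ (resp. $T'$) to a summand. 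In this notation \eqref{eq32nou} says exactly
\[
C_0^*C_1=0,\qquad C_0^{'*}C_1'=0,\qquad C_0^*C_1'=0,\qquad C_0^{'*}C_1=0,
\]
the last relation being the adjoint of $T_1^*T_0'+T_3^*T_2'=0$.

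The first two relations give $T^*T=\operatorname{diag}(C_0^*C_0,\,C_1^*C_1)$ and $T^{'*}T'=\operatorname{diag}(C_0^{'*}C_0',\,C_1^{'*}C_1')$. Since the functional calculus preserves block-diagonal structure, $D_T=\operatorname{diag}(D_{C_0},D_{C_1})$ and $D_{T'}=\operatorname{diag}(D_{C_0'},D_{C_1'})$, whence $\D_T=\D_{C_0}\oplus\D_{C_1}$ and $\D_{T'}=\D_{C_0'}\oplus\D_{C_1'}$. The remaining two relations give $T^*T'=\operatorname{diag}(C_0^*C_0',\,C_1^*C_1')$, so that $I_{\e}-T^*T'=\operatorname{diag}(I_{\e_0}-C_0^*C_0',\,I_{\e_1}-C_1^*C_1')$. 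Now, since $C_0$ and $C_0'$ (resp. $C_1$ and $C_1'$) are Shmul'yan equivalent, Theorem \ref{thm:1.2}(c) yields $\widetilde{Y}_0\in\mathcal{B}(\D_{C_0'},\D_{C_0})$ and $\widetilde{Y}_1\in\mathcal{B}(\D_{C_1'},\D_{C_1})$ with $I_{\e_0}-C_0^*C_0'=D_{C_0}\widetilde{Y}_0D_{C_0'}$ and $I_{\e_1}-C_1^*C_1'=D_{C_1}\widetilde{Y}_1D_{C_1'}$. Setting $\widetilde{Y}=\operatorname{diag}(\widetilde{Y}_0,\widetilde{Y}_1)\in\mathcal{B}(\D_{T'},\D_T)$ and multiplying out the block-diagonal matrices gives $I_{\e}-T^*T'=D_T\widetilde{Y}D_{T'}$, and by Theorem \ref{thm:1.2}(c), applied this time to the pair $T,T'$, this means that $T$ and $T'$ are Shmul'yan equivalent.

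I do not anticipate a serious obstacle: the whole point is that \eqref{eq32nou} is precisely the assertion that the ranges of $C_0,C_0'$ are orthogonal to the ranges of $C_1,C_1'$, which decouples the problem into the two diagonal blocks, and the hard work (finding the $\widetilde{Y}_j$) is already done by Theorem \ref{thm:1.2}. The only thing requiring attention is the routine bookkeeping with the defect spaces (the usual identification of $D_T$ with an operator between $\e$ and $\D_T$) and checking that each of the four relations of \eqref{eq32nou} is used exactly where stated. An equally short alternative avoids defect operators and uses characterization (iii) of Theorem \ref{thm:1.2}: the same block orthogonality shows that for sufficiently small $|\varepsilon|$ the two columns $(1-\varepsilon)C_0+\varepsilon C_0'$ and $(1-\varepsilon)C_1+\varepsilon C_1'$ of $(1-\varepsilon)T+\varepsilon T'$ have orthogonal ranges and are each contractions (because $C_j$ and $C_j'$ are Shmul'yan equivalent), hence $(1-\varepsilon)T+\varepsilon T'$ is a contraction by the Pythagorean identity; the symmetric argument for $(1-\varepsilon)T'+\varepsilon T$ then gives $T\PrecS T'$ and $T'\PrecS T$, i.e. $T$ and $T'$ are Shmul'yan equivalent.
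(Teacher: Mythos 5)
Your main argument is correct and is essentially the paper's own proof: both use the first two relations of \eqref{eq32nou} to split $D_T=D_{C_0}\oplus D_{C_1}$ and $D_{T'}=D_{C_0'}\oplus D_{C_1'}$, the last two to make $I-T^*T'$ block diagonal, and then assemble $\widetilde{Y}=\widetilde{Y}_0\oplus\widetilde{Y}_1$ and invoke Theorem \ref{thm:1.2}. Your sketched alternative via criterion (iii) and the Pythagorean identity for the orthogonal columns is also sound, but the defect-operator route is exactly what the paper does.
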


\begin{proof}
Let us assume that the contractions $C_0$ and $C_0'$, respectively $C_1$ and $C_1'$, are Shmul'yan equivalent, where $C_0=\begin{pmatrix} T_0 & T_2 \end{pmatrix}^{{\rm tr}}$, $C'_0=\begin{pmatrix} T'_0 & T'_2 \end{pmatrix}^{{\rm tr}}$, $C_1=\begin{pmatrix} T_1 & T_3 \end{pmatrix}^{{\rm tr}}$, $C_1'=\begin{pmatrix} T_1' & T_3' \end{pmatrix}^{{\rm tr}}$. Then $D_{C_0}^2=I-T_0^*T_0-T_2^*T_2$, $D_{C_1}^2=I-T_1^*T_1-T_3^*T_3$ and $D_{C_0'}^2$, $D_{C_1'}^2$ have similar expressions. Now using \eqref{eq31nou} and the first two conditions from \eqref{eq32nou} we get
$$
D_T=D_{C_0}\oplus D_{C_1}, \quad D_{T'}=D_{C_0'}\oplus D_{C_1'}.
$$

Since $C_0$ and $C_0'$ are Shmul'yan equivalent, by Theorem \ref{thm:1.2} there exists $Y_0 \in \mathcal{B}(\D_{C_0'}, \D_{C_0})$ such that $I-C_0^*C_0'=D_{C_0}Y_0D_{C_0'}$ and similarly, there exists $Y_1\in \mathcal{B}(\D_{C_1'}, \D_{C_1})$ such that $I-C_1^*C_1'=D_{C_1}Y_1D_{C_1'}$. Taking $Y\in \mathcal{B}(\D_T, \D_{T'})$ given by $Y=Y_0\oplus Y_1$, acting from $\D_{T'}=\D_{C_0'}\oplus \D_{C_1'}$ into $\D_T=\D_{C_0} \oplus \D_{C_1}$, and expressing the block matrices below on the same decompositions of $\e$ and $\e'$ as in \eqref{eq31nou}, we get
\begin{eqnarray*}
D_TYD_{T'}&=&
\begin{pmatrix}
D_{C_0}Y_0D_{C_0'} & 0\\
0 & D_{C_1}Y_1D_{C_1'}
\end{pmatrix}
=
\begin{pmatrix}
I-C_0^*C_0' & 0\\
0 & I-C_1^*C_1'
\end{pmatrix}\\
&=&
\begin{pmatrix}
I- T_0^*T_0'-T_2^*T_2' & 0\\
0 & I-T_1^*T_1'-T_3^*T_3'
\end{pmatrix}.
 \end{eqnarray*}
On the other hand, using the last two conditions in \eqref{eq32nou} we obtain
$$
I-T^*T'=
\begin{pmatrix}
I- T_0^*T_0'-T_2^*T_2' & -(T_0^*T_1'+T_2^*T_3')\\
-(T_1^*T_0'+T_3'T_2') & I-T_1^*T_1'-T_3^*T_3'
\end{pmatrix}
=D_TYD_{T'}.
$$
Using Theorem \ref{thm:1.2} we infer that $T$ and $T'$ are Shmul'yan equivalent.\end{proof}

A first application of this proposition refers to the decomposition of an operator $T\in \mathcal{B}_1(\e,\e')$ into a unitary part and a pure part; see \cite[Ch. V]{FF}. More specifically, the operator $T$ can be written as
$$
T=U \oplus Q : \e=\n(D_T)\oplus \mathcal{D}_T \to \e'=\n(D_{T^*})\oplus \mathcal{D}_{T^*},
$$
where $U \in \mathcal{B}(\n(D_T),\n(D_{T^*}))$ is unitary, and $Q\in \n(\mathcal{D}_T,\mathcal{D}_{T^*})$ is a {\it pure contraction}, that is $\|Qh\|<\|h\|$ for all nonzero $h\in \h$. If $T' =U' \oplus Q' \in \mathcal{B}_1 (\e,\e')$ with $U'$ the unitary part and $Q'$ the pure part of $T'$ and with $\D_T=\D_{T'}$, $\D_{T^*}=\D_{T^{'*}}$ (necessary conditions for the Shmul'yan equivalence), then $T$ and $T'$ satisfy trivially the conditions \eqref{eq32nou}. So, by Proposition \ref{pr35nou} the Shmul'yan equivalence of $T$ and $T'$ means $U=U'$ and that $Q$ and $Q'$ are Shmul'yan equivalent in $\mathcal{B}_1(\D_T,\D_{T^*})$. We reobtain in this way \cite[Corollary 2]{Sh}.

In order to state the second application, we recall that $\n(I-S_T)$ is an invariant subspace for each contraction $T\in \mathcal{B}(\h)$, and $T|_{\n(I-S_T)}$ is an isometry.
For another contraction $T'$ which belongs to the Shmul'yan part of $T$ it is necessary that $\n(I-S_T)$ is also invariant (even $\n(I-S_T)=\n(I-S_{T'})$ because $T'$ will be in the Harnack part of $T$) and $T'=T$ on $\n(I-S_T)$; see \cite[Lemma 5.1]{BST}. More precisely we have the following

\begin{corollary}\label{co36}
Let $T,T'\in \mathcal{B}_1(\h)$ be such that $\h_0=\n(I-S_T)$ is invariant for $T'$. For $T,T' : \h_0 \oplus \h_0^{\perp} \mapsto\h_0 \oplus \h_0^{\perp}$, consider the block matrices
\begin{equation}\label{eq33nou}
T=
\begin{pmatrix}
V & R\\
0 & Q
\end{pmatrix},
\quad
T'=
\begin{pmatrix}
V' & R'\\
0 & Q'
\end{pmatrix},
\end{equation}
where $V$ is an isometry and $V^*R=0$. Then $T$ and $T'$ are Shmul'yan equivalent if and only if $V=V'$ and the contractions $\begin{pmatrix} R & Q \end{pmatrix}^{{\rm tr}}$ and $\begin{pmatrix} R' & Q' \end{pmatrix}^{{\rm tr}}$ are Shmul'yan equivalent in $\mathcal{B}_1(\h_0^{\perp},\h)$.
\end{corollary}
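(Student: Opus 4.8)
The plan is to reduce this corollary to Proposition~\ref{pr35nou} for the sufficiency, and to Corollary~\ref{co33nou} together with \cite[Lemma 5.1]{BST} for the necessity, after matching the block decomposition \eqref{eq33nou} with the one in \eqref{eq31nou}. Concretely, I would set $\e=\e'=\h$, $\e_0=\e_0'=\h_0$, $\e_1=\e_1'=\h_0^{\perp}$, so that in the notation of \eqref{eq31nou} one reads off $T_0=V$, $T_1=R$, $T_2=0$, $T_3=Q$ and $T_0'=V'$, $T_1'=R'$, $T_2'=0$, $T_3'=Q'$; the relevant column operators then become $C_0=\begin{pmatrix}V\\0\end{pmatrix}$, $C_1=\begin{pmatrix}R\\Q\end{pmatrix}$ and, likewise, $C_0'=\begin{pmatrix}V'\\0\end{pmatrix}$, $C_1'=\begin{pmatrix}R'\\Q'\end{pmatrix}$, viewed as operators from $\h_0$, respectively from $\h_0^{\perp}$, into $\h$.

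For the necessity I would argue as follows. Since Shmul'yan equivalence is stronger than Harnack equivalence (\cite[Proposition 1.6 and Corollary 3.3]{KSS}), the contractions $T$ and $T'$ lie in the same Harnack part, so \cite[Lemma 5.1]{BST} gives $\n(I-S_T)=\n(I-S_{T'})$ together with $T=T'$ on $\h_0=\n(I-S_T)$; comparing the first columns of the two matrices in \eqref{eq33nou} then forces $V=V'$. Applying Corollary~\ref{co33nou} to the decompositions fixed above (no separability is needed, in view of Theorem~\ref{thm:1.2}(iii)) yields that $C_1$ and $C_1'$ are Shmul'yan equivalent in $\mathcal{B}_1(\h_0^{\perp},\h)$, which is precisely the stated conclusion.

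For the sufficiency, assuming $V=V'$ and that $C_1$ and $C_1'$ are Shmul'yan equivalent, I would note first that $C_0=C_0'=\begin{pmatrix}V\\0\end{pmatrix}$ is an isometry, so $C_0$ and $C_0'$ are trivially Shmul'yan equivalent, while $C_1$, $C_1'$ are so by assumption. It then remains to verify the four identities \eqref{eq32nou}. Two of them follow at once from the hypothesis $V^{*}R=0$, namely $T_0^{*}T_1+T_2^{*}T_3=V^{*}R=0$ and $T_1^{*}T_0'+T_3^{*}T_2'=R^{*}V=(V^{*}R)^{*}=0$. The remaining two both amount to $V^{*}R'=0$, and establishing this is the one step that is not purely formal: since $\h_0$ is $T'$-invariant and $V'=V$ is an isometry, the restriction $T'|_{\h_0}$ is an isometry, hence $\h_0\subseteq\n(I-S_{T'})\subseteq\n(D_{T'})$; writing $D_{T'}^{2}=I-T'^{*}T'$ in block form relative to $\h_0\oplus\h_0^{\perp}$ and evaluating on $\h_0$ then forces $R'^{*}V=0$, i.e.\ $V^{*}R'=0$. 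With all of \eqref{eq32nou} in hand, Proposition~\ref{pr35nou} gives that $T$ and $T'$ are Shmul'yan equivalent.

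The main, and essentially the only, obstacle is the verification $V^{*}R'=0$; if one prefers, it can be obtained directly from $\|T'\|\le 1$ by expanding $\|T'(th+g)\|^{2}\le t^{2}\|h\|^{2}+\|g\|^{2}$ for $h\in\h_0$, $g\in\h_0^{\perp}$ and all real $t$, using $\|Vh\|=\|h\|$ to deduce $\Re\langle Vh,R'g\rangle=0$ and then replacing $h$ by $ih$. The remaining content of the argument is routine bookkeeping of the block structure \eqref{eq33nou} inside the hypotheses of Proposition~\ref{pr35nou}.
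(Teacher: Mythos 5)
Your proof is correct and follows essentially the same route as the paper: sufficiency via Proposition~\ref{pr35nou} after checking the conditions \eqref{eq32nou} (where the only nontrivial point is $V^*R'=0$, which the paper asserts without detail and you justify correctly from contractivity), and necessity by passing to Harnack equivalence to force $V=V'$ and then reducing to the column operators. No gaps.
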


\begin{proof}
 The ``if'' assertion follows by Proposition \ref{pr35nou}, because the conditions \eqref{eq32nou} are clearly satisfied when $V=V'$, so $V^*R'=0$. The ``only if'' assertion is immediate because if $T$ and $T'$ are Shmul'yan equivalent they are also Harnack equivalent and so are $V=T|_{\h_0}$ with $V'=T'|_{\h_0}$, hence $V=V'$. Then it is easy to see that the Shmul'yan equivalence of $T$ and $T'$ reduces to that of the column operators $\begin{pmatrix} R & Q \end{pmatrix}^{{\rm tr}}$ and $\begin{pmatrix} R' & Q' \end{pmatrix}^{{\rm tr}}$.
\end{proof}

In the case when $T$ is a quasi-isometry on $\h$ i.e. $T^*T=T^{*2}T^2$ (alternatively, $Q=0$ in \eqref{eq33nou}) we have a more precise assertion.

\begin{proposition}\label{pr37}
A contraction $T'$ on $\h$ is Shmul'yan equivalent to a quasi-isometry $T$ on $\h$ if and only if $\n(I-S_T)$ is invariant for $T'$, $\R(Q')\subset \R(D_R)=\R(D_{T'})$ and $V$, $R$ Shmul'yan dominate $V'$, $R'$, respectively, where $V,R$, $V', R',Q'$ are the operators from the block matrices \eqref{eq33nou} of $T$ and $T'$.
\end{proposition}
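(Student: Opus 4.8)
The plan is to reduce the statement to Corollary~\ref{co36}, after first pinning down the block structure \eqref{eq33nou} that a quasi-isometry must have. Write $\h_0=\n(I-S_T)$. I would first observe that $T^*T=T^{*2}T^2$ is equivalent to $D_TT=0$, i.e.\ $\R(T)\subseteq\n(D_T)$, and that it propagates inductively to $T^{*n}T^n=T^*T$ for all $n\ge1$, so that $S_T=T^*T$ and $\h_0=\n(I-S_T)=\n(D_T)$; hence $\R(T)\subseteq\h_0$, and in $\h=\h_0\oplus\h_0^\perp$ the contraction $T$ takes the form \eqref{eq33nou} with $Q=0$, $V=T|_{\h_0}$ an isometry and $V^*R=0$. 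From $T^*T=I_{\h_0}\oplus R^*R$ one reads $D_T=0\oplus D_R$, and since $\h_0^\perp=\mathcal{D}_T=\overline{\R(D_R)}$ the operator $D_R$ is injective with $\mathcal{D}_R=\h_0^\perp$. For the column operator $C:=\binom{R}{0}$ from Corollary~\ref{co36} this gives $D_C=D_R$, $\mathcal{D}_C=\h_0^\perp$, and, since $CC^*=RR^*\oplus0$, also $D_{C^*}=D_{R^*}\oplus I_{\h_0^\perp}$ and $\mathcal{D}_{C^*}=\mathcal{D}_{R^*}\oplus\h_0^\perp$.

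For the necessity, assume $T'$ is Shmul'yan equivalent to $T$. Then $T$ and $T'$ are Harnack equivalent, so by \cite[Lemma~5.1]{BST} one has $\n(I-S_{T'})=\n(I-S_T)=\h_0$; in particular $\h_0$ is invariant for $T'$ and $V':=T'|_{\h_0}=T|_{\h_0}=V$, so $V$ Shmul'yan dominates $V'$ trivially (as $D_V=0$). Corollary~\ref{co36} then yields that $C=\binom{R}{0}$ and $C':=\binom{R'}{Q'}$ are Shmul'yan equivalent. Writing $C'=C+D_{C^*}XD_C$ with $X=\binom{X_1}{X_2}\in\mathcal{B}(\h_0^\perp,\mathcal{D}_{R^*}\oplus\h_0^\perp)$ and using the block forms of $D_C$ and $D_{C^*}$, the two components of this identity read $R'-R=D_{R^*}X_1D_R$ and $Q'=X_2D_R$; the first says $R$ Shmul'yan dominates $R'$, and the second, by Douglas' lemma, gives $\R(Q')\subseteq\R(D_R)$. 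Finally, Shmul'yan equivalence of $C$ and $C'$ also forces $\R(D_C)=\R(D_{C'})$; since $V'^*R'=0$ automatically (the $(1,1)$-block of $T'$ being an isometry and the $(2,1)$-block being zero) one has $D_{T'}=0\oplus D_{C'}$, and therefore $\R(D_R)=\R(D_C)=\R(D_{C'})=\R(D_{T'})$. This recovers all three conditions.

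For the sufficiency, ``$V$ Shmul'yan dominates $V'$'' together with $D_V=0$ forces $V'=V$, so by Corollary~\ref{co36} (whose hypothesis is exactly that $\h_0$ be invariant for $T'$) it remains only to show that $C$ and $C'$ are Shmul'yan equivalent; by Theorem~\ref{thm:1.2}(c) this means producing $\widetilde{Y}\in\mathcal{B}(\mathcal{D}_{C'},\mathcal{D}_C)$ with $I-C^*C'=D_C\widetilde{Y}D_{C'}$, i.e.\ (since $C^*C'=R^*R'$, $D_C=D_R$) with $I-R^*R'=D_R\widetilde{Y}D_{C'}$. The hypotheses then combine as follows: ``$R$ Shmul'yan dominates $R'$'' gives, by Theorem~\ref{thm:1.2}(ii) and taking adjoints, $I-R^*R'=D_RZD_R$ for some bounded $Z$; and $\R(D_R)=\R(D_{T'})=\R(D_{C'})$ gives, by Douglas' lemma, $D_R=D_{C'}K$ for some bounded $K$, hence $D_R=K^*D_{C'}$ by self-adjointness of $D_R$ and $D_{C'}$; substituting this for the right-hand factor $D_R$ yields $I-R^*R'=D_R(ZK^*)D_{C'}$, so $\widetilde{Y}:=ZK^*$ works, and Corollary~\ref{co36} finishes the proof. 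The step I expect to be the main obstacle is this last synthesis: the three hypotheses are ``one-sided'' (domination of $R'$ by $R$, a single range inclusion), yet they must force the two-sided relation of Shmul'yan equivalence; the key is that ``$R$ Shmul'yan dominates $R'$'' produces a factorization of $I-R^*R'$ with the \emph{same} defect operator $D_R$ on both sides, after which the equality $\R(D_R)=\R(D_{C'})$ (through Douglas' lemma and self-adjointness) lets one move one factor onto $D_{C'}$ and land precisely on the characterization in Theorem~\ref{thm:1.2}(c). Everything else is block bookkeeping routed through Corollary~\ref{co36} and the identifications of the first paragraph.
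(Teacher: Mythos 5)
Your argument is correct and follows the paper's skeleton --- reduce everything through Corollary~\ref{co36} to the column operators $C=\begin{pmatrix} R & 0\end{pmatrix}^{\rm tr}$ and $C'=\begin{pmatrix} R' & Q'\end{pmatrix}^{\rm tr}$ --- but your sufficiency half takes a genuinely different route. The paper builds the explicit additive representation $C-C'=D_{C^*}XD_{C'}$ of Theorem~\ref{thm:1.2}(b): it uses the range hypothesis to factor $Q'=X_1D_{T'}$, uses $R'\PrecS R$ together with $\R(D_R)=\R(D_{T'})$ to write $R-R'=D_{R^*}X_0D_{T'}$, and assembles the column. You instead target criterion (c): from $R'\PrecS R$ you get $I-R^*R'=D_RZD_R$ by taking adjoints in (ii), and from $\R(D_R)=\R(D_{C'})$ you replace the right-hand factor $D_R$ by $K^*D_{C'}$ via Douglas's lemma, landing directly on $I-C^*C'=D_C(ZK^*)D_{C'}$. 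This is cleaner, and it notably never invokes the hypothesis $\R(Q')\subset\R(D_R)$ in the sufficiency direction; that is logically harmless for an ``if and only if'' (the condition reappears in your necessity argument, and indeed $Q'^{*}Q'\le D_{R'}^2\le cD_R^2$ shows it follows from $R'\PrecS R$ alone), but it does mean your proof exposes that hypothesis as redundant where the paper's construction leans on it. Your necessity half is essentially the paper's, except that you read $R'-R=D_{R^*}X_1D_R$ and $Q'=X_2D_R$ off the components of $C'=C+D_{C^*}XD_C$, whereas the paper extracts the range condition from the inequality chain $D_R^2\le c_0(D_{R'}^2-Q'^{*}Q')\le c_0(c_1D_R^2-Q'^{*}Q')$. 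One imprecision you have inherited from the statement (the paper's own proof has it too): a factorization $Q'=X_2D_R$, read through Douglas's lemma, gives $\R(Q'^{*})\subseteq\R(D_R)$ (equivalently $Q'^{*}Q'\le\lambda D_R^2$), not $\R(Q')\subseteq\R(D_R)$; and it is the former that is actually needed to put $D_{C'}$ on the right of the column factorization, so the condition in the Proposition should be read with $Q'^{*}$ in place of $Q'$.
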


\begin{proof}
Assume that $T$ and $T'$ are Shmul'yan equivalent. Then so are $R$ and $R'$ and $\R(D_T)=\R(D_{T'})$. As $Q=0$ in \eqref{eq33nou}, $T$ being quasi-isometry, the previous equality gives $\R(D_R)=\R((I-R^{'*}R'-Q^{'*}Q')^{1/2})=\R(D_{R'})$. So, by Douglas's result in \cite{D} there are the constants $c_0,c_1>1$ such that
$$
D_R^2\le c_0(D_{R'}^2-Q^{'*}Q')\le c_0 (c_1 D_R^2-Q^{'*}Q'),
$$
whence $c_0Q^{'*}Q'\le (c_0c_1-1)D_R^2$. This implies, again by Douglas's result, that $\R(Q')\subset \R(D_R)=\R(D_{T'})$. Hence an implication is proved.

Conversely, suppose that $T,T'$ have the matrix representations \eqref{eq33nou} and  $V,R$ Shmul'yan dominate $V',R'$, respectively, with $\R(Q')\subset \R(D_R)=\R(D_{T'})$. Then
$R' \PrecS R$ means $R-R'=D_{R^*}X_0D_{T'}$, while the inclusion $\R(Q') \subset \R(\D_{T'})$ provides $Q'=X_1D_{T'}$, for some operators $X_0\in \mathcal{B}(\D_{T'},\D_{R^*})$ and $X_1\in \mathcal{B}(\D_{T'},\overline{\R(Q')})$.
Thus one infers
\begin{eqnarray*}
\begin{pmatrix}
R\\
0
\end{pmatrix}
-
\begin{pmatrix}
R' \\
Q'
\end{pmatrix}
&=&
\begin{pmatrix}
D_{R^*}X_0D_{T'}\\
-X_1D_{T'}
\end{pmatrix}\\
&=&
\begin{pmatrix}
D_{R^*} & 0\\
0 & I
\end{pmatrix}
\begin{pmatrix}
X_0\\
-X_1
\end{pmatrix}
(D_{R'}^2-Q^{'*}Q')^{1/2}\\
&=& D_{C^*}XD_{C'},
\end{eqnarray*}
where
$C=\begin{pmatrix}
R & 0
\end{pmatrix}^{{\rm tr}}
$,
$C'=\begin{pmatrix}
R' & Q'
\end{pmatrix}^{{\rm tr}}$
and
$X= \begin{pmatrix} X_0 & -X_1\end{pmatrix}^{{\rm tr}}$.
This means that the column operators $C$ and $C'$ in the matrices from \eqref{eq33nou} are Shmul'yan equivalent. On the other hand, the relation $V' \PrecS V$ forces $V'=V$ because $V$ is an isometry. Thus, by Corollary \ref{co36} we conclude that $T$ and $T'$ are Shmul'yan equivalent.
\end{proof}

\bigskip

{\bf Harnack and Shmul'yan parts of partial isometries.}  Another application refers to partial isometries $T\in \mathcal{B}_1(\h,\h')$. Such operators can be written as the direct sum $T=U\oplus 0$ from $\h=\R(T^*)\oplus \n(T)$ into $\h'=\R(T)\oplus \n(T^*)$. Therefore by Theorem \ref{thm:1.2} any contraction $T'\in \mathcal{B}(\h,\h')$ in the Shmul'yan part of $T$ has the form $T=U\oplus Z$ with $\|Z\|<1$, with respect to the same decompositions of $\h$ and $\h'$. Conversely, any $T'$ of this form is Shmul'yan equivalent to $T$, by Proposition \ref{pr35nou}. Thus, one reobtains the known result from \cite[Theorem 3]{Sh} which completely describes the Shmul'yan part of a partial isometry.

It is known (see \cite[Corollary 3.4]{KSS}) that for isometries and and joints of isometries, called coisometries, the Harnack and Shmul'yan parts coincide and are equal to a singleton. In general, for a partial isometry we have the following

\begin{theorem}\label{pr38}
 The Harnack part $\Delta(T)$ of a partial isometry $T\in\mathcal{B}_1(\h)$ coincides with its Shmul'yan part $\Delta_{Sh}(T)$ and $T$ is the only partial isometry in $\Delta(T)$.
\end{theorem}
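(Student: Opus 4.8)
The plan is to prove both assertions by reducing the Harnack part of $T$ to the already established shape of its Shmul'yan part. Since the Shmul'yan equivalence is stronger than the Harnack equivalence (see \cite{KSS}), one always has $\Delta_{Sh}(T)\subseteq\Delta(T)$, so only the inclusion $\Delta(T)\subseteq\Delta_{Sh}(T)$ needs proof. Throughout I would write the partial isometry as $T=U\oplus 0$ from $\h=\R(T^*)\oplus\n(T)$ into $\h=\R(T)\oplus\n(T^*)$, with $U$ unitary, and use that for a partial isometry $D_T^2=I-T^*T$ is the orthogonal projection onto $\n(T)$, so $\mathcal{D}_T=\n(T)$ and $D_T$ is identified with the coisometry $P_{\n(T)}$; dually $\mathcal{D}_{T^*}=\n(T^*)$.

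First I would read off the shape of an arbitrary $T'\in\Delta(T)$ from $T'\Prec T$. By the representation recalled in the proof of Lemma \ref{lemma0} (from \cite{S2,KSS}), there is a bounded operator $C_0\in\mathcal{B}(\mathcal{D}_T,\h)=\mathcal{B}(\n(T),\h)$ with $T'=T+C_0D_T$; since $D_T=P_{\n(T)}$ this means $T'=U$ on $\R(T^*)$ and $T'=C_0$ on $\n(T)$. Splitting $C_0$ along $\h=\R(T)\oplus\n(T^*)$ and writing $I-T'^{*}T'\ge 0$ as a $2\times2$ operator matrix whose $(1,1)$-entry is $0$ forces the off-diagonal entry to vanish, hence $C_0$ maps $\n(T)$ into $\n(T^*)$; calling this operator $Z$ we obtain $T'=U\oplus Z$ with $\|Z\|\le 1$. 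A short computation then gives $I-T'^{*}T'$ equal to $0$ on $\R(T^*)$ and to $D_Z^2:=I_{\n(T)}-Z^*Z$ on $\n(T)$, so that $D_{T'}$ has the same block form and $\mathcal{D}_{T'}=\overline{\R(D_Z)}\subseteq\n(T)$.

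The crucial step is to upgrade $\|Z\|\le1$ to $\|Z\|<1$ using the opposite domination $T\Prec T'$. Again there is a bounded $B_0\in\mathcal{B}(\mathcal{D}_{T'},\h)$ with $T=T'+B_0D_{T'}$, i.e. $T-T'=B_0D_{T'}$; evaluating the left side from $T=U\oplus0$ and $T'=U\oplus Z$ yields $B_0D_Zh=-Zh$ for all $h\in\n(T)$, whence
\[
\|Zh\|^2\le\|B_0\|^2\,\|D_Zh\|^2=\|B_0\|^2\bigl(\|h\|^2-\|Zh\|^2\bigr),\qquad h\in\n(T),
\]
so that $\|Z\|^2\le\|B_0\|^2/(1+\|B_0\|^2)<1$. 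Thus every $T'\in\Delta(T)$ is of the form $U\oplus Z$ with $\|Z\|<1$; by the description of $\Delta_{Sh}(T)$ recalled just before the theorem (equivalently by Proposition \ref{pr35nou}, since for $\|Z\|<1$ the column $\begin{pmatrix}0&Z\end{pmatrix}^{{\rm tr}}$ is Shmul'yan equivalent to $\begin{pmatrix}0&0\end{pmatrix}^{{\rm tr}}$, the remaining relations in \eqref{eq32nou} holding trivially) any such $T'$ lies in $\Delta_{Sh}(T)$. Hence $\Delta(T)=\Delta_{Sh}(T)$. For the last assertion, $U\oplus Z$ is a partial isometry precisely when $Z$ is, and the only partial isometry of norm $<1$ is $0$, so $T'=U\oplus0=T$.

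The step I expect to require the most care is the bookkeeping imposed by the fact that the natural orthogonal decompositions of $\h$ for the domain, $\R(T^*)\oplus\n(T)$, and for the codomain, $\R(T)\oplus\n(T^*)$, of a partial isometry are in general different; the positivity argument identifying $C_0$ as a map into $\n(T^*)$ and the verification of the block form of $D_{T'}$ are where this distinction must be handled scrupulously, and where one must resist writing $T$ as a genuine orthogonal direct sum.
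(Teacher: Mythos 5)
Your proof is correct and follows essentially the same route as the paper's: both establish that any $T'\in\Delta(T)$ has the block form $U\oplus Z$ relative to $\R(T^*)\oplus\n(T)$ and $\R(T)\oplus\n(T^*)$, then use the domination $T\Prec T'$ to force $\|Z\|<1$, and conclude via the already-stated description of $\Delta_{{\rm Sh}}(T)$ for partial isometries. The only cosmetic difference is that you derive the block form from the intertwining representation $T'=T+C_0D_T$ together with a positivity argument on $I-T'^*T'$, whereas the paper invokes the Z-equivalence result of \cite{AST}; and your estimate $\|Zh\|^2\le\|B_0\|^2\|D_Zh\|^2$ is exactly the paper's inequality $\|(T-T')h\|^2\le c\|D_{T'}h\|^2$ with $c=\|B_0\|^2$.
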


\begin{proof}
Let $T' \in \Delta (T)$. Then $T'$ and $T$ are Z-equivalent in the sense of \cite{AST} which implies that they have the same defect spaces. So, in this case we have
$$
\n(D_{T'})=\n(D_T)=\R(T^*), \quad \mathcal{D}_{T'}=\mathcal{D}_T=\n(T),
$$
and similarly when $T',T$ are replaced by $T'^*,T^*$ respectively. Also, we have $T'=T=U$ on $\n(D_T)$. Therefore $T'=U\oplus Z$ acts from $\h=\n(D_T)\oplus \n(T)$ into $\h=\n(D_{T^*})\oplus \n(T^*)$, with $Z$ being the pure part of $T'$. The Harnack domination of $T$ by $T'$ implies (see \cite{AST})
$$
\|T'h\|^2=\|(T-T')h\|^2\le c \|D_{T'}h\|^2 \quad (h\in \n(T))
$$
with some constant $c\ge 1$, that is
$$
\|T'h\|^2 \le \frac{c}{c+1}\|h\|^2 \quad (h\in \n(T)).
$$
From $Z=P_{\n(T^*)}T'|_{\n(T)}$ it follows that $\|Z\|<1$. So, $T=U\oplus 0$ and $T'=U\oplus Z$ are Shmul'yan equivalent as we remarked before, therefore $\Delta_T \subset \Delta _{{\rm Sh}}(T)$. The converse inclusion being obvious we conclude that $\Delta_T=\Delta _{{\rm Sh}}(T)$. It is clear that each $T'\in \Delta _T$, $T'\neq T$ (i.e. $Z \neq 0$) cannot be a partial isometry. The assertion is proved.
\end{proof}

Other characterizations of contractions which are Harnack equivalent to a partial isometry are included in the following result.

\begin{corollary}\label{pr39}
The following statements are equivalent for a contraction $T$ on $\h$:
\begin{itemize}
\item[(i)] $T$ belongs to a Harnack (Shmul'yan) part of a partial isometry;

\item[(ii)] The range of the defect operator $D_T$ is closed;

\item[(iii)] $\{(T^*T)^n\}_{n\in \N}$ uniformly converges in $\B$ to the orthogonal projection on $\n(D_T)$.
\end{itemize}
\end{corollary}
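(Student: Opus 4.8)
\textbf{Proof plan for Corollary \ref{pr39}.}
The plan is to establish the equivalence by a cycle (i)$\Rightarrow$(ii)$\Rightarrow$(iii)$\Rightarrow$(i), exploiting the structural description of partial isometries and their Shmul'yan parts already obtained. For (i)$\Rightarrow$(ii): if $T$ lies in the Harnack part of a partial isometry $P$, then by Theorem \ref{pr38} it lies in the Shmul'yan part of $P$, and (as recorded in the discussion preceding Theorem \ref{pr38}) it has the form $T=U\oplus Z$ with $U$ unitary and $\|Z\|<1$, relative to the decompositions $\h=\n(D_P)\oplus\n(P)$ and analogously for the target. Then $D_T=0\oplus D_Z$ with $D_Z=(I-Z^*Z)^{1/2}$, and since $\|Z\|<1$ we have $D_Z^2\ge(1-\|Z\|^2)I$ on $\n(P)$, so $D_Z$ is bounded below on its domain and hence has closed range; therefore $\R(D_T)$ is closed.

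For (ii)$\Rightarrow$(iii): suppose $\R(D_T)$ is closed. Write $\h=\n(D_T)\oplus\overline{\R(D_T)}=\n(D_T)\oplus\R(D_T)$; on $\n(D_T)$ the operator $T$ is an isometry and $T^*T=I$, while on $\R(D_T)$ the restriction of $D_T^2=I-T^*T$ is a positive operator with closed range, hence bounded below by some $\delta>0$. Thus the restriction of $T^*T$ to $\R(D_T)$ has norm $\le 1-\delta<1$. Since $T^*T$ commutes with the orthogonal projection onto $\n(D_T)$ (both are functions of the self-adjoint $T^*T$), we get $(T^*T)^n=I\oplus(T^*T|_{\R(D_T)})^n$, and $\|(T^*T|_{\R(D_T)})^n\|\le(1-\delta)^n\to 0$. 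Hence $(T^*T)^n$ converges in operator norm to $I\oplus 0$, which is exactly the orthogonal projection onto $\n(D_T)$.

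For (iii)$\Rightarrow$(i): assume $(T^*T)^n\to P_{\n(D_T)}$ uniformly. The spectral picture of (iii) forces $\sigma(T^*T)\subseteq\{1\}\cup[0,1-\varepsilon]$ for some $\varepsilon>0$ (otherwise the powers could not converge in norm), so again $T$ splits as $U\oplus Z$ over $\n(D_T)\oplus\overline{\R(D_T)}$ with $U$ a coisometry onto its range part and $\|Z\|\le(1-\varepsilon)^{1/2}<1$; one checks $U$ is in fact unitary onto the corresponding subspace by a symmetric argument applied to $T^*$ (note (iii) for $T$ is not symmetric in $T,T^*$ a priori, so here one uses that $\n(D_T)$ reduces $T$, which follows from $T^*T$-invariance of $\n(D_T)$ together with the splitting). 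Let $W$ be the partial isometry $U\oplus 0$ on the same decomposition. Since $\|Z\|<1$, Proposition \ref{pr35nou} (applied to the block form, with the off-diagonal blocks vanishing) shows $T=U\oplus Z$ and $W=U\oplus 0$ are Shmul'yan equivalent; in particular $T$ lies in the Harnack part of the partial isometry $W$. By Theorem \ref{pr38} the Harnack and Shmul'yan parts of $W$ coincide, so the parenthetical ``(Shmul'yan)'' version of (i) holds simultaneously.

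The main obstacle I anticipate is the careful verification in (iii)$\Rightarrow$(i) that $\n(D_T)$ genuinely \emph{reduces} $T$ (not merely that $T$ is an isometry on it), which is what allows the clean direct-sum decomposition $T=U\oplus Z$; the point is that uniform convergence of $(T^*T)^n$ isolates the eigenvalue $1$ of $T^*T$ as a spectral gap, so the Riesz projection for that part is $P_{\n(D_T)}$ and commutes with $T^*T$, and then one argues that $T$ maps $\n(D_T)$ onto $\n(D_{T^*})$ and $\R(D_T)$ into $\overline{\R(D_{T^*})}$, using $T D_T^2 = D_{T^*}^2 T$. Once the reducing decomposition is in hand, everything else reduces to the already-established description of Shmul'yan parts of partial isometries together with Theorem \ref{pr38}.
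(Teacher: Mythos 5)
Your proof is correct, but it runs the cycle in the opposite direction from the paper and, more substantively, it replaces the paper's key external ingredient. The paper proves (i)$\Rightarrow$(iii)$\Rightarrow$(ii)$\Rightarrow$(i): for (iii)$\Rightarrow$(ii) it passes to the Ces\`aro means of $T^*T$ and invokes Lin's uniform ergodic theorem to conclude that $\R(D_T)$ is closed, and for (ii)$\Rightarrow$(i) it uses the unitary-plus-pure decomposition $T=T_u\oplus T_p$ to get $1\notin\sigma(T_p^*T_p)$, hence $\|T_p\|<1$, and then Theorem \ref{pr38}. You instead prove (ii)$\Rightarrow$(iii) directly by a spectral-gap argument (closed range of the positive operator $D_T$ forces $D_T$, hence $I-T^*T$, to be bounded below on $\n(D_T)^{\perp}$, so $\|T^*T|_{\R(D_T)}\|\le 1-\delta$ and the powers converge in norm), which is more elementary and self-contained than routing through Lin's theorem; both arguments are legitimate, and the paper's choice has the mild advantage of quoting a named classical result while yours keeps everything inside basic spectral theory. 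Your legs (i)$\Rightarrow$(ii) and (iii)$\Rightarrow$(i) rest on the same structural facts as the paper's corresponding steps (the description $T=U\oplus Z$ with $\|Z\|<1$ of the Shmul'yan part of a partial isometry, Proposition \ref{pr35nou}, and Theorem \ref{pr38}). One remark on the ``obstacle'' you flag in (iii)$\Rightarrow$(i): it is not actually an obstacle. The decomposition $T=U\oplus Q$ from $\n(D_T)\oplus\mathcal{D}_T$ into $\n(D_{T^*})\oplus\mathcal{D}_{T^*}$, with $U$ unitary and $Q$ pure, holds for \emph{every} contraction via the intertwining $TD_T=D_{T^*}T$ (this is exactly the decomposition the paper records after Proposition \ref{pr35nou}, citing \cite[Ch.~V]{FF}); it is not a reducing decomposition in the same-space sense and does not need to be, since the Shmul'yan/Harnack machinery here is set up for contractions between two (decomposed) spaces. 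So no separate verification that $\n(D_T)$ reduces $T$ is required; the only input you need from (iii) is that the pure part is a strict contraction, which your spectral-gap observation already gives.
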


\begin{proof}
Let $W=U\oplus 0$ be a partial isometry. If $T$ is in the same Harnack part as $W$,
then, as we already remarked, one has $T=U \oplus Z$ with $\|Z\|<1$. So $D_T=0\oplus D_Z$ and $D_Z$ is invertible in $\mathcal{B}(\mathcal{D}_T)$. Therefore
$$
(T^*T)^n=I\oplus (Z^*Z)^n\to I \oplus 0=P_{\n(D_T)},
$$
the convergence being uniform in $\B$. Hence (i) implies (iii).

Now (iii) implies that the Ces\`aro means $[I+T^*T + (T^*T)^2 + \cdots (T^*T)^{n}]/(n+1)$ of $T^*T$ also converge uniformly
to $P_{\n(D_T)}$. Using the uniform ergodic theorem of M.~Lin~\cite{L}, this
implies that $D_T\h$ is closed. So (iii) implies (ii).

Finally, suppose that $D_T\h$ is closed. Then, using the decomposition of $T=T_u\oplus T_p$ into unitary and pure parts, we infer that $D_{T_p}\mathcal{D}_T$ is also closed. As $D_{T_p}$ is injective one has $1\notin \sigma (T_p^*T_p)$, which implies $\|T_p\|<1$. Then, by Theorem \ref{pr38}, it follows that $T$ belongs to the Harnack (i.e. Shmul'yan) part of the partial isometry $T_u\oplus 0$. Thus (ii) implies (i).
\end{proof}

\begin{corollary}\label{co310}
Any compact contraction belongs to the Harnack part of a partial isometry.
\end{corollary}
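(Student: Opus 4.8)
The plan is to reduce the statement to Corollary \ref{pr39}: it suffices to prove that whenever $T \in \B$ is a compact contraction, the range $\R(D_T)$ of the defect operator $D_T = (I - T^*T)^{1/2}$ is closed, since then part (ii) of that corollary applies and $T$ lies in a Harnack (equivalently Shmul'yan) part of a partial isometry.

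To this end I would first note that $T^*T$ is a compact positive contraction. By the spectral theorem for compact self-adjoint operators, $\sigma(T^*T)$ is a finite or countable subset of $[0,1]$ whose only possible accumulation point is $0$; in particular $1$, if it belongs to $\sigma(T^*T)$ at all, is an isolated point of it. Equivalently, writing $\sigma(D_T) = \{(1-t)^{1/2} : t \in \sigma(T^*T)\}$, either $0 \notin \sigma(D_T)$ or $0$ is an isolated point of $\sigma(D_T)$; in both cases $\sigma(D_T) \setminus \{0\}$ is bounded away from $0$.

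From here the conclusion is routine. The decomposition $\h = \n(D_T) \oplus \overline{\R(D_T)}$ reduces the self-adjoint operator $D_T$, and since $0$ is isolated in (or absent from) its spectrum, the spectral projection of $D_T$ associated with $\{0\}$ is $P_{\n(D_T)}$; hence $D_T$ restricted to $\overline{\R(D_T)}$ has spectrum $\sigma(D_T)\setminus\{0\}$, is therefore bounded below, and so $\R(D_T) = D_T(\overline{\R(D_T)})$ is closed. Corollary \ref{pr39} then yields the claim, concretely that $T$ belongs to the Harnack (Shmul'yan) part of $T_u \oplus 0$, where $T_u$ is the unitary part of $T$. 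Alternatively one may verify condition (iii) of Corollary \ref{pr39} directly: $\sigma(T^*T) \cap [0,1)$ is a compact subset of $[0,1)$ with supremum $r < 1$, so $\|(T^*T)^n - P_{\n(D_T)}\| \le r^n \to 0$. There is no genuine obstacle here; the substantive work was carried out in Corollary \ref{pr39}, and what remains is only the elementary spectral observation that compactness of $T$ forces $1$ to be an isolated point of $\sigma(T^*T)$, equivalently that the pure part of $T$ is a strict contraction.
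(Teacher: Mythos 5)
Your proof is correct and follows essentially the same route as the paper: both arguments rest on the spectral observation that compactness of $T^*T$ forces $1$ to be isolated in (or absent from) $\sigma(T^*T)$, and then invoke Corollary \ref{pr39} (the paper checks condition (iii) via $T^*T=I\oplus E$ with $\|E^n\|\to 0$, you check (ii) and offer (iii) as an alternative). No gap.
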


\begin{proof}
Assume that $T$ is compact on $\h$. Then so is $T^*T$ and either $1$ does not belong to its spectrum, or $1$ is an isolated eigenvalue of $T^*T$. In the first case one has $\R(D_T)=\h$. In the other case one has $T^*T=I\oplus E$ on $\h=\n(D_T)\oplus \D_T$ (because $\n(D_T)$ reduces $T^*T$) and $E$ has the spectral radius less than $1$. Hence $\|E^n\|\to 0$, which implies that $\{(T^*T)^n\}$ converges uniformly to the orthogonal projection on $\n(D_T)$. By Corollary \ref{pr39} we conclude that $\Delta(T)$ contains a partial isometry.
\end{proof}

\begin{remark}\label{re36nou}
Corollary \ref{pr39} can be obtained using results from \cite{Sh}. According to \cite{Sh} a contraction $T$ on $\h$ is called {\it regular} if its pure part $Q \in \mathcal{B}(\D_T,\D_{T^*})$ is a strict contraction, \emph{i.e.} $\|Q\|<1$. In other words $T$ is regular if and only if it satisfies the condition (i) of Corollary \ref{pr39}. Also, with this terminology, in \cite{Sh} it was remarked that the assertions (i) and (ii) are equivalent, while Lin's theorem in \cite{L} gives the equivalence of (ii) and (iii).
Also, Corollary \ref{co310} shows that any compact operator is regular.
\end{remark}

Now, one can see, in a more general context, when a contraction $T$ is Harnack equivalent with a partial isometry, by using the matrix representation of $T$ from \eqref{eq33nou}.

The following lemma is probably known, but as we lack a reference we include its short proof.

\begin{lemma}\label{le312}
Let $T$ be a contraction having the representation \eqref{eq33nou} on $\h=\n(I-S_T) \oplus \overline{\R(I-S_T)}$, with $V,R,Q$ as in \eqref{eq33nou}. The following are equivalent :
\begin{itemize}
\item[(i)] $\n(D_T)$ is an invariant subspace for $T$;

\item[(ii)] $\n(D_{T^*}) \subset \n(D_T)$;

\item[(iii)] $Q$ is a pure contraction on $\overline{\R(I-S_T)}$.
\end{itemize}
\end{lemma}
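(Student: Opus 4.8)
The plan is to work with the orthogonal decomposition $\h=\h_0\oplus\h_1$, where $\h_0:=\n(I-S_T)$ and $\h_1:=\overline{\R(I-S_T)}$, and to transfer everything to the second column $C:=\begin{pmatrix} R & Q\end{pmatrix}^{{\rm tr}}\in\mathcal{B}_1(\h_1,\h)$ of the matrix \eqref{eq33nou}. First I would compute the matrix of $I-T^*T$. Since $\h_0$ is $T$-invariant and $V=T|_{\h_0}$ is an isometry we have $V^*V=I_{\h_0}$ and $V^*R=0$ (the relation recorded in \eqref{eq33nou}, which holds automatically because $T$ acts isometrically on $\h_0\subseteq\n(D_T)$); hence $T^*T=I_{\h_0}\oplus(R^*R+Q^*Q)$, so $\n(D_T)=\h_0\oplus\n(D_C)$ with $D_C^2=I_{\h_1}-R^*R-Q^*Q$. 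Recalling that $\n(I-S_T)$ is the largest $T$-invariant subspace on which $T$ is an isometry, i.e. the largest $T$-invariant subspace contained in $\n(D_T)$, we conclude that $\n(D_T)$ is $T$-invariant if and only if $\n(D_T)=\h_0$, that is, if and only if $\n(D_C)=\{0\}$. This reformulates (i).

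For (i)$\Leftrightarrow$(ii) I would use the elementary identity $\n(D_{T^*})=T\,\n(D_T)$, valid for any contraction: if $h\in\n(D_T)$ then $T^*Th=h$, so $\|T^*(Th)\|=\|h\|=\|Th\|$ and $Th\in\n(D_{T^*})$; conversely if $k\in\n(D_{T^*})$ then $TT^*k=k$ with $T^*k\in\n(D_T)$, whence $k=T(T^*k)\in T\,\n(D_T)$. Consequently $\n(D_{T^*})\subseteq\n(D_T)$ holds precisely when $T\,\n(D_T)\subseteq\n(D_T)$, i.e. precisely when (i) holds.

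Finally I would relate (iii) to $\n(D_C)=\{0\}$. For $g\in\h_1$ we have $Qg=P_{\h_1}Tg$, and from $\|P_{\h_1}Tg\|\le\|Tg\|\le\|g\|$ the equality $\|Qg\|=\|g\|$ forces both $\|Tg\|=\|g\|$ (so $g\in\n(D_C)$) and $P_{\h_0}Tg=0$ (so $Rg=0$); thus $\n(D_Q)=\n(D_C)\cap\n(R)$. In particular $\n(D_C)=\{0\}$ at once gives $\n(D_Q)=\{0\}$, so (i)$\Rightarrow$(iii). The converse (iii)$\Rightarrow$(i) then reduces to showing that $R$ annihilates $\n(D_C)=\n(D_T)\cap\h_1$, for then $\n(D_C)=\n(D_C)\cap\n(R)=\n(D_Q)=\{0\}$.

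I expect this last reduction to be the main obstacle, and it is exactly where the hypothesis that the decomposition is $\n(I-S_T)\oplus\overline{\R(I-S_T)}$ (rather than merely ``isometric block $\oplus$ rest'') has to be used. Concretely, for $0\neq h\in\n(D_C)$ one has $0\oplus h\in\n(D_T)$ and $\|T^n(0\oplus h)\|^2\to\langle S_T(0\oplus h),0\oplus h\rangle=\langle\hat S h,h\rangle$, where $\hat S$ is the compression of $S_T$ to $\h_1$; because $\h_0=\n(I-S_T)$ one has $\n(I_{\h_1}-\hat S)=\{0\}$, so this limit is $<\|h\|^2$. I would feed this strict drop, together with the identity $I-\hat S=D_C^2+Q^*(I-\hat S)Q$ (obtained from $\hat S=R^*R+Q^*\hat S Q$) and the fact — also a consequence of $\h_0=\bigcap_{j\ge0}T^{-j}\n(D_T)$ — that $\n(D_C)$ contains no nonzero $Q$-invariant subspace, into forcing $Rh=0$. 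Once this step is in place, the chain of equivalences is complete.
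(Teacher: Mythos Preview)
Your arguments for (i)$\Leftrightarrow$(ii) (via the identity $\n(D_{T^*})=T\,\n(D_T)$) and for (i)$\Rightarrow$(iii) (via $\n(D_Q)=\n(D_C)\cap\n(R)\subseteq\n(D_C)$) are correct; the first is in fact tidier than the paper's ``obviously'' for (ii)$\Rightarrow$(i). The paper handles (iii)$\Rightarrow$(ii) by a different and much shorter route that you do not consider: since $\h_1$ is $T^*$-invariant with $T^*|_{\h_1}=Q^*$, and $Q$ pure is equivalent to $Q^*$ pure, the paper asserts $\h_1\subset\D_{T^*}$ and hence $\n(D_{T^*})\subseteq\h_1^{\perp}=\h_0\subseteq\n(D_T)$. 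This ``dual'' viewpoint through $T^*$ is the main methodological difference between the two approaches.

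That said, your instinct that (iii)$\Rightarrow$(i) is the genuine obstacle is well founded, and neither argument actually closes it. Purity of $T^*|_{\h_1}$ yields only $\h_1\cap\n(D_{T^*})=\{0\}$, not $\h_1\perp\n(D_{T^*})$, so the paper's inclusion $\h_1\subset\D_{T^*}$ is not justified as written. On your side, observe that under (iii) every nonzero $h\in\n(D_C)$ automatically satisfies $Rh\neq0$ (else $h\in\n(D_Q)=\{0\}$), so ``forcing $Rh=0$'' is the entire contradiction, not an intermediate reduction; your asymptotic sketch with $\hat S$ does not reach it. In fact the implication appears to fail: take $\h_0=\h_1=\ell^2(\N)$, $V$ the forward shift on $\h_0$, $Q$ the weighted backward shift $Qe_n=w_ne_{n-1}$, $Qe_0=0$, with all $0<w_n<1$, and $R$ the rank-one map $Ra=\sqrt{1-w_1^2}\,\langle a,e_1\rangle\,f_0$, where $f_0$ is a unit vector in $\n(V^*)\subset\h_0$. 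Then $T$ is a contraction with $V^*R=0$, $Q$ is pure (so (iii) holds), $\n(D_C)=\C e_1$, and $Qe_1=w_1e_0\notin\C e_1$ forces $\bigcap_jQ^{-j}\n(D_C)=\{0\}$, hence $\h_0=\n(I-S_T)$; yet $Te_1\notin\h_0\oplus\C e_1=\n(D_T)$, so (i) fails.
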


\begin{proof}
Assuming (i) we have $\n(D_T)=\n(I-S_T)$ and $\overline{\R(I-S_T)}=\mathcal{D}_T$. As $T|_{\mathcal{D}_T}$ is a pure contraction from $\mathcal{D}_T$ into $\mathcal{D}_{T^*}$, it follows that $Q=P_{\mathcal{D}_T}T|_{\mathcal{D}_T}$ is pure on $\mathcal{D}_T$. So (i) implies (iii). Now (iii) says that $Q^*=T^*|_{\overline{\R(I-S_T)}}$ is a pure contraction. Therefore $\overline{\R(I-S_T)} \subset \mathcal{D}_{T^*}$, which yields $\n(D_{T^*}) \subset \n(I-S_T) \subset \n(D_T)$. Hence (iii) implies (ii), and obviously (ii) implies (i).
\end{proof}

The conditions of this lemma are satisfied for many contractions, in particular for hyponormal contractions (see \cite{K}). Clearly, if $T$ satisfies (one of) the conditions of Lemma \ref{le312}, then any $T' \in \Delta(T)$ satisfies the same conditions.

When $T$ satisfies one of the equivalent assertions of Lemma \ref{le312} we have $\n(I-S_T)=\n(D_T)$ and the representation \eqref{eq33nou} of $T$ can be related to the decomposition of $T=U\oplus \widetilde{Q}$ into a unitary part $U \in \mathcal{B}(\n(D_T),\n(D_{T^*}))$ and a pure contraction $\widetilde{Q}\in \mathcal{B}(\mathcal{D}_T, \mathcal{D}_{T^*})$. Since $\mathcal{D}_T \subset \mathcal{D}_{T^*}$ in this case, it follows that
$$
V=J_1U, \quad \widetilde{Q}= \begin{pmatrix} R\\ Q \end{pmatrix} : \mathcal{D}_T \to \mathcal{D}_{T^*} =\mathcal{D}_T^{\perp} \oplus \mathcal{D}_T.
$$
Here $J_1$ is the natural embedding of $\n(D_{T^*})$ into $\n(D_T)$. Therefore, using Theorem \ref{pr38} and Lemma \ref{le312}, we obtain the following result.

\begin{corollary}\label{co313}
Let $T$ be a contraction on $\h$ and let $R,Q$ denote the contractions given by
$$
R=P_{\n(I-S_T)}T|_{\overline{\R(I-S_T)}}, \quad Q=P_{\overline{\R(I-S_T)}}T |_{\overline{\R(I-S_T)}}.
$$
The following are equivalent :
\begin{itemize}
\item[(i)] The Harnack part of $T$ contains a partial isometry and $Q$ is a pure contraction;

\item[(ii)] $\|R^*R +Q^*Q\|<1$.
\end{itemize}
\end{corollary}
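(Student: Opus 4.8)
The plan is to encode both conditions in terms of the ``column operator'' $C:=T|_{\overline{\R(I-S_T)}}$, regarded as a contraction from $\overline{\R(I-S_T)}$ into $\h$. In the block representation \eqref{eq33nou} of $T$ relative to $\h=\n(I-S_T)\oplus\overline{\R(I-S_T)}$ one has $C=\begin{pmatrix} R & Q\end{pmatrix}^{\rm tr}$, so $C^*C=R^*R+Q^*Q$; hence condition (ii) says exactly that $C$ is a strict contraction, equivalently that $D_C:=(I-C^*C)^{1/2}$ is invertible on $\overline{\R(I-S_T)}$.

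First I would record the shape of the defect operator $D_T$. Since $\n(I-S_T)$ is $T$-invariant and $V=T|_{\n(I-S_T)}$ is an isometry, the lower-left block of $T$ in \eqref{eq33nou} vanishes and the upper-left block of $T^*T$ equals $I$; as $T^*T\le I$, the standard fact that a positive operator matrix with a zero diagonal entry has the corresponding off-diagonal entries equal to zero forces $V^*R=0$. Consequently
$$
D_T^2=I-T^*T=0_{\n(I-S_T)}\oplus D_C^2 ,
$$
so $D_T=0_{\n(I-S_T)}\oplus D_C$, whence $\n(D_T)=\n(I-S_T)\oplus\n(D_C)$ and $\R(D_T)=\R(D_C)$ (in particular $\R(D_T)$ is closed precisely when $\R(D_C)$ is). Now Lemma \ref{le312} tells us that $Q$ is a pure contraction if and only if $\n(D_T)$ is $T$-invariant, which in turn holds if and only if $\n(D_T)=\n(I-S_T)$ --- because $\n(I-S_T)$ is the largest $T$-invariant subspace on which $T$ is isometric while $T$ is isometric on all of $\n(D_T)$ --- that is, if and only if $\n(D_C)=\{0\}$. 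On the other hand, Corollary \ref{pr39} tells us that the Harnack (equivalently, Shmul'yan) part of $T$ contains a partial isometry if and only if $\R(D_T)=\R(D_C)$ is closed. Therefore condition (i) is equivalent to ``$D_C$ is injective and has closed range''.

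To conclude, I would only need the elementary remark that a positive operator $D_C$ on $\overline{\R(I-S_T)}$ is injective with closed range if and only if it is invertible (injectivity gives dense range, closedness then gives surjectivity), and that $D_C$ is invertible if and only if $D_C^2=I-C^*C\ge\delta I$ for some $\delta>0$, i.e. if and only if $\|C^*C\|=\|R^*R+Q^*Q\|<1$. Concatenating these equivalences yields (i) $\Leftrightarrow$ (ii). The only step that is not pure bookkeeping is the identity $V^*R=0$, which is what makes $D_T$ genuinely block-diagonal for the decomposition $\h=\n(I-S_T)\oplus\overline{\R(I-S_T)}$; once it is in hand, Lemma \ref{le312} and Corollary \ref{pr39} do the rest. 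It is worth emphasizing that the purity of $Q$ in (i) is not superfluous: the hypothesis that $\Delta(T)$ contains a partial isometry only forces $\R(D_C)$ to be closed, and it is exactly purity of $Q$ that supplies injectivity of $D_C$, upgrading ``closed range'' to ``invertible''.
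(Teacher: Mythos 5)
Your proof is correct and follows essentially the same route the paper intends: Lemma \ref{le312} converts purity of $Q$ into injectivity of the defect of the column operator $\begin{pmatrix} R & Q\end{pmatrix}^{\rm tr}$, Corollary \ref{pr39} converts the partial-isometry condition into closedness of its range, and the identity $D_T^2=0\oplus D_C^2$ (via $V^*R=0$) combines the two into invertibility of $D_C$, i.e.\ $\|R^*R+Q^*Q\|<1$. The only cosmetic difference is that the paper first assumes purity and then identifies $\begin{pmatrix} R & Q\end{pmatrix}^{\rm tr}$ with the pure part of $T$, whereas you keep both conditions encoded in $D_C$ throughout, which makes the two implications symmetric.
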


Clearly, the above condition (ii) is stronger than the combination of $\|R\|<1$ and $\|Q\|<1$. Also, (i) does not imply (ii) without the requirement that $Q$ is pure (that is $\n(D_{T^*}) \subset \n(D_T)$).

We record finally another special case which complements Theorem \ref{te210} and Corollary \ref{pr39}. Recall that $T$ is quasi-normal if $TT^*T=T^*T^2$.

\begin{proposition}\label{pr311}
The following are equivalent for a quasi-normal contraction $T$:
\begin{itemize}
\item[(i)] $T$ belongs to the Harnack part of a partial isometry;

\item[(ii)] The $C_{00}$-part of $T$ is a strict contraction on $\n(S_T)$;

\item[(iii)] An iterate $T^{n_0}$ with $n_0>1$ belongs to the Harnack part of a partial isometry;

\item[(iv)] The sequence $\{T^{*n}T^n\}$ uniformly converges in $\B$ to $S_T=P_{\n(D_T)}$.
\end{itemize}

If $T$ satisfies one of these equivalent assertions, then $T$ belongs to the Harnack part of the operator $V \oplus 0$ acting on $\h= \n(I-S_T) \oplus \n(S_T)$, where $V=T|_{\n(I-S_T)}$.
\end{proposition}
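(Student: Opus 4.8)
The plan is to reduce all of (i)--(iv) to the single numerical condition $\|Q\|<1$, where $Q$ is the pure part of the quasi-normal contraction $T$, and then to read off the last assertion from Theorem~\ref{pr38}.

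The first step is to unwind the structure of a quasi-normal contraction. Since $T$ commutes with $T^{*}T$, it commutes with $T^{*n}T^{n}=(T^{*}T)^{n}$ for every $n$, so $S_{T}$ is the strong limit of $(T^{*}T)^{n}$ and equals the orthogonal projection $P_{\n(D_{T})}$; consequently $\n(I-S_{T})=\n(D_{T})$ and $\n(S_{T})=\mathcal{D}_{T}$. Moreover $T$ commutes with $D_{T}=(I-T^{*}T)^{1/2}$, so $T\mathcal{D}_{T}\subseteq\mathcal{D}_{T}$, and since $\n(D_{T})=\mathcal{D}_{T}^{\perp}$ is also $T$-invariant the decomposition $\h=\n(D_{T})\oplus\mathcal{D}_{T}$ reduces $T$: I would write $T=V\oplus Q$ with $V=T|_{\n(D_{T})}=T|_{\n(I-S_{T})}$ an isometry (hence of class $C_{1\cdot}$) and $Q=T|_{\mathcal{D}_{T}}$ a completely non-isometric quasi-normal contraction with $D_{Q}$ injective. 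Such a $Q$ is of class $C_{00}$ (a standard structural fact; see \cite{K}), so $Q$ is exactly the $C_{00}$-part of $T$ on $\n(S_{T})$ and condition (ii) is precisely the statement $\|Q\|<1$.

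Next I would check the three remaining equivalences with (ii). For (i) $\Leftrightarrow$ (ii): by Corollary~\ref{pr39}, (i) holds iff $\R(D_{T})$ is closed; as $D_{T}=0\oplus D_{Q}$ with $D_{Q}$ injective, this is equivalent to $D_{Q}$ being bounded below, i.e. to $\|Q\|<1$. For (iv) $\Leftrightarrow$ (ii): $T^{*n}T^{n}=I_{\n(D_{T})}\oplus(Q^{*}Q)^{n}$ converges in operator norm to $I_{\n(D_{T})}\oplus 0=P_{\n(D_{T})}=S_{T}$ iff $\|(Q^{*}Q)^{n}\|=\|Q\|^{2n}\to 0$, iff $\|Q\|<1$. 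For (iii) $\Leftrightarrow$ (ii): each power $T^{n_{0}}$ is again quasi-normal, with $\n(D_{T^{n_{0}}})=\n(I-(T^{*}T)^{n_{0}})=\n(D_{T})$, so $T^{n_{0}}=V^{n_{0}}\oplus Q^{n_{0}}$ in the same reducing decomposition and its $C_{00}$-part is $Q^{n_{0}}$; applying the already proved equivalence (i) $\Leftrightarrow$ (ii) to $T^{n_{0}}$ shows that (iii), for a given $n_{0}>1$, says $\|Q^{n_{0}}\|<1$. Here quasi-normality is used decisively: $Q^{n_{0}}$ is quasi-normal, hence $\|Q^{n_{0}}\|^{2}=\|(Q^{*}Q)^{n_{0}}\|=\|Q\|^{2n_{0}}$, whence $\|Q^{n_{0}}\|<1\Leftrightarrow\|Q\|<1$. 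I expect this last point to be the only real obstacle: for a general contraction the implication (iii) $\Rightarrow$ (ii) is false (a nonzero nilpotent contraction is a counterexample), and it survives only because of the norm identity $\|Q^{n_{0}}\|=\|Q\|^{n_{0}}$ for quasi-normal $Q$.

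Finally, for the last assertion I would assume (ii), so $T=V\oplus Q$ with $\|Q\|<1$ on $\h=\n(I-S_{T})\oplus\n(S_{T})$. Then $W:=V\oplus 0$ is a partial isometry (since $V^{*}V=I$ one has $WW^{*}W=W$); it agrees with $T$ on $\n(I-S_{T})$ and its pure part is $0$, which in $T$ is replaced by the strict contraction $Q$. By the description of the Shmul'yan part of a partial isometry recalled before Theorem~\ref{pr38} (equivalently, by Proposition~\ref{pr35nou}), $T$ is Shmul'yan equivalent to $W$, and Theorem~\ref{pr38} identifies the Shmul'yan and Harnack parts of the partial isometry $W$; hence $T\in\Delta(W)=\Delta(V\oplus 0)$.
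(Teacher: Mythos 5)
Your proposal is correct and follows essentially the same route as the paper: reduce everything to $\|Q\|<1$ for the pure ($C_{00}$) part $Q$ in the reducing decomposition $T=V\oplus Q$ coming from $S_T=P_{\n(D_T)}$, pass through Corollary \ref{pr39}, and settle the final claim via Proposition \ref{pr35nou} together with Theorem \ref{pr38}. The only cosmetic difference is in (iii)$\Rightarrow$(ii), where you use the norm identity $\|Q^{n_0}\|=\|Q\|^{n_0}$ for quasi-normal $Q$ while the paper argues $\|Q^{n_0}\|<1\Rightarrow\sigma(Q)\subset\De\Rightarrow\|Q\|<1$; both rest on the same spectral/normaloid property of quasi-normal operators.
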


\begin{proof}
The equivalence $(i)\Leftrightarrow (iv)$ is immediate using Corollary \ref{pr39} (iii), since $TT^*T=T^*T^2$, which gives $(T^*T)^n=T^{*n}T^n$ for $n\ge 1$. We also obtain that $S_T=P_{\n(D_T)}$, by Corollary \ref{pr39} (iii) and the fact that $S_T$ is defined as the strong limit of $\{T^{*n}T^n\}$.

We write now $T=V \oplus Q$ with respect to the decomposition $\h=\n(I-S_T) \oplus \n(S_T)$, where $V$ is an isometry and $Q$ is of class $C_{0 \cdot}$. This orthogonal decomposition of $\h$ holds always because $S_T$ like $S_{T^*}$ are orthogonal projections when $T$ is quasi-normal (see for instance \cite{K, LSExtr}). In addition, $\n(I-S_{T^*}) \subset \h$ is the unitary part of $T$. This implies that $\n(S_T) \subset \n(S_{T^*})$. As both subspaces reduce $T$ it follows that $Q$ is of class $C_{00}$. Notice also that $Q$ is a quasi-normal operator. Thus, the convergence (iv) means $\|(Q^*Q)^n\|\to 0$, that is $\sigma(Q^*Q)\subset \De$, or equivalently $\|Q\|<1$. Hence the assertions (ii) and (iv) are equivalent.

Condition (ii) implies (iii) for every integer $n_0>1$. Conversely, if (iii) holds for some $n_0>1$ then by applying the above equivalence (i) $\Leftrightarrow$ (ii) to $T^{n_0}$ one has $\|Q^{n_0}\|<1$. This implies $\sigma (Q) \subset \De$, and finally $\|Q\|<1$. Consequently (ii) and (iii) are equivalent.
We have proved that all assertions (i)-(iv) are equivalent. In addition, using Proposition \ref{pr35nou}, we remark that $T=V\oplus Q$ is Shmul'yan (so Harnack) equivalent to $W=V\oplus 0$ when $\|Q\|<1$ (as in (ii)). Thus, in this case, $T$ belongs to the Harnack part of a partial (quasi-normal) isometry $W$. This ends the proof.
\end{proof}

Concerning the statement (iv) in Proposition \ref{pr311} we know that the sequence $\{T^{*n}T^n\}$ always converges strongly to $S_T$, so the meaning of Proposition \ref{pr311}, (iv), is that the convergence is even uniform whenever $T$ belongs to the Harncack part of a partial isometry.

\medskip

\section{Connections with the Shmul'yan-ter Horst domination of Schur class functions}\label{sec:4}
\medskip

The next topic is the Shmul'yan-ter Horst domination by partial isometries. The first result is the following

\begin{theorem}\label{te44}
 A partial isometry $T$ acting between two separable Hilbert spaces $\mathcal{E}$ and $\e'$, viewed as a constant function in $H_1^{\infty} (\mathcal{E})$, Shmul'yan-ter Horst dominates any function $G \in H_1^{\infty} (\mathcal{E}, \e')$ with $G(\De) \subset \Delta_{{\rm Sh}} (T)$.
\end{theorem}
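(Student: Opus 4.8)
The assertion is that $G\PrecH T$, i.e.\ $T_G\PrecS T_T$ as contractions in $\mathcal{B}_1(H^2(\mathcal{E}),H^2(\e'))$. I would prove this via criterion (iii) of Theorem~\ref{thm:1.2} with $A=T_T$ and $B=T_G$: it suffices to produce $r>0$ such that $(1-\varepsilon)T_T+\varepsilon T_G$ is a contraction for every $\varepsilon\in\C$ with $|\varepsilon|\le r$. Since $F\mapsto T_F$ is linear and $T_F$ is a contraction whenever $F\in H_1^\infty(\mathcal{E},\e')$, and the constant symbol $T$ gives $T_T$, one has $(1-\varepsilon)T_T+\varepsilon T_G=T_{H_\varepsilon}$ with $H_\varepsilon:=(1-\varepsilon)T+\varepsilon G$. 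So the whole problem reduces to checking that $H_\varepsilon\in H_1^\infty(\mathcal{E},\e')$, that is $\sup_{\lambda\in\De}\|H_\varepsilon(\lambda)\|\le 1$, for $|\varepsilon|$ in a neighbourhood of $0$.

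The point is then to plug in the normal form of the Shmul'yan part of a partial isometry. Writing $T=U\oplus 0$ with $U$ unitary with respect to $\mathcal{E}=\R(T^*)\oplus\n(T)$ and $\e'=\R(T)\oplus\n(T^*)$, Theorem~\ref{pr38} and the description of $\Delta_{{\rm Sh}}(T)$ recalled just before Proposition~\ref{pr35nou} (going back to \cite[Theorem~3]{Sh}) say that every contraction in $\Delta_{{\rm Sh}}(T)$ is block-diagonal of the form $U\oplus Z$ with $Z\in\mathcal{B}(\n(T),\n(T^*))$ and $\|Z\|<1$. Applying this pointwise to the hypothesis $G(\De)\subset\Delta_{{\rm Sh}}(T)$ gives $G(\lambda)=U\oplus Z(\lambda)$ with $\|Z(\lambda)\|\le\|G(\lambda)\|\le 1$ for every $\lambda\in\De$. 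The unitary corner is then unchanged by the convex combination and only the pure part gets rescaled,
\[
H_\varepsilon(\lambda)=(1-\varepsilon)(U\oplus 0)+\varepsilon(U\oplus Z(\lambda))=U\oplus\varepsilon Z(\lambda),
\]
so $\|H_\varepsilon(\lambda)\|=\max\bigl(1,\,|\varepsilon|\,\|Z(\lambda)\|\bigr)\le 1$ for all $\lambda\in\De$ as soon as $|\varepsilon|\le 1$. Hence $H_\varepsilon\in H_1^\infty(\mathcal{E},\e')$ for every $|\varepsilon|\le 1$, and Theorem~\ref{thm:1.2}(iii) (with $r=1$) yields $T_G\PrecS T_T$, which is exactly the claim that $T$ Shmul'yan-ter Horst dominates $G$.

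I do not anticipate a genuine difficulty here; the only step that must be handled with some care is the block-diagonal form $G(\lambda)=U\oplus Z(\lambda)$, which rests on the fact that any contraction Shmul'yan equivalent to the partial isometry $T$ has the same defect spaces, $\n(D_{G(\lambda)})=\n(D_T)=\R(T^*)$, $\mathcal{D}_{G(\lambda)}=\mathcal{D}_T=\n(T)$ (and the adjoint versions), and coincides with $T$ on $\n(D_T)$, which makes both off-diagonal corners of $G(\lambda)$ vanish --- precisely the ingredients in the proof of Theorem~\ref{pr38}. Alternatively, instead of invoking Theorem~\ref{thm:1.2}(iii) one may exhibit the Shmul'yan intertwiner directly: $\lambda\mapsto Z(\lambda)=P_{\n(T^*)}G(\lambda)|_{\n(T)}$ is an analytic $\mathcal{B}(\n(T),\n(T^*))$-valued function with $\sup_{\lambda}\|Z(\lambda)\|\le 1$, so its Toeplitz operator $T_Z\colon H^2(\n(T))\to H^2(\n(T^*))$ is a contraction; since $D_{T_T}=P_{H^2(\n(T))}$ and $D_{T_T^*}=P_{H^2(\n(T^*))}$, one checks that $T_G-T_T=D_{T_T^*}\,T_Z\,D_{T_T}$, i.e.\ $T_G=T_T+D_{T_T^*}XD_{T_T}$ with $X=T_Z$, which is condition (i) of Theorem~\ref{thm:1.2}.
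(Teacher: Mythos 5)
Your argument is correct, but it takes a genuinely different route from the paper's. You reduce the claim $T_G\PrecS T_T$ to the convex-combination criterion of Theorem~\ref{thm:1.2}(iii), using the linearity of $F\mapsto T_F$, the bound $\|T_F\|\le\|F\|_{\infty}$, and the block-diagonal normal form $G(\lambda)=U\oplus Z(\lambda)$ of elements of $\Delta_{{\rm Sh}}(T)$ --- which indeed holds with respect to the \emph{fixed} decompositions $\e=\R(T^*)\oplus\n(T)$, $\e'=\R(T)\oplus\n(T^*)$, because $D_T$ and $D_{T^*}$ are the orthogonal projections onto $\n(T)$ and $\n(T^*)$, so $G(\lambda)-T=D_{T^*}XD_T$ kills both off-diagonal corners and fixes the unitary corner. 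The identity $(1-\varepsilon)T+\varepsilon G(\lambda)=U\oplus\varepsilon Z(\lambda)$ then gives contractivity of $T_{H_\varepsilon}$ for all $|\varepsilon|\le 1$, i.e.\ the explicit radius $r=1$. The paper argues instead through ter Horst's structural characterization of the pre-order: it uses Theorem~\ref{te31nou} together with the Foias--Frazho parametrization to write each value as $G(\lambda)=T+D_{T^*}Q_{\lambda}D_T$ with $\|Q_{\lambda}\|\le 1$, verifies that $Q_{\lambda}$ is uniquely determined so that $\lambda\mapsto Q_{\lambda}$ is a well-defined bounded family, and then invokes \cite[Theorem 0.2]{H}. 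Your second variant --- exhibiting the intertwiner $X=T_Z$ with $Z(\lambda)=P_{\n(T^*)}G(\lambda)|_{\n(T)}$ and checking $T_G=T_T+D_{T_T^*}T_Z D_{T_T}$ via $D_{T_T}=P_{H^2(\n(T))}$, $D_{T_T^*}=P_{H^2(\n(T^*))}$ --- in effect reconstructs the paper's family $\{Q_{\lambda}\}$ explicitly (and shows it is analytic, not merely bounded). What your approach buys is self-containedness: no appeal to the results of \cite{H} is needed, only Theorem~\ref{thm:1.2} and the description of $\Delta_{{\rm Sh}}(T)$ already recorded in Section~\ref{sec:3}. What the paper's approach buys is that the same scheme (pointwise representation plus boundedness and uniqueness of the coefficient family) is exactly the one reused in the proof of Theorem~\ref{te45}.
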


\begin{proof}
Theorem \ref{te31nou} for the partial isometry $T$ and $T'\in \Delta_{{\rm Sh}}(T)$ implies the existence of $F \in H_1^{\infty}(\mathcal{E},\e')$, $F(\De) \subset \Delta_{{\rm Sh}}(T)$, with $F(0)=T$ and $F(z_0)=T'$ for some $z_0\in \De$. In addition, the function $F$ is given by the formula
$$
F(\lambda )=T+ \lambda D_{T^*}F_1(\lambda )[I+ \lambda T^* F_1(\lambda )]^{-1}D_T, \quad \lambda \in \De,
$$
where $F_1\in H_1^{\infty}(\D_T, \D_{T^*})$ (see \cite[Ch. XIII, 3, Example 3.7]{FF}). Since $T$ is a partial isometry one has $\D_{T^*}=\n(T^*)$, so $T^*F_1(\lambda )=0$ for $\lambda \in \De$. Therefore
\begin{equation}\label{ec42}
F(\lambda )=T+\lambda D_{T^*}F_1(\lambda )D_T, \quad \lambda \in \De,
\end{equation}
with $\sup_{\lambda \in \De} \|\lambda F_1(\lambda )\|\le 1$. Using \cite[Theorem 2.6]{H},
this implies that $F$ is dominated by the constant function $T$ in $H_1^{\infty} (\mathcal{E},\e')$.

Let now $G\in H_1^{\infty} (\mathcal{E},\e')$ be an arbitrary function with its range $G(\De)$ contained in $\Delta_{{\rm Sh}} (T)$. Let $\lambda \in \De$. Then by the previous conclusion applied to the  contractions $T$ and $G(\lambda )$ there exist $z_{\lambda }\in \De$ and $K_{\lambda }\in H_1^{\infty} (\mathcal{E},\e')$ such that $K_{\lambda }(0)=T$ and $K_{\lambda} (z_{\lambda})=G(\lambda )$. As above, there is a function $F_{\lambda} \in H_1^{\infty} (\D_T, \D_{T^*})$ with
$$
K_{\lambda }(z) =T+ D_{T^*}F_{\lambda }(z)D_T, \quad z\in \De.
$$
For $z=z_{\lambda }$ this yields
\begin{equation}\label{ec43}
G(\lambda )=T+D_{T^*}F_{\lambda }(z_{\lambda })D_T=T+D_{T^*}Q_{\lambda }D_T, \quad \lambda \in \De .
\end{equation}
Here $Q_{\lambda }:=F_{\lambda }(z_{\lambda })\in \mathcal{B}(\D_T, \D_{T^*})$ satisfy $\|Q_{\lambda }\|\le 1$ for $\lambda \in \De$.

Notice that the contraction $Q_{\lambda }$ is uniquely determined by $\lambda $, $G(\lambda )$, $z_{\lambda }$ and $K_{\lambda }\in H_1^{\infty} (\mathcal{E},\e')$ as above. Indeed, assume that for $\lambda \in \De$ there are $z_{\lambda }' \in \De$ and $K_{\lambda }' \in H_1^{\infty}(\e,\e')$ such that $K_{\lambda }'(0)=T$ and $K_{\lambda }'(z_{\lambda }')=G(\lambda )$. Then $K_{\lambda }'$ has the form of $F$ in \eqref{ec42}, hence there exists $F_{\lambda }'\in H_1^{\infty}(\D_T, \D_{T^*})$ such that
$$
K_{\lambda }'(z)=T+D_{T^*}F_{\lambda }'(z)D_T, \quad z\in \De.
$$
This implies, for $z=z_{\lambda }'$, the identity
$$
G(\lambda )=T+D_{T^*}F_{\lambda }'(z_{\lambda }')D_T, \quad \lambda \in \De.
$$
From this expression and that in \eqref{ec43} for $G$ we infer
$$
D_{T^*}(Q_{\lambda}-F_{\lambda }'(z_{\lambda }'))D_T=0, \quad \lambda \in \De,
$$
that is $F_{\lambda }'(z_{\lambda }')=Q_{\lambda }=F_{\lambda }(z_{\lambda })$, $\lambda \in \De$. Hence the mapping $\widetilde{Q}$ from $\De$ into $\mathcal{B} (\D_T, \D_{T^*})$ defined by the formula
$$
\widetilde{Q}(\lambda )=Q_{\lambda }, \quad \lambda \in \De,
$$
is well-defined and bounded on $\De$. Then by \cite[Theorem 0.2]{H} it follows that the function $G$ satisfying the relation \eqref{ec43} is Shmul'yan-ter Horst dominated by the constant function $T$ in $H_1^{\infty}(\mathcal{E},\e')$. This ends the proof.
\end{proof}

\begin{remark}\label{rm:pi}
\rm
Concerning the assertion $G \PrecH T$ of Theorem \ref{te44}, we have the following example. Let $\mathcal{E}=\C$ and let $T = 0_{\C}$ be the null operator whose Harnack part is, according to \cite{F}, the set of all strict contractions. The identity function $u(\lambda )=\lambda $ ($\lambda \in \De$) satisfies $u(\De)\subset \De$ and $\|u\|_{\infty}=1$, but only functions $f\in H^{\infty}$ with $\|f\|_{\infty}<1$ Shmul'yan-ter Horst dominate $0_{\C}$.
Hence the functions $u$ and $0_{\C}$ are not Shmul'yan-ter Horst equivalent. This shows that the implication given by the assertion of Theorem \ref{te44} cannot be upgraded to an equivalence.
\end{remark}

Consider now the equivalence class $\Delta _{\infty}(T)$ in the sense of Shmul'yan-ter Horst of a partial isometry $T$, viewed as a constant function in $H_1^{\infty}(\mathcal{E},\e')$. The following result provides a description of $\Delta _{\infty}(T)$ which is similar to that of $\Delta_{{\rm Sh}}(T)=\Delta(T)$.

\begin{theorem}\label{te45}
If $T$ is a partial isometry acting between two separable Hilbert spaces $\mathcal{E}$ and $\e'$, then
\begin{equation}\label{ec44}
\Delta _{\infty}(T) =\{F(\cdot)=T+D_{T^*}F_0(\cdot)D_T\in H_1^{\infty} (\mathcal{E},\e'):F_0 \in H_1^{\infty} (\D_T, \D_{T^*}), \hspace*{1mm} \|F_0\|_{\infty} <1\}.
\end{equation}
\end{theorem}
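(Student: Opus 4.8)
The plan is to prove the set equality in \eqref{ec44} by double inclusion, using Theorem \ref{te44}, the characterization of $\Delta_{{\rm Sh}}(T)$ for a partial isometry (Theorem \ref{pr38} together with the description of the Shmul'yan part given before Theorem \ref{pr38}), and ter~Horst's criteria \cite[Theorem 0.2, Theorem 2.6]{H}. Throughout I will use that $T$ a partial isometry gives $\D_{T^*} = \n(T^*)$, so $T^*F_0(\lambda) = 0$ for every $F_0 \in H_1^\infty(\D_T,\D_{T^*})$; this is what makes the Potapov-type factorization collapse to the simple affine form $F(\lambda) = T + D_{T^*}F_0(\lambda)D_T$.

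For the inclusion ``$\supseteq$'', let $F_0 \in H_1^\infty(\D_T,\D_{T^*})$ with $\|F_0\|_\infty < 1$ and set $F(\lambda) = T + D_{T^*}F_0(\lambda)D_T$. First I would check $F \in H_1^\infty(\mathcal{E},\e')$: analyticity is clear, and for contractivity one writes, for fixed $\lambda$, $F(\lambda) = T + D_{T^*}F_0(\lambda)D_T$ and uses that $T$ is a partial isometry (so $T = U \oplus 0$ on $\n(D_T)\oplus \D_T$ into $\n(D_{T^*})\oplus\D_{T^*}$) to see that $F(\lambda) = U \oplus F_0(\lambda)$ with respect to these decompositions; since $\|F_0(\lambda)\| \le \|F_0\|_\infty < 1$, $F(\lambda)$ is a contraction — in fact $F(\lambda) \in \Delta_{{\rm Sh}}(T)$ by Proposition \ref{pr35nou} (or the remarks preceding Theorem \ref{pr38}), because $F(\lambda)$ is $T$ perturbed in its pure part by a strict contraction. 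Thus $F(\De) \subset \Delta_{{\rm Sh}}(T)$, and Theorem \ref{te44} gives $F \PrecH T$. For the reverse domination $T \PrecH F$, I would observe that $T = F(\lambda) + D_{F(\lambda)^*}(\text{something})D_{F(\lambda)}$ pointwise since $F(\lambda)$ and $T$ are Shmul'yan equivalent with uniformly bounded intertwiner (the intertwiner is essentially $-F_0(\lambda)(I-F_0(\lambda)^*F_0(\lambda))^{-1/2}$-type expression, bounded uniformly because $\|F_0\|_\infty<1$); packaging this bounded $\De$-family via \cite[Theorem 0.2]{H} yields $T \PrecH F$, hence $F \in \Delta_\infty(T)$.

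For the inclusion ``$\subseteq$'', let $F \in \Delta_\infty(T)$, so in particular $F \PrecH T$, i.e. $T_F \PrecS T_T$ in $\mathcal{B}_1(H^2(\mathcal{E}),H^2(\e'))$; restricting to constants (evaluating at each point, or using that Shmul'yan domination of Toeplitz operators forces pointwise Shmul'yan domination — this is in \cite{H}) gives $F(\lambda) \PrecS T$ for every $\lambda$, hence $F(\De) \subset \Delta_{{\rm Sh}}(T)$. By the description of $\Delta_{{\rm Sh}}(T)$ this means $F(\lambda) = T + D_{T^*}Q_\lambda D_T$ for a uniquely determined contraction $Q_\lambda \in \mathcal{B}(\D_T,\D_{T^*})$ (uniqueness because $D_{T^*}$ is injective on $\D_{T^*}$ and $D_T$ has dense range in $\D_T$), and the assignment $F_0(\lambda) := Q_\lambda$ is analytic (from analyticity of $F$ and the fact that $F_0(\lambda) = P_{\D_{T^*}}F(\lambda)|_{\D_T}$ in the block form $F(\lambda)=U\oplus F_0(\lambda)$) and bounded by $1$, so $F_0 \in H_1^\infty(\D_T,\D_{T^*})$ and $F = T + D_{T^*}F_0(\cdot)D_T$. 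It remains to upgrade $\|F_0\|_\infty \le 1$ to the strict inequality $\|F_0\|_\infty < 1$; this is where I expect the main obstacle. The point is that $\Delta_\infty(T)$ requires also $T \PrecH F$, and if $\|F_0\|_\infty = 1$ then $T_F$ fails to Shmul'yan-dominate $T_T$ — intuitively for the same reason the function $u(\lambda)=\lambda$ does not Shmul'yan-ter Horst dominate $0_\C$ (Remark \ref{rm:pi}). Concretely, $T \PrecH F$ means $I - T_T^* T_F = D_{T_F} \mathcal{Y} D_{T_F}$ for some bounded $\mathcal{Y}$; computing $I - T_T^*T_F$ on the diagonal block using $F = U \oplus F_0$ gives the Toeplitz operator with symbol $I - F_0(\lambda)^*F_0(\lambda)$ sandwiched appropriately, and a bounded solution $\mathcal{Y}$ exists iff $I - T_{F_0}^*T_{F_0}$ has closed range bounded below, which forces $\operatorname{dist}(F_0(\lambda), \partial \De\text{-boundary of the ball})$ to be bounded away from $0$, i.e. $\|F_0\|_\infty < 1$. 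I would make this precise by invoking \cite[Theorem 0.2]{H} in the contrapositive, or by a direct argument: if $\|F_0\|_\infty = 1$ there is a sequence $\lambda_n$ and unit vectors $e_n \in \D_T$ with $\|F_0(\lambda_n)e_n\| \to 1$, hence $\|D_{F(\lambda_n)}(0\oplus e_n)\| \to 0$ while $\|(T - F(\lambda_n))(0\oplus e_n)\| = \|D_{T^*}F_0(\lambda_n)e_n\|$ does not go to $0$, contradicting the boundedness of the Shmul'yan intertwiner required for $T \PrecS F(\lambda_n)$ uniformly in $n$ — and uniformity is exactly what $T \PrecH F$ provides. Assembling these two inclusions completes the proof.
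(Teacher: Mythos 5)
Your proof is correct, and its skeleton (double inclusion, the representation $F(\cdot)=T+D_{T^*}F_0(\cdot)D_T$ extracted from ter~Horst's Theorem~2.6, the identification $F_0(\lambda)=P_{\D_{T^*}}F(\lambda)|_{\D_T}$ which makes $F_0$ analytic) matches the paper's. Where you genuinely diverge is in the two delicate steps. For sufficiency, the paper passes to the auxiliary function $F_1(\lambda)=0\oplus F_0(\lambda)$, shows $F_1$ is Shmul'yan--ter~Horst equivalent to the null function, and invokes ter~Horst's Corollary~2.7(ii) (the Schur-class analogue of Foias's theorem, characterizing $\Delta_\infty(0)$ as $\{\|\cdot\|_\infty<1\}$); you instead combine Theorem~\ref{te44} with an explicit uniformly bounded intertwiner $-D_{F_0(\lambda)^*}^{-1}F_0(\lambda)D_{F_0(\lambda)}^{-1}$ fed into \cite[Theorem 0.2]{H}. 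For the key strictness claim $\|F_0\|_\infty<1$, the paper again transfers the mutual domination to the pair $(F_1,0)$ and quotes Corollary~2.7(ii), whereas you give a direct quantitative contradiction: if $\|F_0(\lambda_n)e_n\|\to 1$ then $\|D_{F(\lambda_n)}(0\oplus e_n)\|\to 0$ while $\|(T-F(\lambda_n))(0\oplus e_n)\|\to 1$, which is incompatible with the uniformly norm-bounded pointwise intertwiner that $T\PrecH F$ supplies via Theorem~2.6. Your direct argument is rigorous and avoids Corollary~2.7(ii) entirely, at the cost of leaning a bit harder on the ``uniformly bounded pointwise representation'' form of Theorem~2.6 (which the paper also uses, so this is legitimate); the intermediate musing about closed range of $I-T_{F_0}^*T_{F_0}$ is dispensable and should be dropped in favor of the sequence argument you actually carry out. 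Both routes are valid; the paper's is shorter once Corollary~2.7(ii) is available, yours is more self-contained and makes the mechanism of the strict inequality transparent.
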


\begin{proof}
Let $F\in H_1^{\infty}(\mathcal{E},\e') \cap \Delta _{\infty} (T)$, $T$ being a partial isometry from $\mathcal{E}$ into $\e'$. Then by \cite[Theorem 2.6]{H} there exists a norm bounded function $F_0(\cdot)$ on $\De$ with values in $\mathcal{B}(\D_T,\D_{T^*})$ such that
$$
F(\lambda )=T+D_{T^*}F_0(\lambda )D_T, \quad \lambda \in \De.
$$
Since $T$ is a partial isometry (so $\D_{T^*} =\n(T^*)$), we have
$$
F_0(\lambda )=P_*F(\lambda )|_{\D_T}, \quad \lambda \in \De,
$$
where $P_*$ is the projection of $\mathcal{E}'$ onto $\D_{T^*}$. In particular, $F_0 \in H_1^{\infty} (\D_T, \D_{T^*})$.

Now, identifying canonically $\D_T \sim \{0\}\oplus \D_T$ and $\D_{T^*}\sim \{0\} \oplus \D_{T^*}$ as subspaces of $\mathcal{E} =\n(D_T) \oplus \D_T$ and $\mathcal{E}' =\n(D_{T^*})\oplus \D_{T^*}$ respectively, one can consider the function $F_1 \in H_1^{\infty}(\mathcal{E},\e')$ having the representation
\begin{equation}\label{eq:f1}
F_1(\lambda )=0\oplus F_0(\lambda )
\end{equation}
from $\n(D_T) \oplus \D_T$ into $\n(D_{T^*})\oplus \D_{T^*}$, for $\lambda \in \De$. One has $F_1 \PrecH 0$ in $H_1^{\infty}(\mathcal{E},\e')$ (see \cite[Theorem 2.6, or Corollary 2.7 (ii)]{H}).

On the other hand, as $T \PrecH F$ in $H_1^{\infty}(\mathcal{E},\e')$ we have, as above,
$$
T=F(\lambda )+D_{F(\lambda )^*}F_0'(\lambda) D_{F(\lambda )} \quad (\lambda \in \De)
$$
for some $\mathcal{B}(\D_T,\D_{T^*})$-valued norm bounded function $F_0'$ on $\De$. We have used here that $\D_{F(\lambda )}=\D_T$ and $\D_{F(\lambda )^*}=\D_{T^*}$. Indeed, by \cite[Corollary 2.3]{H} we have $F(\lambda) \in \Delta_{{\rm Sh}}(T)$ and $F(\lambda)^* \in \Delta_{{\rm Sh}}(T^*)$, for any $\lambda \in \De$. Therefore, from the two above relations we get
\begin{eqnarray*}
F_0(\lambda ) &=& P_*(F(\lambda )-T)|_{\D_T}=- P_* D_{F(\lambda )^*} F_0'(\lambda )D_{F(\lambda )}|_{\D_T}\\
&=& -D_{F_0(\lambda )^*} F_0'(\lambda ) D_{F_0(\lambda )},
\end{eqnarray*}
for $\lambda \in \De$. We obtain
$0=F_0(\lambda )+D_{F_0(\lambda )^*}F_0'(\lambda )D_{F_0(\lambda )}$
for $\lambda \in \De$. By the above canonical identifications of $\D_T$ and $\D_{T^*}$ this relation yields
$$
0=F_1(\lambda )+D_{F_1(\lambda )^*}F_0'(\lambda )D_{F_1(\lambda )}, \quad \lambda \in \De.
$$
Thus, using \cite[Theorem 2.6]{H}, one has $0 \PrecH F_1$ in $H_1^{\infty} (\mathcal{E},\e')$ and we finally obtain that $F_1$ is equivalent in the Shmul'yan-ter Horst sense with the null function in $H_1^{\infty} (\mathcal{E},\e')$. Then, using \cite[Corollary 2.7 (ii)]{H}, it follows that $\|F_1\|_{\infty} <1$, that is $\|F_0 \|_{\infty} <1$. Thus, a first inclusion between the two sets of \eqref{ec44} has been proved.

The proof of the converse inclusion in \eqref{ec44} is simpler. Indeed, let $F\in H_1^{\infty} (\mathcal{E},\e')$ of the form $F(\cdot)=T+D_{T^*}F_0(\cdot)D_T$, with a function $F_0 \in H_1^{\infty} (\D_T, \D_{T^*})$ such that $\|F_0 \|_{\infty} <1$. Then also $\|F_1\|_{\infty} <1$, $F_1$
being defined as in \eqref{eq:f1}. Using the same result in \cite{H} we have that $F_1$ is in the equivalence class of the null function in $H_1^{\infty} (\mathcal{E},\e')$. Since $F(\lambda)|_{\n(D_T)}=T|_{\n(D_T)}$, $\lambda \in \De$, the function $F$ belongs to the equivalence class of $T$ in $
H_1^{\infty}(\mathcal{E},\e')$. This ends the proof.
\end{proof}



\end{document}